\date{}
\newcommand{\norm}[1]{\left\Vert#1\right\Vert}
\newcommand{\abs}[1]{\left\vert#1\right\vert}
\newcommand{\set}[1]{\left\{#1\right\}}
\newtheorem{thm}{Theorem}[section]
\newtheorem{prop}[thm]{Proposition}
\newtheorem{lem}[thm]{Lemma}
\theoremstyle{definition}
\newtheorem{rem}[thm]{Remark}
\numberwithin{equation}{section}
\author[J. L. Torrea]{Jos\'e L. Torrea}
\address{Departamento de Matem\'aticas \\
          Facultad de Ciencias \\
          Universidad Au\-t\'o\-no\-ma de Madrid \\
          28049 Madrid, Spain\\and ICMAT-CSIC-UAM-UCM-UC3M}
\email{joseluis.torrea@uam.es}
\author[C. Zhang]{Chao Zhang}
\address{School of Mathematics and Statistics \\
          Wuhan University \\
          430072 Wuhan, China}
\address{\textit{Current address:}
          \vskip0.01cm Departamento de Matem\'aticas \\
          Facultad de Ciencias \\
          Universidad Au\-t\'o\-no\-ma de Madrid \\
          28049 Madrid, Spain}
\email{zaoyangzhangchao@163.com}
\thanks{Research supported by Ministerio de Ciencia e Innovaci\'{o}n de Espa\~{n}a
MTM2008-06621-C02-01 and the National Natural Science Foundation of
China No.11071190}
\keywords{fractional derivative, Littlewood--Paley theory,
semigroups, Lusin cotype or type, vector-valued
Calder\'{o}n--Zygmund operators}
\subjclass[2000]{46B20, 42B25, 42A61}
\begin{document}

\title{FRACTIONAL VECTOR-VALUED LITTLEWOOD--PALEY--STEIN THEORY FOR SEMIGROUPS}

\begin{abstract}
We consider the fractional derivative of a  general Poisson
semigroup.  With this fractional derivative, we define the
generalized fractional Littlewood--Paley $g$-function for semigroups
acting on $L^p$-spaces of functions with values in Banach spaces. We
give a characterization of  the classes of Banach spaces for which
the fractional Litlewood--Paley $g$-function is bounded on
$L^p$-spaces. We show that the class of Banach spaces is independent
of the order of derivation and coincides with the classical (Lusin
type/cotype) case. It is also shown that the same kind of results
exist for the case of the fractional area function and the
fractional $g^*_\lambda$-function on $\mathbb{R}^n$.

At last, we consider the relationship of the almost sure finiteness
of the fractional Littlewood--Paley $g$-function, area function, and
$g^*_\lambda$-function  with  the  Lusin cotype property of the
underlying Banach space. As a byproduct of the techniques developed,
one can get some results of independent interest for vector-valued
Calder\'on--Zygmund operators. For example, one can get the
following characterization, a Banach space  $\mathbb{B}$ is UMD if
and only if  for some (or, equivalently, for every) $p\in
[1,\infty)$, $\displaystyle \lim_{\varepsilon \rightarrow 0^+ }
\int_{|x-y|> \varepsilon} \frac{f(y)}{x-y}dy $ exists \textup{a.e.}
$x\in \mathbb{R}$ for every
 $f\in L^p_\mathbb{B}(\mathbb{R}).$ \end{abstract}

\maketitle

\section{Introduction}

In the last  decade, a lot of attention has been devoted to the
study of  fractional laplacians, see \cite{cafa, stinga} and the
references therein.  On the other hand, several concepts of
fractional derivatives have been developed in the literature since
19th century. Depending on the motivation  of the researchers, these
two objects can be different and  even unrelated. However, when
dealing with semigroups, it is clear that any definition of
fractional  derivative should have relation with the definition  of
fractional laplacian.   Roughly speaking, a fractional derivative
(with respect to $t$) of order ``$\alpha$'' of the Poisson
semigroup, $e^{-t\sqrt{\mathcal{L}}}$,  of a certain differential
operator $\mathcal{L}$, should be closely related to
$\mathcal{L}^{\alpha/2}e^{-t\sqrt{\mathcal{L}}}.$

 Segovia and Wheeden, see \cite{SeWh}, motivated by some characterization
of potential spaces on $\mathbb{R}^n,$ introduced the following
definition of ``fractional derivative" $\partial^\alpha.$ Given
$\alpha>0,$ let $m$ be the smallest integer which strictly exceeds
$\alpha.$ Let $f$ be a reasonable nice function in
$L_{\mathbb{B}}^p\big(\mathbb{R}^n \big).$ Then
\begin{equation*}
\partial_t^{\alpha}\mathcal{P}_tf(x)=\frac{e^{-i \pi(m-\alpha)}}{\mathbf{\Gamma}(m-\alpha)}\int_0^\infty
{\partial_t^m}\mathcal{P}_{t+s}(f)(x)s^{m-\alpha-1}ds, \qquad t>0,  x \in \mathbb{R}^n,
\end{equation*}
where $\mathbf{\Gamma}$ denotes the Gamma function and
$\mathcal{P}_t$ denotes the classical Poisson semigroup on
$\mathbb{R}^n$ . Observe that for reasonable good functions,
$\partial_t^{\alpha}\mathcal{P}_tf(x) = e^{i\pi\alpha}
(-\Delta)^{\alpha/2} \mathcal{P}_tf(x).$ In \cite{SeWh}, the authors
developed a satisfactory theory of euclidean square functions of
Littlewood--Paley type in which the usual derivatives are
substituted by these fractional derivatives.

It turns out that the notion of partial derivative considered by
Segovia and Wheeden can be used  in the case of general subordinated
Poisson semigroups defined on a measure space $(\Omega, d\mu),$ see
Section \ref{prepare}. Of course, without having a pointwise
expression but just an identity in $L^p(\Omega)$. This fractional
derivative has a nice behavior for iteration and for spectral
decomposition.  Then it is natural to ask whether  results already
known for classical derivatives are still true for the fractional
derivative case.  In this paper, we  shall be concerned with several
characterizations of Lusin type and Lusin cotype of Banach spaces by
the boundedness of square functions  defined by using the fractional
derivatives.  Now we explain briefly the concept of Lusin type and
Lusin cotype.

The martingale type and cotype properties of a Banach space
$\mathbb{B}$ were introduced in the 1970's by G. Pisier, see
\cite{Pis1,Pis2}, in connection with the convexity and smoothness of
the Banach space $\mathbb{B}$. If $M=(M_n)_{n \in \mathbb{N}}$ is a
martingale defined on some probability space and with values in
$\mathbb{B}$, the $q$-square function $S_qM$ is defined by
$\displaystyle S_q
M=\Big(\sum^{\infty}_{n=1}\|M_n-M_{n-1}\|^q_{\mathbb{B}}\Big)^{\frac{1}{q}}.
$ The Banach space $\mathbb{B}$ is said to be of martingale cotype
$q$, $2\leq q <\infty$, if  for every bounded
$L_{\mathbb{B}}^p$-martingale $M=(M_n)_{n\in \mathbb{N}}$ we have
$\displaystyle \left\|S_qM \right\|_{L^p} \leq C_p
\sup_n\|M_n\|_{L^p_{\mathbb{B}}}, $ for some $1<p<\infty$. The
Banach space $\mathbb{B}$ is said to be of martingale type $q$, $1 <
q\le 2,$ when  the reverse inequality holds
 for some $1<p<\infty$. The martingale type and cotype
properties do not depend on $1<p<\infty$ for which the corresponding
inequalities are satisfied. $\mathbb{B}$ is of martingale cotype $q$
if and only if its dual, $\mathbb{B}^*$,  is of martingale type $q'=
q/(q-1).$

It is a common fact that results in probability theory have
parallels in harmonic analysis. In this line of thought, Xu, see
\cite{Xu1}, defined  the Lusin cotype and Lusin type properties for
a Banach space $\mathbb{B}$ as follows.  Let  $f$ be a function in
$L^1(\mathbb{T},\mathbb{B})$, where $\mathbb{T}$ denotes the one
dimensional torus and $L^1(\mathbb{T},\mathbb{B})$ stands for the
Bochner--Lebesgue space of strong measurable $\mathbb{B}$-valued
functions such that the scalar function $\|f\|_{\mathbb{B}}$ is
integrable. Consider the generalized Littlewood--Paley $g$-function
$$
g_q(f)(z)= \left(\int_0^1 (1-r)^q\left\|{ \partial_rP_r} \ast
f(z)\right\|_{\mathbb{B}}^q\frac{dr}{1-r}\right)^{\frac{1}{q}},
$$
where $P_r(\theta)$ denotes  the Poisson kernel. It is said that
$\mathbb{B}$ is of Lusin cotype $q,$  $q \geq 2,$ if for some $1<p
<\infty$ we have $\displaystyle  \left\|g_q(f)
\right\|_{L^p(\mathbb{T})} \leq C_p
\|f\|_{L^p_{\mathbb{B}}(\mathbb{T})}, $ and $\mathbb{B}$ is of Lusin
type $q,$  $1 \leq q \leq 2,$ if for some $1<p<\infty$ we have
$\displaystyle \|f\|_{L^p_{\mathbb{B}}(\mathbb{T})} \leq C_p
\left(\| \hat{f}(0) \|_{\mathbb{B}} + \left\|g_q(f)
\right\|_{L^p(\mathbb{T})} \right). $

The Lusin cotype and Lusin type properties do not depend on $p \in
(1,\infty)$, see \cite{Xu1,OX}. Moreover, a Banach space
$\mathbb{B}$ is of Lusin cotype $q$ (Lusin type $q$) if and only if
$\mathbb{B}$ is of martingale cotype $q$ (martingale type $q$), see
\cite[Theorem 3.1]{Xu1}.

 Mart\'{\i}nez, Torrea and Xu, see \cite{MTX}, extended the results in
\cite{Xu1} to subordinated Poisson semigroup
$\set{\mathcal{P}_t}_{t\ge 0}$ of a general symmetric diffusion
Markovian semigroup $\{\mathcal{T}_t\}_{t\ge 0}$.  That is,  a
family of linear operators defined on $L^p(\Omega, d\mu)$ over a
measure space $(\Omega, d\mu)$ satisfying the semigroup properties
\begin{itemize}
\item $\mathcal
{T}_0=\text{Id}, \mathcal{T}_t\mathcal{T}_s=\mathcal{T}_{t+s}.$
\item $\left\|\mathcal{T}_t\right\|_{L^p\rightarrow L^p}\leq 1 \quad
\forall p\in [1, \infty],$
\item $
 \lim\limits_{t\rightarrow 0}\mathcal{T}_tf=f \quad
\text{in} \quad L^2\quad \forall f\in L^2,$
\item $
\mathcal{T}_t^*=\mathcal{T}_t \quad \text{on} \quad L^2 \ \textup{
and}$
\item $
 \mathcal{T}_t f\geq 0 \quad \text{if}\  f\geq 0, \quad
\mathcal{T}_t 1=1.$
\end{itemize}
The subordinated Poisson semigroup $\{\mathcal{P}_t\}_{t\geq 0}$ (again a symmetric diffusion semigroup, see \cite{Stein1}) is
defined as
\begin{equation}
\label{1.4} \mathcal{P}_tf =\frac{t}{2\sqrt\pi}\int_0^\infty \frac{~e^{-\frac{t^2}{4u}}}{
u^{\frac{3}{2}}}\mathcal{T}_{u}f du.
\end{equation}
 Being positive operators,  $\mathcal{T}_t$
and $\mathcal{P}_t$ have straightforward norm-preserving extensions to
$L_{\mathbb{B}}^p(\Omega)$ for every Banach space $\mathbb{B}$,
where $L_{\mathbb{B}}^p(\Omega)$ denotes the usual Bochner--Lebesgue
$L^p$-space of $\mathbb{B}$-valued functions defined on $\Omega.$
 Let $g_1^q$ be the generalized Littlewood--Paley $g$-function defined by
 \begin{equation*}\label{0.0}
g_1^q(f)(x)=\left(\int_0^\infty \left\|t
\partial_t \mathcal
{P}_tf(x)\right\|_\mathbb{B}^q\frac{dt}{t}\right)^{\frac{1}{q}},\qquad
\forall f\in \bigcup\limits_{1\leq p\leq
\infty}L_\mathbb{B}^p(\Omega).
\end{equation*} The results in \cite{MTX} are as follows.

\begin{thm}\label{cotipo}
Given a Banach space $\mathbb{B}$  and $2\le q < \infty,$ the following statements are equivalent:
\begin{itemize}
\item [(i)]  $\mathbb{B}$ is of Lusin cotype $q.$
\item [(ii)] For every subordinated Poisson semigroup $\{\mathcal{P}_t\}_{t\ge 0}$
and for every (or, equivalently, for some) $p\in (1,\infty)$, there
is a constant $C$
 such that $\displaystyle \| g_1^q(f) \|_{ L^p(\Omega)} \le C
 \| f \|_{ L^p_{\mathbb{B}}(\Omega)} , $ for every $f \in L^p_{\mathbb{B}}(\Omega)$.
 \end{itemize}
\end{thm}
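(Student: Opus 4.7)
The plan is to establish the two implications separately; (ii)$\Rightarrow$(i) will be a specialization of the hypothesis, while (i)$\Rightarrow$(ii) will be the substantive step requiring both harmonic-analytic and probabilistic tools.

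For (ii)$\Rightarrow$(i), I would specialize to the heat semigroup on the torus $\mathbb{T}$, which satisfies the listed symmetric diffusion axioms. Its subordinated Poisson semigroup is, up to the change of variable $r=e^{-t}$, the classical Poisson semigroup appearing in the definition of Lusin cotype. Under this change of variable one checks that $g_1^q(f)$ and the functional $g_q(f)$ used in the definition of Lusin cotype are pointwise comparable (the factors $(1-r)$ and $t$ agree to first order as $r\to 1^-$), so the hypothesis immediately delivers Lusin cotype $q$.

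For the harder direction (i)$\Rightarrow$(ii), the strategy is to dominate $g_1^q(f)$ by the $q$-square function of a suitable $\mathbb{B}$-valued martingale and then invoke the equivalence (cited in the introduction, following Xu) between Lusin cotype $q$ and martingale cotype $q$. The central tools are Rota's dilation theorem and the subordination formula (1.4). Rota's theorem gives, for each dyadic time $t_n = 2^{-n}$, a martingale representation of $\mathcal{T}_{t_n}^{2}$ as an iterated conditional expectation on an extended probability space. Since conditional expectations extend contractively to $L^p_{\mathbb{B}}$, one can control a discrete (dyadic) version of the $g$-function of $\mathcal{T}_t$ by the $q$-square function of a $\mathbb{B}$-valued martingale; martingale cotype $q$ then yields the desired $L^p$ bound, and standard density/continuity arguments lift it to the continuous parameter $t\in(0,\infty)$.

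The final step is to pass from $\mathcal{T}_t$ to $\mathcal{P}_t$. Differentiating the subordination formula (1.4) in $t$ allows one to write $t\,\partial_t \mathcal{P}_t f = \int_0^\infty K(t,u)\,\mathcal{T}_u f\,du$ for an explicit kernel $K$. Taking $L^q(dt/t;\mathbb{B})$-norm and applying Minkowski's integral inequality reduces the bound on $g_1^q(f)$ to the $g$-function bound for $\mathcal{T}_t$ already established, provided $K$ has the correct homogeneity in $(t,u)$. The main obstacles I anticipate are: (a) executing Rota's dilation cleanly in the vector-valued setting, making sure that only conditional-expectation contractions (which tensorize with $\mathbb{B}$) are used rather than any scalar-specific inequalities, and (b) the kernel analysis in the subordination step, which requires precise homogeneity bounds on $K(t,u)$ to yield operator norms independent of $f$.
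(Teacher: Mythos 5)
The paper does not contain a proof of Theorem~\ref{cotipo}; it is quoted verbatim from \cite{MTX} (Mart\'inez--Torrea--Xu). So the comparison below is against the strategy that is known to work there, rather than against a proof in the present text.

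Your reduction (ii)$\Rightarrow$(i) is sound: the subordinated semigroup of the heat flow on $\mathbb{T}$ is, under $r=e^{-t}$, the classical Poisson convolution, the weights $t$ and $1-r$ agree to first order near $r=1$, and the contribution from $r$ bounded away from $1$ is controlled by the maximal function. For (i)$\Rightarrow$(ii), the toolkit you identify --- Rota's dilation theorem, the martingale-cotype/Lusin-cotype equivalence, and the subordination formula --- is indeed the one used in \cite{MTX}. However, the subordination step as you describe it fails concretely. Differentiating \eqref{1.4} gives $t\partial_t\mathcal{P}_tf=\int_0^\infty K(t,u)\,\mathcal{T}_uf\,du$ with $K(t,u)=\frac{t}{2\sqrt{\pi}}\,u^{-3/2}e^{-t^2/(4u)}(1-\frac{t^2}{2u})$, which is homogeneous of degree $-2$ under $(t,u)\mapsto(\lambda t,\lambda^2 u)$. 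The substitution $s=t/\sqrt{u}$ yields $\|K(\cdot,u)\|_{L^q(\mathbb{R}_+,dt/t)}=C_q/u$, so $\int_0^\infty\|K(\cdot,u)\|_{L^q(dt/t)}\,du$ diverges logarithmically at both ends: Minkowski's integral inequality gives nothing. The obstruction is structural, not technical: $\int_0^\infty K(t,u)\,du=0$ because $\mathcal{P}_t1=1$, and any successful argument must exploit this cancellation, which Minkowski destroys. Integrating by parts in $u$ so as to see $u\partial_u\mathcal{T}_uf$ does not rescue the estimate either, because the resulting kernel $\Psi(t,u)/u$ is again scale invariant with $L^q(dt/t)$-norm $\sim 1/u$.

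A second, related gap is the phrase ``standard density/continuity arguments'' for passing from Rota's discrete differences to the continuous derivative. That passage is the heart of the matter, not a formality: Rota's theorem controls $\sum_n\|\mathcal{P}_{(n+1)\delta}f-\mathcal{P}_{n\delta}f\|_\mathbb{B}^q$, while the $g$-function involves $\int_0^\infty\|t\partial_t\mathcal{P}_tf\|_\mathbb{B}^q\,dt/t$, and bridging the two requires a quantitative smoothing estimate --- which is precisely what subordination buys (analyticity of $\mathcal{P}_t$). Indeed in \cite{MTX} one does not first prove a $g$-function bound for $\mathcal{T}_t$ and then transfer it to $\mathcal{P}_t$: Rota is applied to $\mathcal{P}_t$ directly (it is itself a symmetric diffusion semigroup), and the cancellation of the subordination kernel is used to decompose $t\partial_t\mathcal{P}_t$ into a principal piece dominated by martingale-type differences, handled by martingale cotype, plus a remainder where the cancellation makes the kernel genuinely small. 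Your outline has the right ingredients, but the two steps you wave away are the ones that carry the proof.
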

 Let $\mathbb{E}_0\subset L^2(\Omega)$
be the subspace of all $h$ such that $\mathcal{P}_t(h)=h$ for all
$t\geq 0.$ Let $E_0: L^2( \Omega)\longrightarrow \mathbb{E}_0$ be
the orthogonal projection. $E_0$ extends to a contractive projection
on $L^p(\Omega)$ for every $1\leq p<\infty$.  $E_0(L^p(\Omega))$ is
exactly the fix point space of $\{\mathcal{P}_t\}_{t\geq 0}$ on
$L^p(\Omega),$ see \cite{Stein1}. Moreover, for any Banach space
$\mathbb{B}$, $E_0$ extends to a contractive projection on
$L_\mathbb{B}^p(\Omega)$ for every $1\leq p< \infty$ and
$E_0(L_\mathbb{B}^p(\Omega))$ is again the fix point space of
$\{\mathcal{P}_t\}_{t\geq 0}$ considered as a semigroup on
$L_\mathbb{B}^p(\Omega)$. In the particular case on $\mathbb{R}^n,$
$\mathbb{E}_0=0$ and so $E_0(L_\mathbb{B}^p(\mathbb{R}^n))=\{0\}$.
 In the sequel, we shall use the same
symbol $E_0$ to denote any of these contractive projections.

\begin{thm}\label{tipo}
Given a Banach space $\mathbb{B}$  and $1< q \le 2,$ the following statements are equivalent:
\begin{itemize}
\item [(i)]  $\mathbb{B}$ is of Lusin type $q.$
\item [(ii)] For every subordinated Poisson semigroup $\{\mathcal{P}_t\}_{t\ge 0}$
and for every (or, equivalently, for some) $p\in (1,\infty)$, there
is a constant $C$
 such that $\displaystyle   \| f \|_{ L^p_{\mathbb{B}}(\Omega)} \le C \Big(  \| E_0(f)
 \|_{ L^p_{\mathbb{B}}(\Omega)} +  \| g_1^q(f) \|_{ L^p(\Omega)}  \Big),$
for every $f \in L^p_{\mathbb{B}}(\Omega)$.
 \end{itemize}
\end{thm}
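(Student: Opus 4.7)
The plan is to derive Theorem \ref{tipo} from Theorem \ref{cotipo} applied to the dual space $\mathbb{B}^*$ by a standard duality argument, using the fact that $\mathbb{B}$ is of Lusin type $q$ (equivalently, of martingale type $q$) if and only if $\mathbb{B}^*$ is of Lusin cotype $q'$, where $1/q+1/q'=1$; note that $1<q\le 2$ gives $2\le q'<\infty$, matching the hypothesis of Theorem \ref{cotipo}.

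The engine of the argument is the Calder\'on-type reproducing identity
\begin{equation*}
\langle f-E_0 f,\,h\rangle \;=\; 4\int_0^\infty\!\!\int_\Omega \bigl\langle t\partial_t\mathcal{P}_t f(x),\,t\partial_t\mathcal{P}_t h(x)\bigr\rangle_{\mathbb{B},\mathbb{B}^*}\,d\mu(x)\,\frac{dt}{t},
\end{equation*}
valid for suitable $f\in L^p_\mathbb{B}(\Omega)$ and $h\in L^{p'}_{\mathbb{B}^*}(\Omega)$. This is a consequence of the spectral identity $4\int_0^\infty(t\partial_t\mathcal{P}_t)^2\,dt/t=I-E_0$ on $L^2(\Omega)$, which follows from writing $\mathcal{P}_t=e^{-tA}$ and computing $\int_0^\infty tA^2 e^{-2tA}\,dt=\tfrac14$ on the range of $A$, together with the self-adjointness of each $\mathcal{P}_t$.

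For (i)~$\Rightarrow$~(ii), assuming that $\mathbb{B}$ is of Lusin type $q$, its dual $\mathbb{B}^*$ is of Lusin cotype $q'$, so Theorem \ref{cotipo} yields $\|g_1^{q'}(h)\|_{L^{p'}}\le C\|h\|_{L^{p'}_{\mathbb{B}^*}}$. Starting from the reproducing identity I would apply H\"older's inequality three times in succession: first in the $(\mathbb{B},\mathbb{B}^*)$-pairing, then in $dt/t$ with exponents $(q,q')$ to produce $g_1^q(f)(x)\,g_1^{q'}(h)(x)$, then in $d\mu$ with exponents $(p,p')$, obtaining
\begin{equation*}
\bigl|\langle f-E_0 f,\,h\rangle\bigr| \;\le\; 4\|g_1^q(f)\|_{L^p}\,\|g_1^{q'}(h)\|_{L^{p'}} \;\le\; 4C\,\|g_1^q(f)\|_{L^p}\,\|h\|_{L^{p'}_{\mathbb{B}^*}}.
\end{equation*}
Taking the supremum over admissible $h$ (via measurable selection of a norming functional in $\mathbb{B}^*$) gives $\|f-E_0 f\|_{L^p_\mathbb{B}}\le 4C\|g_1^q(f)\|_{L^p}$, and the triangle inequality then produces (ii). For the converse (ii)~$\Rightarrow$~(i), I would specialise (ii) to the classical Poisson semigroup on the torus $\mathbb{T}$, which is the subordinated semigroup of the (symmetric diffusion Markovian) heat semigroup; here the fixed point subspace consists of constants, so $E_0 f=\hat f(0)$. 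A change of variable $r=e^{-t}$ then turns $g_1^q(f)$ into a quantity pointwise equivalent to the $g_q$ of the Introduction, and the inequality of (ii) becomes the defining inequality for Lusin type $q$.

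The main obstacle is technical rather than conceptual: to justify the reproducing identity and the duality pairing directly in the $L^p_\mathbb{B}/L^{p'}_{\mathbb{B}^*}$ framework (rather than just on $L^2$), and to control the $E_0$ term on both sides. The hypothesis $q>1$ forces $\mathbb{B}$ to have non-trivial Rademacher type and hence to be super-reflexive, which legitimises the duality $(L^p_\mathbb{B})^*=L^{p'}_{\mathbb{B}^*}$; one then has to verify convergence of the $dt/t$-integral for $\mathbb{B}$-valued functions and to run a density argument so that the identity above extends from a dense class (e.g.\ elements of the form $\mathcal{P}_{t_0}f$ with $t_0>0$) to all relevant $f,h$. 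These are exactly the technical devices already developed in \cite{MTX} and can be imported essentially verbatim.
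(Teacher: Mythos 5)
The paper itself states Theorem \ref{tipo} as a cited result from \cite{MTX}; the relevant in-paper argument is the proof of the more general Theorem B, which recovers Theorem \ref{tipo} at $\alpha = 1$. Your (i) $\Rightarrow$ (ii) matches that proof: the same reproducing identity (which is \eqref{02.22} with $\alpha = 1$, noting $4^1/\mathbf{\Gamma}(2) = 4$), the same three successive applications of H\"older, and the same appeal to the cotype theorem for $\mathbb{B}^*$. The only cosmetic difference is how the $E_0$ term is tracked: the paper splits $\int\langle f, g\rangle$ into its $E_0$ and $(I-E_0)$ pieces and bounds each, while you bound $\|f - E_0 f\|_{L^p_\mathbb{B}}$ and then add $\|E_0 f\|_{L^p_\mathbb{B}}$ by the triangle inequality; these are interchangeable. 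Also, the norming supremum over $h \in L^{p'}_{\mathbb{B}^*}$ with $\|h\| \le 1$ recovers $\|f - E_0 f\|_{L^p_\mathbb{B}}$ for any Banach space, so the super-reflexivity remark in your last paragraph is not needed for that step.

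Your (ii) $\Rightarrow$ (i) takes a genuinely different and, at $\alpha = 1$, shorter route than the paper's. You specialize (ii) to the classical Poisson semigroup on $\mathbb{T}$, identify $E_0 f = \hat f(0)$, and compare $g_1^q$ with the torus $g_q$ via $r = e^{-t}$, landing directly on Xu's definition of Lusin type. The paper's Theorem B argument instead specializes to $\mathbb{R}^n$ and runs a duality/Calder\'on--Zygmund argument via the operators $G_k$ and Theorem \ref{th3.2}, because no clean torus reduction exists for fractional $\alpha$. Your shortcut is valid here. One small imprecision: the two square functions are not \emph{pointwise equivalent} after $r = e^{-t}$. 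The $dr$-density becomes $(\log(1/r))^{q-1} r^{q-1}$ for $g_1^q$ against $(1-r)^{q-1}$ for $g_q$, and since $r\log(1/r) \le C(1-r)$ on $(0,1)$ you do get $g_1^q(f) \le C\,g_q(f)$ pointwise, which is exactly the direction you use; but the reverse inequality fails as $r \to 0^+$, so only a one-sided comparison holds. State that instead of equivalence and the argument is clean.
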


\begin{thm}\label{tipoa.e.}
Given a Banach space $\mathbb{B}$  and $2\le q<\infty,$ the
following statements are equivalent:
\begin{itemize}
\item [(i)]  $\mathbb{B}$ is of Lusin cotype $q.$
\item [(ii)] For any $f \in L^1_{\mathbb{B}}(\mathbb{T}),$ $g_q(f)(z)<\infty$ for almost every $z\in \mathbb{T}.$
\item [(iii)] For any $f \in L^1_{\mathbb{B}}(\mathbb{R}^n),$ $g_1^q(f)(x)<\infty$ for almost every $x\in \mathbb{R}^n.$
 \end{itemize}
\end{thm}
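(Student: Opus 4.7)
The proof splits into the forward implications $(i)\Rightarrow(ii)$ and $(i)\Rightarrow(iii)$, handled by vector-valued Calder\'on--Zygmund theory, and the converses $(ii),(iii)\Rightarrow(i)$, handled by a translation-invariance argument of Nikishin--Stein type.

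\textbf{Forward direction.} To prove $(i)\Rightarrow(iii)$, set $\mathbb{E}:=L^q((0,\infty),\,dt/t;\mathbb{B})$ and realize $g_1^q$ as the pointwise $\mathbb{E}$-norm of the vector-valued convolution operator $Tf(x)(t):=t\,\partial_t\mathcal{P}_tf(x)$, so that $g_1^q(f)(x)=\|Tf(x)\|_{\mathbb{E}}$. By Theorem~\ref{cotipo}, Lusin cotype $q$ gives $T:L^p_{\mathbb{B}}(\mathbb{R}^n)\to L^p_{\mathbb{E}}(\mathbb{R}^n)$ bounded for some $p\in(1,\infty)$. The convolution kernel $K(x)(t)=t\,\partial_tP_t(x)$ satisfies the standard Calder\'on--Zygmund size and H\"ormander smoothness bounds in the $\mathbb{E}$-norm, which follow from routine estimates on the Poisson kernel and its derivatives combined with the scaling $P_t(x)=t^{-n}P_1(x/t)$. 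Vector-valued Calder\'on--Zygmund theory then extends $T$ to a weak-type $(1,1)$ operator $L^1_{\mathbb{B}}(\mathbb{R}^n)\to L^{1,\infty}_{\mathbb{E}}(\mathbb{R}^n)$, which yields $g_1^q(f)<\infty$ a.e.\ for every $f\in L^1_{\mathbb{B}}(\mathbb{R}^n)$. The implication $(i)\Rightarrow(ii)$ is identical, with the Poisson semigroup on $\mathbb{T}$ and target space $L^q((0,1),dr/(1-r);\mathbb{B})$.

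\textbf{Converse direction.} We prove $(iii)\Rightarrow(i)$ by contrapositive; the argument for $(ii)\Rightarrow(i)$ is parallel. Suppose $\mathbb{B}$ is not of Lusin cotype $q$, so that by Theorem~\ref{cotipo} the map $g_1^q$ is unbounded from $L^p_{\mathbb{B}}(\mathbb{R}^n)$ into $L^p(\mathbb{R}^n)$ for every $p\in(1,\infty)$. The operator $g_1^q$ is positively homogeneous, subadditive (by Minkowski in $L^q$), translation invariant, and, owing to the scaling $P_t(x)=t^{-n}P_1(x/t)$, dilation invariant. Standard Nikishin--Stein-type reasoning, exploiting this full invariance group, produces a randomized sum $f=\sum_k c_k\,\tau_{a_k}\phi_k\in L^1_{\mathbb{B}}(\mathbb{R}^n)$ of translated and rescaled test functions for which $g_1^q(f)=\infty$ on a set of positive measure, contradicting (iii). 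Because the output $g_1^q(f)$ is scalar-valued, the randomization step relies only on classical Khintchine inequalities on $\mathbb{R}$ and requires no geometric hypothesis on $\mathbb{B}$.

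\textbf{Main obstacle.} The hardest step is this converse: the failure of a quantitative $L^p$-bound is a priori strictly weaker than the qualitative a.e.-finiteness statement, and bridging the two demands a careful construction. On $\mathbb{R}^n$ the lack of a compact group structure forces one to combine translation and dilation invariance, whereas on $\mathbb{T}$ the averaging is cleaner but still essential. By contrast, the forward direction reduces to verifying the H\"ormander condition in the $\mathbb{E}$-norm, i.e., bounding $L^q$-norms in $t$ of finite differences of Poisson-kernel derivatives, which is routine.
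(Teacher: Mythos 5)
The theorem you are proving is actually imported verbatim from Mart\'{\i}nez--Torrea--Xu \cite{MTX}; the present paper states it in the introduction but does not re-prove it. The closest thing to a proof in the paper is the argument for Theorem~C in Section~\ref{A.E.} (the fractional analogue), which runs the implication $\textup{(iii)}\Rightarrow\textup{(i)}$ \emph{forwards}: given a.e.\ finiteness on $L^{p_0}_{\mathbb{B}}$, one (a) uses a closed-graph / Baire-category argument to upgrade a.e.\ finiteness to continuity in measure on balls, (b) runs a Calder\'on--Zygmund decomposition to push a.e.\ finiteness down to $L^1_{\mathbb{B}}$, and (c) invokes Lemma~\ref{7.3} (\cite[Lemma~7.3]{MTX}: a translation- and dilation-invariant continuous sublinear operator $L^1_{\mathbb{B}}\to L^0$ is weak $(1,1)$), then Remark~\ref{CZ} (to pass from weak $(1,1)$ to $L^p$ boundedness of the underlying CZ operator), and finally Theorem~\ref{cotipo}. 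Your forward direction ($\textup{(i)}\Rightarrow\textup{(ii)},\textup{(iii)}$) matches the paper's CZ framework (Proposition~\ref{prop5}, Remark~\ref{CZ}) and is fine.

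Your converse, however, has a genuine gap. You argue by contrapositive: from ``$\mathbb{B}$ is not of Lusin cotype $q$'' you deduce via Theorem~\ref{cotipo} that $g_1^q$ is unbounded $L^p_{\mathbb{B}}\to L^p$ for every $p\in(1,\infty)$, and then assert that a Stein--Nikishin randomization ``produces'' an $f\in L^1_{\mathbb{B}}$ with $g_1^q(f)=\infty$ on a positive-measure set. But the Stein randomized construction starts from the failure of \emph{weak type $(1,1)$}, not from failure of \emph{strong type $(p,p)$} for $p>1$, and the two are not a priori the same thing for a sublinear operator. The bridge is precisely the Calder\'on--Zygmund equivalence in Remark~\ref{CZ}: because $g_1^q$ is the $L^q_{\mathbb{B}}(\mathbb{R}_+,dt/t)$-norm of a linear CZ operator with regular vector-valued kernel, its weak $(1,1)$ boundedness and its $L^p$ boundedness are equivalent, so the failure of all strong $(p,p)$ bounds does rule out weak $(1,1)$. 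You never invoke this step. Even after that, the construction of a single $f\in L^1_{\mathbb{B}}$ exhibiting divergence is exactly the content of the maximal principle encoded in Lemma~\ref{7.3}; stating that ``Nikishin--Stein-type reasoning produces a randomized sum'' treats as obvious what is in fact the main technical input. A secondary quibble: the randomization in Stein's theorem is over random translations/dilations combined with Borel--Cantelli, not Khintchine's inequality; Khintchine is not what makes the construction work. Your proposal has the right architecture but needs to make explicit the passage $L^p$-unboundedness $\Rightarrow$ failure of weak $(1,1)$ via Remark~\ref{CZ}, and to cite or reproduce the concrete maximal-principle lemma rather than gesturing at it.
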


As we said before, our goal is to characterize Lusin cotype and
Lusin type properties of Banach spaces when the standard derivative
is substitute by the fractional derivative. Parallel  to Segovia and
Wheeden, we define
\begin{equation}
\label{2.1}
\partial_t^{\alpha}\mathcal{P}_tf=\frac{e^{-i \pi(m-\alpha)}}{\mathbf{\Gamma}(m-\alpha)}\int_0^\infty
{\partial_t^m}\mathcal{P}_{t+s}(f)s^{m-\alpha-1}ds, \qquad t>0,
\end{equation}
where $m$ is the smallest integer which strictly exceeds $\alpha.$
In Section \ref{prepare}, we shall see  that for any $f \in
L^p(\Omega)$, this partial derivative is well defined  and then we
are allowed to consider the following ``fractional Littlewood--Paley
$g$-function"
\begin{equation}\label{1.7}
g_\alpha^q(f)=\left(\int_0^\infty \left\|t^\alpha
\partial_t^\alpha \mathcal
{P}_tf\right\|_\mathbb{B}^q\frac{dt}{t}\right)^{\frac{1}{q}},\qquad
 f\in \bigcup\limits_{1\leq p\leq
\infty}L_\mathbb{B}^p(\Omega), \, \alpha >0.
\end{equation}
The results in this paper can be classified in three types:
\begin{itemize}
\item Theorems which generalize the results in \cite{MTX} for the case of fractional derivatives (Theorem A and Theorem
B).
\item New theorems involving area functions and  $g^*_\lambda$ functions on $\mathbb{R}^n$ (Theorem \ref{thm4} -- Theorem
\ref{thm9}).
\item New results for characterizations of Lusin cotype through almost everywhere finiteness (Theorem
C).
\end{itemize}

In our opinion, it is worth to mention that the proof of Theorem C
contains some new ideas that can be applied to a huge class of
operators. Roughly, the method used in the proof is  the following.
If an operator $T$ with  a Calder\'on--Zygmund kernel is
a.e.~pointwise finite  ($Tf(x) < \infty$) for any function $f$ in
$L^{p_0}(\mathbb{R}^n)$ and some $p_0\in [1, \infty)$, then $T$ is
bounded from $L^1(\mathbb{R}^n)$ into weak-$L^1(\mathbb{R}^n)$. This
philosophy can be translated to the vector-valued case and we can
get results like the one presented in Theorem D.

Now we list our main theorems.

\

\noindent \textbf{Theorem A.}  Given a Banach space $\mathbb{B}$ and
$2\le q < \infty $, the following statements are equivalent:
\begin{enumerate}
\item[(i)] $\mathbb{B}$ is of Lusin cotype $q$.
\item[(ii)] For every symmetric diffusion semigroup $\{\mathcal{T}_t\}
_{t\geq 0}$ with subordinated semigroup $\{\mathcal {P}_t\}_{t\geq
0}$,  for every (or, equivalently, for some) $p\in (1,\infty)$, and
for every (or, equivalently, for some) $\alpha >0$, there is a
constant $C$ such that
\begin{equation*}
\left\|g_{\alpha}^q(f)\right\|_{L^p(\Omega)}\leq
C\|f\|_{L_\mathbb{B}^p(\Omega)}, \quad \forall f\in
L_\mathbb{B}^p(\Omega).
\end{equation*}
\end{enumerate}

\

\noindent \textbf{Theorem B.} Given a Banach space $\mathbb{B}$ and
$1<q\leq 2$ , the following statements are
equivalent:
\begin{enumerate}
\item[(i)] $\mathbb{B}$ is of Lusin  type $q$.
\item[(ii)] For every symmetric diffusion semigroup $\{\mathcal {T}_t\}_{t\geq 0}$ with subordinated semigroup
$\{\mathcal {P}_t\}_{t\geq 0}$, for every (or, equivalently, for
some) $p\in (1,\infty)$, and for every (or, equivalently, for some)
$\alpha >0$, there is a constant $C$ such that
$$
\left\|f\right\|_{L_\mathbb{B}^p(\Omega)}\leq
C\left(\left\|E_0(f)\right\|_{L_\mathbb{B}^p(\Omega)}+
\left\|g_{\alpha}^q(f)\right\|_{L^p(\Omega)}\right),\qquad \forall
f\in L_\mathbb{B}^p(\Omega).$$
\end{enumerate}
\

On the particular Lebesgue measure space
$\left(\mathbb{R}^n,dx\right),$ we have the following theorems.

\

\noindent \textbf{Theorem C.} Given a Banach space $\mathbb{B}$,
$2\leq q<\infty$, the following statements are equivalent:
\begin{enumerate}
\item[(i)] $\mathbb{B}$ is of Lusin cotype $q.$
\item[(ii)] For every (or, equivalently, for
some) $p\in [1,\infty)$ and for every (or, equivalently, for some)
$\alpha >0,$ $g_\alpha^q(f)(x)<\infty$ for a.e. $x\in\mathbb{R}^n,$
for every $f\in L_\mathbb{B}^p(\mathbb{R}^n).$
\item[(iii)] For every (or, equivalently, for
some) $p\in [1,\infty)$ and for every (or, equivalently, for some)
$\alpha >0,$ $S_\alpha^q(f)(x)<\infty$ for a.e. $x\in\mathbb{R}^n,$
for every $f\in L_\mathbb{B}^p(\mathbb{R}^n).$
\item[(iv)] For every (or, equivalently, for
some) $p\in [q,\infty)$ and for every (or, equivalently, for some)
$\alpha >0,$
 $g_{\lambda, \alpha}^{q, *}(f)(x)<\infty$ for a.e. $x\in\mathbb{R}^n,$ for every
$f\in L_\mathbb{B}^p(\mathbb{R}^n).$
\end{enumerate}

\

\noindent \textbf{Theorem D.} Given a Banach space $\mathbb{B}$,
  the following statements are
equivalent:
\begin{enumerate}
\item[(i)] $\mathbb{B}$ is UMD.
\item[(ii)]  For every (or, equivalently, for some) $p\in [1,\infty)$, $$ \lim_{\varepsilon \rightarrow 0^+ }
 \int_{|x-y|> \varepsilon} \frac{f(y)}{x-y}dy\, \,  \hbox{exists \textup{a.e.} } x\in \mathbb{R}, \, \,  \hbox{ for every} \, \, \,
 f\in L^p_\mathbb{B}(\mathbb{R}).$$
\end{enumerate}

We want to mention  that in \cite{H} T. Hyt\"onen extended some
results in \cite{MTX}  to the case of  appropriated stochastic
integrals. On the other hand we think that this paper contains some
new conical square function estimates in the sense of \cite{HNP}.

The paper is organized as follows. In Section \ref{prepare}, we
present a systematic study of some properties related to the
fractional derivatives for general Poisson semigroups. Section
\ref{tech g-function} is devoted to the analysis of several
relations between the fractional Litlewood--Paley $g$-functions,
some of them for general Poisson semigroups, while others are for
Poisson semigroups on $\mathbb{R}^n$.  In this case, we need the
theory of Calder\'on--Zygmund as a fundamental tool. Section
\ref{g-function} contains the proofs of Theorem A and Theorem B.
Section \ref{onRn} is devoted to discuss the similar results for the
fractional area function and the fractional $g_\lambda^*$-function
on $\mathbb{R}^n.$ Section \ref{A.E.} is devoted to the proof of
Theorem C. Finally we prove Theorem D in Section \ref{UMD}.

Throughout this paper, the letter $C$ will denote a positive
constant which may change from one instance to another and depend on
the parameters involved. We will make a frequent use, without
mentioning it in relevant places, of the fact that for a positive
$A$ and a non-negative $a,$
$$\sup\limits_{t>0}t^a
\exp(-At)=C_{a, A}<\infty.$$

\section{Fractional Derivatives}\label{prepare}

In this section, we shall consider the general symmetric diffusion
semigroup $\{\mathcal{T}_t\}_{t\geq 0}$ defined on $L^p(\Omega).$
Given such a semigroup $\{\mathcal{T}_t\}_{t\geq 0}$, we consider
its subordinated semigroup $\{\mathcal{P}_t\}_{t\geq 0}$ defined as
in \eqref{1.4}.

\begin{thm}\label{thm2.1}
Given a Banach space $\mathbb{B},$ $1\leq p\leq \infty,$
$\alpha>0,$ and $t>0,$ $\partial_t^\alpha\mathcal{P}_tf$ is well
defined as a function in $L_{\mathbb{B}}^p\left(\Omega \right)$ for
any $f\in L_{\mathbb{B}}^p\left(\Omega \right).$ Moreover, there
exists a constant $C_\alpha$ such that
\begin{equation}\label{02.3}
\left\|\partial_t^\alpha\mathcal{P}_tf\right\|_{L_{\mathbb{B}}^p\left(\Omega\right)}
\leq\frac{C_\alpha}{t^\alpha}\left\|f\right\|_{L_{\mathbb{B}}^p\left(\Omega\right)},
\quad \forall f\in L_{\mathbb{B}}^p\left(\Omega \right).
\end{equation}
\end{thm}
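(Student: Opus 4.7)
The plan is to reduce everything to the integer-order case $\partial_t^m\mathcal{P}_tf$, which can be handled directly by the subordination formula \eqref{1.4}, and then use the defining identity \eqref{2.1} together with the elementary Beta-type integral $\int_0^\infty (t+s)^{-m}s^{m-\alpha-1}\,ds = B(\alpha,m-\alpha)\,t^{-\alpha}$.

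Step 1: Estimate $\partial_t^m\mathcal{P}_tf$. Writing $\psi_t(u):=\frac{t}{2\sqrt{\pi}}\frac{e^{-t^2/4u}}{u^{3/2}}$, the subordination identity gives $\mathcal{P}_tf=\int_0^\infty \psi_t(u)\,\mathcal{T}_uf\,du$ as a Bochner integral in $L_{\mathbb{B}}^p(\Omega)$. A direct computation shows the scaling identity $\psi_t(u)=t^{-2}\psi_1(u/t^2)$, and inductively
\begin{equation*}
\partial_t^m \psi_t(u) \;=\; t^{-m-2}\,\Phi_m(u/t^2),
\end{equation*}
where $\Phi_m$ is a Schwartz-type function on $(0,\infty)$ with $\|\Phi_m\|_{L^1(0,\infty)}<\infty$. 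Since $\|\mathcal{T}_uf\|_{L_{\mathbb{B}}^p}\le\|f\|_{L_{\mathbb{B}}^p}$ uniformly in $u$, dominated convergence justifies differentiating under the integral sign, so $\partial_t^m\mathcal{P}_tf = \int_0^\infty \partial_t^m\psi_t(u)\,\mathcal{T}_uf\,du$ is a well-defined element of $L_{\mathbb{B}}^p(\Omega)$; taking norms and using the change of variables $v=u/t^2$ yields
\begin{equation*}
\bigl\|\partial_t^m\mathcal{P}_tf\bigr\|_{L_{\mathbb{B}}^p(\Omega)} \;\le\; \|\Phi_m\|_{L^1}\,t^{-m}\,\|f\|_{L_{\mathbb{B}}^p(\Omega)} \;=\; C_m\,t^{-m}\,\|f\|_{L_{\mathbb{B}}^p(\Omega)}.
\end{equation*}

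Step 2: Plug into \eqref{2.1} with $m$ the least integer strictly exceeding $\alpha$. Applying Step~1 to $\mathcal{P}_{t+s}$ gives
\begin{equation*}
\bigl\|\partial_t^m\mathcal{P}_{t+s}f\bigr\|_{L_{\mathbb{B}}^p(\Omega)} \;\le\; C_m(t+s)^{-m}\|f\|_{L_{\mathbb{B}}^p(\Omega)}.
\end{equation*}
Since $m-\alpha-1>-1$ ensures integrability at $s=0$ and $m>\alpha>0$ ensures integrability at $s=\infty$, the function $s\mapsto \partial_t^m\mathcal{P}_{t+s}f\,s^{m-\alpha-1}$ is Bochner integrable on $(0,\infty)$; hence $\partial_t^\alpha\mathcal{P}_tf$ is a well-defined element of $L_{\mathbb{B}}^p(\Omega)$.

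Step 3: The announced bound \eqref{02.3} now follows from the triangle inequality and the scaling computation
\begin{equation*}
\int_0^\infty (t+s)^{-m}\,s^{m-\alpha-1}\,ds \;=\; t^{-\alpha}\int_0^\infty (1+u)^{-m}u^{m-\alpha-1}\,du \;=\; B(\alpha,m-\alpha)\,t^{-\alpha},
\end{equation*}
which gives $\|\partial_t^\alpha\mathcal{P}_tf\|_{L_{\mathbb{B}}^p(\Omega)}\le C_\alpha t^{-\alpha}\|f\|_{L_{\mathbb{B}}^p(\Omega)}$ with $C_\alpha = C_m\,B(\alpha,m-\alpha)/|\Gamma(m-\alpha)|$.

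The only nontrivial point is Step~1: verifying the scaling form of $\partial_t^m\psi_t$ and the $L^1$-norm of $\Phi_m$. Everything else (differentiation under the integral sign, absolute convergence of the Bochner integrals) reduces to dominated convergence using the uniform contractivity of $\mathcal{T}_u$ on $L_{\mathbb{B}}^p(\Omega)$.
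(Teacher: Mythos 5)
Your argument follows the same two-step scheme as the paper: first bound $\partial_t^m\mathcal{P}_tf$ for the smallest integer $m>\alpha$ via the subordination formula, then substitute that bound into the defining identity \eqref{2.1} and evaluate the resulting Beta integral $\int_0^\infty (t+s)^{-m}s^{m-\alpha-1}\,ds = \mathbf{B}(m-\alpha,\alpha)\,t^{-\alpha}$. The only cosmetic difference is in Step 1, where you derive the integer-order bound from the scaling identity $\partial_t^m\psi_t(u)=t^{-m-2}\Phi_m(u/t^2)$ and $\|\Phi_m\|_{L^1}<\infty$, whereas the paper uses the pointwise bound $\bigl|\partial_t^m\bigl(\tfrac{t}{\sqrt u}e^{-t^2/4u}\bigr)\bigr|\le C_m u^{-m/2}e^{-t^2/4u}$; both give the same $C_m t^{-m}$ estimate after integration in $u$.
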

\begin{proof}
Firstly, let us consider the case $\alpha=m,\ m=1,2,\ldots.$ We know
that, for any $m=1,2,\ldots,$ there exist  constants $C_m$ such that
\begin{equation*}
\partial_t^m\left(\frac{t}{\sqrt{u}}~e^{-{\frac{t^2}{4u}}}\right)\leq
C_m\frac{1}{\left(\sqrt{u}\right)^m}~e^{-{\frac{t^2}{4u}}}.
\end{equation*}
Then, by using formula (\ref{1.4}), we have
\begin{multline}\label{02.2}
\left\|\partial_t^m\mathcal{P}_tf\right\|_{L_{\mathbb{B}}^p\left(\Omega\right)}
  \leq C\int_0^\infty \left|\partial_t^m\left(\frac{t}{\sqrt{u}}~e^{-{\frac{t^2}{4u}}}\right)\right|
        \left\|\mathcal{T}_{u}f\right\|_{L_{\mathbb{B}}^p\left(\Omega\right)}\frac{du}{u}\\
  \leq C_m\int_0^\infty \frac{1}{\left(\sqrt{u}\right)^m}~e^{-{\frac{t^2}{4u}}}\frac{du}{u}
         \left\|f\right\|_{L_{\mathbb{B}}^p\left(\Omega\right)}
  = \frac{C_m}{t^m}\left\|f\right\|_{L_{\mathbb{B}}^p\left(\Omega\right)}.
\end{multline}
So we have proved \eqref{02.3} when $\alpha$ is integer.  Therefore, given $\alpha >0$, we have
\begin{align}\label{beta}
\left\|\partial_t^\alpha\mathcal{P}_tf\right\|_{L_{\mathbb{B}}^p\left(\Omega\right)}
  &=   \left\|\frac{e^{-i\pi (m-\alpha)}}{\mathbf{\Gamma}(m-\alpha)}\int_0^\infty\partial_t^m\mathcal{P}_{t+s}(f)s^{m-\alpha-1}ds
        \right\|_{L_{\mathbb{B}}^p\left(\Omega\right)}\nonumber\\
 &\leq \frac{C_m}{\mathbf{\Gamma}(m-\alpha)}\left\|f\right\|_{{L_{\mathbb{B}}^p
        \left(\Omega\right)}} \int_0^\infty\frac{1}{(t+s)^{m}}
        s^{m-\alpha-1}{ds}\\
  &=    \frac{C_m}{\mathbf{\Gamma}(m-\alpha)} \mathbf{B}(m-\alpha, \alpha) \frac{  \left\|f\right\|_{{L_{\mathbb{B}}^p
        \left(\Omega\right)}}}{ t^\alpha }
   =   C_\alpha    \frac{  \left\|f\right\|_{{L_{\mathbb{B}}^p
        \left(\Omega\right)}}}{ t^\alpha } ,\nonumber
 \end{align}
where $\mathbf{B}$ denotes the Beta function, see \cite{Leb}.
\end{proof}

Observe that by estimate (\ref{02.2}),  we can perform  integration
by parts in  the formula (\ref{2.1}). In particular, the formula
\eqref{2.1} is valid for $\alpha$ being integer.

\begin{thm}\label{thm2.2}
Given a Banach space $\mathbb{B}$ and $0<\beta<\gamma,$ we have
\begin{equation}\label{02.5}
\partial_t^\beta\mathcal{P}_{t}f
=\frac{e^{-i\pi(\gamma-\beta)}}{\mathbf{\Gamma}(\gamma-\beta)}\int_0^\infty\partial_t^\gamma
\mathcal{P}_{t+s}(f)s^{\gamma-\beta-1}ds,\quad \forall f\in
\bigcup\limits_{1\leq p\leq \infty}L_{\mathbb{B}}^p
\left(\Omega\right).
\end{equation}
\end{thm}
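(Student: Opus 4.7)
The plan is to unfold the right-hand side of \eqref{02.5} using the definition \eqref{2.1} of $\partial_t^\gamma$, swap the order of integration via Fubini, evaluate the resulting inner integral as a Beta function, and identify the outcome with $\partial_t^\beta\mathcal{P}_t f$. Let $k$ denote the smallest integer strictly exceeding $\gamma$; in particular $k>\gamma>\beta$. Substituting
\[
\partial_t^\gamma\mathcal{P}_{t+s}f=\frac{e^{-i\pi(k-\gamma)}}{\mathbf{\Gamma}(k-\gamma)}\int_0^\infty\partial_t^k\mathcal{P}_{t+s+r}(f)\,r^{k-\gamma-1}\,dr
\]
into the right-hand side of \eqref{02.5} and changing variables $u=s+r$ (so that $r=u-s$ ranges over $(0,u)$) yields, after swapping the order of integration, the inner integral
\[
\int_0^u s^{\gamma-\beta-1}(u-s)^{k-\gamma-1}\,ds=u^{k-\beta-1}\mathbf{B}(\gamma-\beta,\,k-\gamma)=u^{k-\beta-1}\frac{\mathbf{\Gamma}(\gamma-\beta)\mathbf{\Gamma}(k-\gamma)}{\mathbf{\Gamma}(k-\beta)}.
\]
Combining the phases via $e^{-i\pi(\gamma-\beta)}e^{-i\pi(k-\gamma)}=e^{-i\pi(k-\beta)}$ and cancelling the common Gamma factors, the right-hand side of \eqref{02.5} reduces to
\[
\frac{e^{-i\pi(k-\beta)}}{\mathbf{\Gamma}(k-\beta)}\int_0^\infty u^{k-\beta-1}\,\partial_t^k\mathcal{P}_{t+u}(f)\,du.
\]

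To conclude that this expression coincides with $\partial_t^\beta\mathcal{P}_tf$, I would first establish a preliminary lemma: for any integer $k$ strictly exceeding $\beta$ (not merely the smallest one), one has
\[
\partial_t^\beta\mathcal{P}_tf=\frac{e^{-i\pi(k-\beta)}}{\mathbf{\Gamma}(k-\beta)}\int_0^\infty u^{k-\beta-1}\,\partial_t^k\mathcal{P}_{t+u}(f)\,du.
\]
This is proved by induction on $k\ge m$, where $m$ is the smallest integer exceeding $\beta$ (the case $k=m$ is the definition \eqref{2.1}). The inductive step is an integration by parts in $u$, using $\partial_u(\partial_t^k\mathcal{P}_{t+u}f)=\partial_t^{k+1}\mathcal{P}_{t+u}f$, together with the identities $\mathbf{\Gamma}(k+1-\beta)=(k-\beta)\mathbf{\Gamma}(k-\beta)$ and $e^{-i\pi(k+1-\beta)}=-e^{-i\pi(k-\beta)}$. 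The boundary terms vanish: at $u=0$ the factor $u^{k-\beta}$ kills everything since $k>\beta$, while at $u=\infty$ one combines $u^{k-\beta}$ with the bound $\|\partial_t^k\mathcal{P}_{t+u}f\|\leq C(t+u)^{-k}\|f\|$ from the integer case of Theorem \ref{thm2.1} to obtain decay like $u^{-\beta}\to 0$.

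The chief technical point is the justification of Fubini's theorem at the start. Here one uses Theorem \ref{thm2.1} in the integer-order case (already proved in \eqref{02.2}) to bound
\[
\bigl\|\partial_t^k\mathcal{P}_{t+s+r}f\bigr\|_{L^p_\mathbb{B}(\Omega)}\leq \frac{C_k}{(t+s+r)^k}\|f\|_{L^p_\mathbb{B}(\Omega)},
\]
and then checks that
\[
\int_0^\infty\!\!\int_0^\infty \frac{s^{\gamma-\beta-1}r^{k-\gamma-1}}{(t+s+r)^k}\,dr\,ds<\infty,
\]
which follows from the convergence of the corresponding Beta integral together with the constraints $0<\beta<\gamma<k$. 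This bound is also what guarantees that each of the iterated integrals we handle converges as a Bochner integral in $L^p_\mathbb{B}(\Omega)$, so all manipulations are legitimate. Once Fubini is in place and the auxiliary $m$-independence lemma is established, the computation above delivers \eqref{02.5}.
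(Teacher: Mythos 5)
Your proposal is correct and takes essentially the same route as the paper: unfold $\partial_t^\gamma$ via its definition, substitute and apply Fubini to produce the Beta integral, and then reduce the resulting $k$-th order integral representation to the defining $m$-th order one by integrating by parts $k-m$ times (which you package as an auxiliary lemma proved by induction, while the paper does the same reduction inline, citing the bound \eqref{02.2} to justify Fubini and the vanishing boundary terms).
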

\begin{proof}
Assume that $f\in L_{\mathbb{B}}^p \left(\Omega\right)$ for some
$1\leq p\leq \infty,$ by changing variables and Fubini's theorem, we
have the following computation as in \eqref{beta}
\begin{align}\label{02.7}
\int_0^\infty
\partial_t^\gamma\mathcal{P}_{t+s}(f)s^{\gamma-\beta-1}ds
   &=\int_0^\infty\frac{e^{-i\pi(k-\gamma)}}{\mathbf{\Gamma}(k-\gamma)}\int_0^\infty \partial_t^k
        \mathcal{P}_{t+s+u}(f)u^{k-\gamma-1}dus^{\gamma-\beta-1}ds\nonumber\\
   &=\frac{e^{-i\pi(k-\gamma)}}{\mathbf{\Gamma}(k-\gamma)}\int_0^\infty \int_s^\infty \partial_t^k
        \mathcal{P}_{t+\bar{u}}(f)(\bar{u}-s)^{k-\gamma-1}s^{\gamma-\beta-1}d\bar{u}ds\\
   &= \frac{e^{-i\pi(k-\gamma)}\mathbf{B}(k-\gamma, \gamma-\beta)}{\mathbf{\Gamma}(k-\gamma)}
        \int_0^\infty  \partial_t^k
        \mathcal{P}_{t+\bar{u}}(f)\bar{u}^{k-\beta-1}d\bar{u},\nonumber
\end{align}
where $k$ is the smallest integer which is bigger than $\gamma.$ By
\eqref{02.2}, we know that we can integrate by parts in the last
integral of \eqref{02.7}. Let $m$ be the smallest integer which is
bigger than $\beta$. Then by integrating by parts $k-m$ times, we
obtain
\begin{align*}
 & \int_0^\infty\partial_t^\gamma
\mathcal{P}_{t+s}(f)s^{\gamma-\beta-1}ds\nonumber\\
&= \frac{\mathbf{B}(k-\gamma,
\gamma-\beta)e^{-i\pi(m-\gamma)}}{\mathbf{\Gamma}(k-\gamma)}
      (k-\beta-1)\cdots(m-\beta)
      \int_0^\infty\partial_t^{m} \mathcal{P}_{t+\bar{u}}(f)
      \bar{u}^{m-\beta-1}d\bar{u}\nonumber\\
&=e^{-i\pi(\gamma-\beta)}\mathbf{\Gamma}(\gamma-\beta)
    \partial_t^\beta \mathcal{P}_{t}f.\nonumber
\end{align*}
Hence we get \eqref{02.5}.
\end{proof}

\begin{thm}\label{thm2.3}
Given a Banach space $\mathbb{B}$ and $\alpha,\ \beta>0,$
$\partial_t^\alpha\left(\partial_t^\beta\mathcal{P}_{t}f\right)$ can
be defined as
\begin{equation}\label{02.8}
\partial_t^\alpha\left(\partial_t^\beta\mathcal{P}_{t}f\right)
   = \frac{e^{-i\pi(m-\alpha)}}{\mathbf{\Gamma}(m-\alpha)}\int_0^\infty{\partial_t^m}
    \left(\partial_{t+s}^\beta\mathcal{P}_{t+s}f\right)s^{m-\alpha-1}ds,
     \quad \forall f\in\bigcup\limits_{1\leq p\leq \infty}L_{\mathbb{B}}^p
\left(\Omega\right),
\end{equation}
where $m$ is the smallest integer which is bigger than $\alpha$.
Then \begin{equation}\label{02.6}
\partial_t^\alpha\left(\partial_t^\beta\mathcal{P}_{t}f\right)
=\partial_t^{\alpha+\beta}\mathcal{P}_{t}f, \quad \forall f\in
\bigcup\limits_{1\leq p\leq \infty}L_{\mathbb{B}}^p
\left(\Omega\right).
\end{equation}
\end{thm}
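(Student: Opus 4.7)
The plan is to address \eqref{02.8} and \eqref{02.6} in two stages: first check that the right-hand side of \eqref{02.8} is an absolutely convergent Bochner integral in $L^p_\mathbb{B}(\Omega)$, and then collapse the resulting double integral (after substituting the definition of $\partial_{t+s}^\beta \mathcal{P}_{t+s} f$) into a single one that Theorem \ref{thm2.2} identifies as $\partial_t^{\alpha+\beta}\mathcal{P}_t f$. The whole argument reduces to integer-order derivatives via Theorem \ref{thm2.2}, after which it becomes a Fubini interchange and a Beta-function identity.

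For the well-definedness of \eqref{02.8}, I fix an integer $n > \beta$ and apply Theorem \ref{thm2.2} (with $\gamma = n$) to represent
\[
\partial_{t+s}^\beta \mathcal{P}_{t+s}f = \frac{e^{-i\pi(n-\beta)}}{\mathbf{\Gamma}(n-\beta)}\int_0^\infty \partial_t^n \mathcal{P}_{t+s+\tau}(f)\,\tau^{n-\beta-1}d\tau.
\]
Since \eqref{02.2} yields the pointwise estimate $\|\partial_t^k \mathcal{P}_t f\|_{L^p_\mathbb{B}} \leq C_k t^{-k}\|f\|_{L^p_\mathbb{B}}$ for every integer $k$, the operator $\partial_t^m$ (which on a function of $t+s$ agrees with $\partial_{t+s}^m$) can be moved under the integral, giving
\[
\partial_t^m\bigl(\partial_{t+s}^\beta \mathcal{P}_{t+s}f\bigr) = \frac{e^{-i\pi(n-\beta)}}{\mathbf{\Gamma}(n-\beta)}\int_0^\infty \partial_t^{n+m} \mathcal{P}_{t+s+\tau}(f)\,\tau^{n-\beta-1}d\tau,
\]
with $L^p_\mathbb{B}$-norm of order $(t+s)^{-(m+\beta)}$. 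Multiplied by $s^{m-\alpha-1}$ and integrated on $(0,\infty)$ this is finite because $m > \alpha$, so \eqref{02.8} indeed defines an element of $L^p_\mathbb{B}(\Omega)$.

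To prove \eqref{02.6}, I substitute the preceding display into \eqref{02.8}. The joint integrand in $(s,\tau)$ is bounded, up to a constant, by $\tau^{n-\beta-1}\,s^{m-\alpha-1}(t+s+\tau)^{-(n+m)}$, which is absolutely integrable, so Fubini applies. Changing variables via $w = s + \tau$ with $\tau$ held fixed and then swapping the order of integration transforms the expression into
\[
\frac{e^{-i\pi(n+m-\alpha-\beta)}}{\mathbf{\Gamma}(m-\alpha)\mathbf{\Gamma}(n-\beta)}\int_0^\infty \partial_t^{n+m}\mathcal{P}_{t+w}(f)\Bigl(\int_0^w (w-\tau)^{m-\alpha-1}\tau^{n-\beta-1}\,d\tau\Bigr) dw.
\]
The inner integral equals $\mathbf{B}(m-\alpha,\,n-\beta)\,w^{n+m-\alpha-\beta-1}$, and the identity $\mathbf{B}(m-\alpha,n-\beta) = \mathbf{\Gamma}(m-\alpha)\mathbf{\Gamma}(n-\beta)/\mathbf{\Gamma}(n+m-\alpha-\beta)$ reduces the whole expression to exactly the formula that Theorem \ref{thm2.2}, applied with $\gamma = n+m$ to the order $\alpha+\beta$, provides for $\partial_t^{\alpha+\beta}\mathcal{P}_t f$; this establishes \eqref{02.6}.

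The main obstacle is analytic rather than algebraic: one has to justify carefully the two interchanges (differentiation under the integral and Fubini), and reconcile the notational identification $\partial_t^m = \partial_{t+s}^m$ on functions of $t+s$. Both reductions rest on the uniform bound from \eqref{02.2}, which supplies just enough integrability in $\tau$ and $s$ to legitimise every step, after which the Beta–Gamma identity does the bookkeeping.
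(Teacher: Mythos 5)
Your proof is correct and follows essentially the same route as the paper: substitute the integer-order representation of $\partial_{t+s}^\beta\mathcal{P}_{t+s}f$, justify differentiating under the integral via the estimate from \eqref{02.2}, apply Fubini and a change of variables, and collapse the Beta factor. The one small improvement over the paper's argument is at the very end: the paper re-derives the needed identity by splitting into two cases according to whether $m+k$ is or is not the smallest integer exceeding $\alpha+\beta$ and integrating by parts once in the second case, whereas you simply invoke Theorem \ref{thm2.2} with $\gamma = n+m$ to identify the result as $\partial_t^{\alpha+\beta}\mathcal{P}_t f$, which cleanly avoids repeating that case analysis.
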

\begin{proof}
For any $f\in L_{\mathbb{B}}^p \left(\Omega\right)$ for some $1\le
p\le \infty,$ by \eqref{2.1} and Theorem \ref{thm2.1} we have the
following computation for the latter of \eqref{02.8}:
\begin{multline}\label{02.9}
\frac{e^{-i\pi(m-\alpha)}}{\mathbf{\Gamma}(m-\alpha)}\int_0^\infty{\partial_t^m}
    \left(\partial_{t+s}^\beta\mathcal{P}_{t+s}f\right)s^{m-\alpha-1}ds\\
 =\frac{e^{-i\pi(m+k-\alpha-\beta)}}{\mathbf{\Gamma}(m-\alpha)\mathbf{\Gamma}(k-\beta)}\int_0^\infty{\partial_t^m}
    \left(\int_0^\infty\partial_{t+s}^k
    \mathcal{P}_{t+s+u}(f)u^{k-\beta-1}du\right)s^{m-\alpha-1}ds,
\end{multline}
where $k$ is the smallest integer which is bigger than $\beta$. For
any fixed $s \in (0,\infty),$  $t\in (t_0-\varepsilon,
t_0+\varepsilon)\subset (0, \infty)$ for some $t_0\in (0, \infty),$
and $\varepsilon>0$, by \eqref{02.3} we have
\begin{multline}\label{02.11}
\left\|\partial_t^m\left({\partial_{t+s}^k}
        \mathcal{P}_{t+s+u}(f)u^{k-\beta-1}\right)\right\|_{L_{\mathbb{B}}^p(\Omega)}
   =\left\|\partial_t^{m+k} \mathcal{P}_{t+s+u}(f)\right\|_{L_{\mathbb{B}}^p(\Omega)}u^{k-\beta-1}\\
   \leq\frac{C}{(t+s+u)^{m+k}}u^{k-\beta-1}\left\|f\right\|_{L_{\mathbb{B}}^p(\Omega)}
   \leq\frac{C}{(t_0-\varepsilon+s+u)^{m+k}}u^{k-\beta-1}\left\|f\right\|_{L_{\mathbb{B}}^p(\Omega)},
\end{multline}
for any $1\leq p\leq \infty.$ And
\begin{align}\label{02.10}
 &\int_0^\infty\abs{ \frac{u^{k-\beta-1}}{(t_0-\varepsilon+s+u)^{m+k}}}du\left\|
       f\right\|_{L_{\mathbb{B}}^p(\Omega)}\nonumber\\
 &=\left(\int_0^{t_0-\varepsilon+s}\abs{\frac{u^{k-\beta-1}}{(t_0-\varepsilon+s+u)^{m+k}}}du+
     \int_{t_0-\varepsilon+s}^\infty\abs{\frac{u^{k-\beta-1}}{(t_0-\varepsilon+s+u)^{m+k}}}du\right)
      \left\|f\right\|_{L_{\mathbb{B}}^p(\Omega)}\\
 &\leq C\frac{1}{(t_0-\varepsilon+s)^{\beta+m}}\left\|f\right\|_{L_{\mathbb{B}}^p(\Omega)}
 <\infty.\nonumber
 \end{align}
Combining \eqref{02.11} and \eqref{02.10}, we know that
$\displaystyle
\partial_t^m\left({\partial_{t+s}^k}\mathcal{P}_{t+s+u}(f)u^{k-\beta-1}\right)$
is controlled by an integrable function. Hence we can interchange
the order of the inner integration and the partial derivative
$\partial_t^m$ in \eqref{02.9} to obtain
\begin{align}\label{02.12}
&\frac{e^{-i\pi(m-\alpha)}}{\mathbf{\Gamma}(m-\alpha)}\int_0^\infty{\partial_t^m}
    \left(\partial_{t+s}^\beta\mathcal{P}_{t+s}f\right)s^{m-\alpha-1}ds\nonumber\\
&=\frac{e^{-i\pi(m+k-\alpha-\beta)}}{\mathbf{\Gamma}(m-\alpha)\mathbf{\Gamma}(k-\beta)}\int_0^\infty
    \int_0^\infty{\partial_t^m}\partial_{t+s}^k
    \mathcal{P}_{t+s+u}(f)u^{k-\beta-1}dus^{m-\alpha-1}ds\nonumber\\
&=\frac{e^{-i\pi(m+k-\alpha-\beta)}}{\mathbf{\Gamma}(m-\alpha)\mathbf{\Gamma}(k-\beta)}\int_0^\infty
    \int_0^\infty{\partial_t^{m+k}}\mathcal{P}_{t+s+u}(f)u^{k-\beta-1}dus^{m-\alpha-1}ds\nonumber\\
&=\frac{e^{-i\pi(m+k-\alpha-\beta)}}{\mathbf{\Gamma}(m-\alpha)\mathbf{\Gamma}(k-\beta)}\int_0^\infty
    \int_s^\infty{\partial_t^{m+k}}\mathcal{P}_{t+w}(f)(w-s)^{k-\beta-1}dws^{m-\alpha-1}ds\\
&=\frac{e^{-i\pi(m+k-\alpha-\beta)}}{\mathbf{\Gamma}(m-\alpha)\mathbf{\Gamma}(k-\beta)}\int_0^\infty
    \int_0^w{\partial_t^{m+k}}\mathcal{P}_{t+w}(f)(w-s)^{k-\beta-1}s^{m-\alpha-1}dsdw\nonumber\\
&=\frac{e^{-i\pi(m+k-\alpha-\beta)}\mathbf{B}(m-\alpha,k-\beta)}{\mathbf{\Gamma}(m-\alpha)\mathbf{\Gamma}(k-\beta)}
    \int_0^\infty {\partial_t^{m+k}}\mathcal{P}_{t+w}(f)w^{k+m-\alpha-\beta-1}dw\nonumber\\
&=\frac{e^{-i\pi(m+k-\alpha-\beta)}}{\mathbf{\Gamma}(m+k-\alpha-\beta)}
    \int_0^\infty {\partial_t^{m+k}}\mathcal{P}_{t+w}(f)w^{k+m-\alpha-\beta-1}dw\nonumber.
\end{align}
Since $m-1\leq \alpha<m$ and $k-1\leq \beta <k,$ $m+k-2\leq
\alpha+\beta <m+k.$ If $m+k-1\leq \alpha+\beta <m+k,$ we have
\begin{equation}\label{02.14}
\frac{e^{-i\pi(m+k-\alpha-\beta)}}{\mathbf{\Gamma}(m+k-\alpha-\beta)}\int_0^\infty
{\partial_t^{m+k}}\mathcal{P}_{t+w}(f)w^{k+m-\alpha-\beta-1}dw
={\partial_t^{\alpha+\beta}}\mathcal{P}_tf.
\end{equation}
If $m+k-2\leq \alpha+\beta <m+k-1,$ then integrating by parts, we
get
\begin{multline}\label{02.13}
\frac{e^{-i\pi(m+k-\alpha-\beta)}}{\mathbf{\Gamma}(m+k-\alpha-\beta)}
    \int_0^\infty {\partial_t^{m+k}}\mathcal{P}_{t+w}(f)w^{k+m-\alpha-\beta-1}dw\\
=\frac{e^{-i\pi(m+k-1-\alpha-\beta)}}{\mathbf{\Gamma}(m+k-1-\alpha-\beta)}
    \int_0^\infty
    {\partial_t^{m+k-1}}\mathcal{P}_{t+w}(f)w^{k+m-\alpha-\beta-2}dw= {\partial_t^{\alpha+\beta}}\mathcal{P}_tf.
\end{multline}
So, combining \eqref{02.9} and \eqref{02.12}--\eqref{02.14}, we get
\begin{equation*}\label{02.15}
\frac{e^{-i\pi(m-\alpha)}}{\mathbf{\Gamma}(m-\alpha)}\int_0^\infty{\partial_t^m}
    \left(\partial_{t+s}^\beta\mathcal{P}_{t+s}f\right)s^{m-\beta-1}ds
    =\partial_t^{\alpha+\beta}\mathcal{P}_{t}f,
\end{equation*}
for any $\displaystyle f\in \bigcup\limits_{1\leq p\leq
\infty}L_{\mathbb{B}}^p \left(\Omega\right).$
\end{proof}

Write the spectral decomposition of the semigroup $\{\mathcal
{P}_t\}_{t\geq 0}$: for any $f\in L^2(\Omega)$
$$\mathcal {P}_tf=\int_0^\infty {e}^{-\lambda t}dE_f({\lambda}),$$
where ${E(\lambda})$ is a resolution of the identity. Thus
\begin{equation}\label{2.23}
{\partial_t^k}\mathcal{P}_tf=e^{-i\pi k}\int_{0^+}^\infty \lambda^k
e^{-\lambda t}dE_f({\lambda}), \quad k=1,2,\ldots.
 \end{equation}
 We have the
following proposition.
\begin{prop}
\label{prop1} Let $f\in L^2(\Omega)$ and $0<\alpha<\infty.$ We have
\begin{equation}\label{2.25}
\partial_t^\alpha \mathcal{P}_tf=e^{-i\pi\alpha} \int_{0^+}^\infty \lambda^\alpha
        e^{-\lambda t}  dE_f({\lambda}).
\end{equation}
\end{prop}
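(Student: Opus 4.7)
The strategy is to start from the defining formula \eqref{2.1}, replace the integer‐order derivative $\partial_t^m\mathcal{P}_{t+s}f$ by its spectral representation \eqref{2.23}, and then interchange the temporal integration in $s$ with the spectral integration in $\lambda$. Let $m$ be the smallest integer strictly exceeding $\alpha$, so that \eqref{2.1} gives
\begin{equation*}
\partial_t^\alpha \mathcal{P}_t f
= \frac{e^{-i\pi(m-\alpha)}}{\mathbf{\Gamma}(m-\alpha)}
  \int_0^\infty \partial_t^m \mathcal{P}_{t+s}f \, s^{m-\alpha-1} ds.
\end{equation*}
By \eqref{2.23}, $\partial_t^m \mathcal{P}_{t+s}f = e^{-i\pi m}\int_{0^+}^\infty \lambda^m e^{-\lambda(t+s)}\,dE_f(\lambda)$, so the plan is to plug this expression in and arrive, after a Fubini–type argument, at
\begin{equation*}
\partial_t^\alpha \mathcal{P}_t f
= \frac{e^{-i\pi(m-\alpha)}e^{-i\pi m}}{\mathbf{\Gamma}(m-\alpha)}
  \int_{0^+}^\infty \lambda^m e^{-\lambda t} \left(\int_0^\infty s^{m-\alpha-1}e^{-\lambda s}ds\right)dE_f(\lambda).
\end{equation*}

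The inner integral in $s$ is a standard Gamma integral equal to $\mathbf{\Gamma}(m-\alpha)\lambda^{-(m-\alpha)}$, which cancels the Gamma factor out front and leaves the exponent $\lambda^\alpha$. Combining the two phase factors $e^{-i\pi(m-\alpha)}e^{-i\pi m}$ and reducing modulo $2\pi$ yields the desired scalar in front of $\int_{0^+}^\infty \lambda^\alpha e^{-\lambda t} dE_f(\lambda)$.

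The main obstacle is justifying the exchange of the $s$-integral (a Bochner integral of $L^2(\Omega)$-valued functions) with the spectral integral. I would handle this by testing against an arbitrary $g\in L^2(\Omega)$: the map $s\mapsto \langle \partial_t^m\mathcal{P}_{t+s}f,g\rangle$ equals $e^{-i\pi m}\int_{0^+}^\infty \lambda^m e^{-\lambda(t+s)}d\langle E_f(\lambda),g\rangle$, where $\mu_{f,g}(\lambda):=d\langle E_f(\lambda),g\rangle$ is a complex Borel measure of total variation at most $\|f\|_2\|g\|_2$. The integrability needed for Fubini is the finiteness of
\begin{equation*}
\int_0^\infty\!\!\int_{0^+}^\infty \lambda^m e^{-\lambda(t+s)} s^{m-\alpha-1} \,d|\mu_{f,g}|(\lambda)\,ds,
\end{equation*}
which, after performing the $s$-integral, equals $\mathbf{\Gamma}(m-\alpha)\int_{0^+}^\infty \lambda^\alpha e^{-\lambda t}d|\mu_{f,g}|(\lambda)$. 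The latter is finite because $\sup_{\lambda>0}\lambda^\alpha e^{-\lambda t/2}<\infty$ and $e^{-\lambda t/2}\in L^2(d|\mu_{f,g}|)$ by finiteness of the total variation. Once Fubini is justified for every $g$, the weak-sense identity upgrades to an identity in $L^2(\Omega)$, and the proposition follows.
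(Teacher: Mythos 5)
Your proposal follows essentially the same route as the paper's proof: substitute the spectral representation \eqref{2.23} of the integer derivative into the defining formula \eqref{2.1}, justify the $s$--$\lambda$ interchange by pairing against an arbitrary $g\in L^2(\Omega)$ (so that one is dealing with a complex Borel measure of finite total variation), and evaluate the inner $s$-integral as a Gamma integral. One small point worth tracking explicitly rather than waving off: the combined phase is $e^{-i\pi(m-\alpha)}e^{-i\pi m}=e^{-2\pi i m}e^{i\pi\alpha}=e^{i\pi\alpha}$, which disagrees with the $e^{-i\pi\alpha}$ appearing in \eqref{2.25} unless $\alpha\in\mathbb{Z}$ (the paper's own display \eqref{2.24} silently makes the same substitution, and indeed $e^{i\pi\alpha}$ is what matches the remark in the introduction that $\partial_t^\alpha\mathcal{P}_tf=e^{i\pi\alpha}(-\Delta)^{\alpha/2}\mathcal{P}_tf$); you should state the resulting phase rather than assert it ``yields the desired scalar.''
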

\begin{proof}
 By \eqref{2.1}
and \eqref{2.23}, we have
\begin{equation}\label{2.24}
\partial_t^{\alpha}\mathcal{P}_tf
  =\frac{e^{-i\pi\alpha}}{\mathbf{\Gamma}(k-\alpha)}\int_0^\infty
      \int_{0^+}^\infty\lambda^k
      e^{-(t+s)\lambda}dE_f({\lambda})s^{k-\alpha-1}ds,
\end{equation}
where $k$ is the smallest integer which is bigger than $\alpha.$
Then $\displaystyle \int_0^\infty \int_{0^+}^\infty\lambda^k
~e^{-(t+s)\lambda}\left|dE({\lambda})\right| s^{k-\alpha-1}ds$ is
absolutely convergent. And by Theorem \ref{thm2.1}, we know that the
integral in \eqref{2.1} is absolutely convergent in $L^2(\Omega).$
So by \eqref{2.24}, we get
\begin{align*}
\left\langle
\partial_t^{\alpha}\mathcal{P}_tf,\ g\right\rangle
&=  \left\langle
\frac{e^{-i\pi\alpha}}{\mathbf{\Gamma}(k-\alpha)}\int_0^\infty
     \int_{0^+}^\infty\lambda^k e^{-(t+s)\lambda}dE_f({\lambda})s^{k-\alpha-1}ds,\
     g\right\rangle \nonumber\\
&= \frac{e^{-i\pi\alpha}}{\mathbf{\Gamma}(k-\alpha)}
\int_0^\infty\left\langle
     \int_{0^+}^\infty\lambda^k e^{-(t+s)\lambda}dE_f({\lambda}),\
     g\right\rangle s^{k-\alpha-1}ds \nonumber\\
&= \frac{e^{-i\pi\alpha}}{\mathbf{\Gamma}(k-\alpha)} \int_0^\infty
     \int_{0^+}^\infty\lambda^k ~e^{-(t+s)\lambda}dE_{\langle f, g\rangle}(\lambda) s^{k-\alpha-1}ds \nonumber\\
&= \frac{e^{-i\pi\alpha}}{\mathbf{\Gamma}(k-\alpha)}
\int_{0^+}^\infty
     \int_0^\infty\lambda^k ~e^{-(t+s)\lambda}s^{k-\alpha-1}dsdE_{\langle f, g\rangle}(\lambda) \nonumber\\
&= \left\langle e^{-i\pi\alpha}\int_{0^+}^\infty  \lambda^\alpha
~e^{-t\lambda}dE_f({\lambda}),\  g\right\rangle, \quad \forall g\in
L^2(\Omega). \nonumber
\end{align*}
Hence we get \eqref{2.25}.
\end{proof}

\section{ Technical Results for Littlewood--Paley $g$-function}\label{tech g-function}

In this section, we will give some properties of the fractional
Littlewood--Paley $g$-function.
\begin{prop}\label{prop3}
Given a Banach space $\mathbb{B}$, $1< q<\infty,$ and
$0<\beta<\gamma,$ there exists a constant $C$ such that
\begin{equation}\label{02.16}
g_\beta^q(f)\leq Cg_\gamma^q(f), \quad \forall
f\in\bigcup\limits_{1\leq p\leq \infty}L_{\mathbb{B}}^p
\left(\Omega\right).
\end{equation}
\end{prop}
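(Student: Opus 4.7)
The plan is to use the representation of $\partial_t^\beta\mathcal{P}_tf$ in terms of $\partial_t^\gamma\mathcal{P}_tf$ provided by Theorem \ref{thm2.2}, and then a change of variables plus Minkowski's integral inequality to obtain the pointwise domination of $g_\beta^q$ by $g_\gamma^q$.

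First, I would fix $f\in L_{\mathbb{B}}^p(\Omega)$ for some $1\leq p\leq\infty$ and an arbitrary $x\in\Omega$. By Theorem \ref{thm2.2},
\begin{equation*}
\partial_t^\beta\mathcal{P}_tf(x)
=\frac{e^{-i\pi(\gamma-\beta)}}{\mathbf{\Gamma}(\gamma-\beta)}\int_0^\infty\partial_u^\gamma\mathcal{P}_{u}f(x)\Big|_{u=t+s}s^{\gamma-\beta-1}\,ds,
\end{equation*}
so taking the $\mathbb{B}$-norm inside the Bochner integral and multiplying by $t^\beta$ gives
\begin{equation*}
t^\beta\bigl\|\partial_t^\beta\mathcal{P}_tf(x)\bigr\|_\mathbb{B}
\leq \frac{1}{\mathbf{\Gamma}(\gamma-\beta)}\int_0^\infty t^\beta\bigl\|\partial_u^\gamma\mathcal{P}_{u}f(x)\bigr\|_\mathbb{B}\Big|_{u=t+s} s^{\gamma-\beta-1}\,ds.
\end{equation*}
The key is now to rescale in a way that makes the $L^q(\mathbb{R}^+,dt/t)$ norm on the left look like the corresponding norm of $G(u,x):=u^\gamma\|\partial_u^\gamma\mathcal{P}_uf(x)\|_\mathbb{B}$ on the right. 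Setting $u=t/\tau$ with $\tau\in(0,1)$ (equivalently $s=t(1-\tau)/\tau$), a routine computation of the Jacobian and tracking of powers of $t$ and $\tau$ turns the right-hand side into
\begin{equation*}
t^\beta\bigl\|\partial_t^\beta\mathcal{P}_tf(x)\bigr\|_\mathbb{B}
\leq \frac{1}{\mathbf{\Gamma}(\gamma-\beta)}\int_0^1 \tau^{\beta-1}(1-\tau)^{\gamma-\beta-1}\,G(t/\tau,x)\,d\tau,
\end{equation*}
where the crucial point is that all the explicit $t$-factors cancel and leave only $G(t/\tau,x)$.

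Next, I would apply Minkowski's integral inequality in the variable $t$ with respect to the measure $dt/t$, which yields
\begin{equation*}
g_\beta^q(f)(x)\leq \frac{1}{\mathbf{\Gamma}(\gamma-\beta)}\int_0^1 \tau^{\beta-1}(1-\tau)^{\gamma-\beta-1}\left(\int_0^\infty G(t/\tau,x)^q\,\frac{dt}{t}\right)^{1/q}\,d\tau.
\end{equation*}
Since $dt/t$ is invariant under the dilation $t\mapsto t/\tau$, the inner integral equals $g_\gamma^q(f)(x)^q$, and the remaining $\tau$-integral is precisely the Beta function $\mathbf{B}(\beta,\gamma-\beta)$, giving
\begin{equation*}
g_\beta^q(f)(x)\leq \frac{\mathbf{B}(\beta,\gamma-\beta)}{\mathbf{\Gamma}(\gamma-\beta)}\,g_\gamma^q(f)(x)=\frac{\mathbf{\Gamma}(\beta)}{\mathbf{\Gamma}(\gamma)}\,g_\gamma^q(f)(x),
\end{equation*}
which is the required inequality with $C=\mathbf{\Gamma}(\beta)/\mathbf{\Gamma}(\gamma)$.

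The calculations are routine once the setup is in place; the only delicate point, and the step where bookkeeping matters, is choosing the right substitution $u=t/\tau$ so that the measure $dt/t$ on the left matches the measure $du/u$ on the right, and so that the explicit power of $t$ disappears entirely, leaving a pure convolution-type kernel in $\tau$ on $(0,1)$. Any other change of variable (for instance $s=t\sigma$) also works but produces extra $t$-factors that have to be absorbed; the $u=t/\tau$ choice is cleanest because it turns the representation into a scaling-invariant average of $G(\cdot,x)$.
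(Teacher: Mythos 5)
Your proof is correct and arrives at the same sharp constant $\mathbf{\Gamma}(\beta)/\mathbf{\Gamma}(\gamma)$ as the paper, but by a genuinely different route. The paper starts from the same representation (via Theorem \ref{thm2.2}) but then applies H\"older's inequality to $\int_t^\infty \|\partial_s^\gamma\mathcal{P}_sf\|_\mathbb{B}(s-t)^{\gamma-\beta-1}\,ds$ with a carefully engineered splitting --- inserting the weights $s^{\gamma(q-1)}$ and $s^{-\gamma}$ --- and then uses Fubini's theorem to swap the $t$- and $s$-integrals, after which the inner $t$-integral reduces to a Beta function. Your approach instead substitutes $u=t/\tau$ \emph{first}, which cancels the explicit power of $t$ and exposes the estimate as a multiplicative-convolution (i.e.\ $dt/t$-dilation-invariant) average of $G(u,x)=u^\gamma\|\partial_u^\gamma\mathcal{P}_uf(x)\|_\mathbb{B}$ against the probability-type kernel $\tau^{\beta-1}(1-\tau)^{\gamma-\beta-1}/\mathbf{B}(\beta,\gamma-\beta)$ on $(0,1)$; Minkowski's integral inequality then does all the work in one stroke because $dt/t$ is invariant under $t\mapsto t/\tau$. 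Your version is structurally cleaner and more transparently explains why the constant is $\mathbf{\Gamma}(\beta)/\mathbf{\Gamma}(\gamma)$ with no loss; the paper's H\"older-plus-Fubini version is more mechanical but perhaps more familiar to readers who have not internalized the multiplicative-convolution viewpoint. One cosmetic slip: after the final display with the $1/q$ power already applied, you say ``the inner integral equals $g_\gamma^q(f)(x)^q$'' --- that is correct for the integral before the $1/q$-th root is taken, so the factor coming out of the display is $g_\gamma^q(f)(x)$, as your final line correctly records.
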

\begin{proof}
Assume that $f\in L_{\mathbb{B}}^p \left(\Omega\right)$ for some
$1\leq p\leq \infty.$ By Theorem \ref{thm2.2} and H\"{o}lder's
inequality, we have
\begin{align*}
&\big\|\partial_t^\beta \mathcal {P}_tf\big\|_\mathbb{B} \leq
\frac{1}{\mathbf{\Gamma}(\gamma-\beta)}\int_t^\infty
\big\|\partial_s^\gamma
         \mathcal {P}_sf\big\|_\mathbb{B}(s-t)^{\gamma-\beta-1}ds \\
   &\leq \frac{1}{\mathbf{\Gamma}(\gamma-\beta)}\left(\int_t^\infty \big\|\partial_s^\gamma
          \mathcal{P}_sf\big\|_\mathbb{B}^q(s-t)^{\gamma-\beta-1}s^{\gamma(q-1)}ds\right)^{\frac{1}{q}}
         \left(\int_t^\infty(s-t)^{\gamma-\beta-1}s^{-\gamma}ds\right)^{\frac{1}{q'}}.
\end{align*}
By changing variables, we have
\begin{align*}
\int_t^\infty(s-t)^{\gamma-\beta-1}s^{-\gamma}ds
   &= \int_t^\infty \left(1-\frac{t}{s}\right)^{\gamma-\beta-1}
        \left(\frac{t}{s}\right)^{\beta+1} t^{-\beta-1}ds   \\
   &= t^{-\beta}\int_0^1 (1-u)^{\gamma-\beta-1}u^{\beta-1}du =t^{-\beta}\mathbf{B}(\gamma-\beta, \beta).
\end{align*}
So we have
\begin{align}\label{2.11}
   \left\|\partial_t^\beta \mathcal
          {P}_tf\right\|_\mathbb{B}
   &\leq\frac{1}{\mathbf{\Gamma}(\gamma-\beta)}
          \left(t^{-\beta}\mathbf{B}(\gamma-\beta,\beta)\right)^{\frac{1}{q'}}\left(\int_t^\infty
          \big\|\partial_s^\gamma \mathcal{P}_sf\big\|_\mathbb{B}^q(s-t)^
          {\gamma-\beta-1}s^{\gamma(q-1)}ds\right)^{\frac{1}{q}}\\
   &= \frac{(\mathbf{B}(\gamma-\beta,\beta))^{\frac{1}{q'}}}{\mathbf{\Gamma}(\gamma-\beta)}
           \left(t^{-\beta}\right)^{\frac{1}{q'}}
          \left(\int_t^\infty \left\|\partial_s^\gamma \mathcal {P}_sf
           \right\|_\mathbb{B}^q(s-t)^{\gamma-\beta-1}s^{\gamma(q-1)}ds\right)^{\frac{1}{q}}.\nonumber
\end{align}
Using Fubini's theorem, by \eqref{2.11} we get
\begin{align*}
\int_0^\infty \left\|t^\beta\partial_t^\beta \mathcal
          {P}_tf\right\|_\mathbb{B}^q\frac{dt}{t}
   &\leq \frac{(\mathbf{B}(\gamma-\beta,\beta))^{\frac{q}{q'}}}{\mathbf{\Gamma}(\gamma-\beta)^q}
           \int_0^\infty t^{\beta q}\left(t^{-\beta}\right)^{\frac{q}{q'}}
          \int_t^\infty \norm{\partial_s^\gamma \mathcal {P}_sf}_\mathbb{B}^q
          (s-t)^{\gamma-\beta-1}s^{\gamma (q-1)}ds\frac{dt}{t}\nonumber\\
   &=    \frac{(\mathbf{B}(\gamma-\beta,\beta))^{q-1}}{\mathbf{\Gamma}(\gamma-\beta)^q}
          \int_0^\infty s^{\gamma (q-1)}\norm{\partial_s^\gamma \mathcal {P}_sf}_\mathbb{B}^q
         \int_0^s t^{\beta-1}(s-t)^{\gamma-\beta-1}dtds\nonumber\\
   &= \left(\frac{\mathbf{\Gamma}(\beta)}{\mathbf{\Gamma}(\gamma)}\right)^q
          \int_0^\infty \big\|s^\gamma\partial_s^\gamma \mathcal {P}_sf\big\|_\mathbb{B}^q\frac{ds}{s}.
\end{align*}
Hence we get the inequality \eqref{02.16} with the constant
$\displaystyle
C=\frac{\mathbf{\Gamma}(\beta)}{\mathbf{\Gamma}(\gamma)}$.
\end{proof}

In the following, we  shall need the theory of Calder\'{o}n--Zygmund
on $\mathbb{R}^n.$ So we should recall briefly the definition of the
Calder\'{o}n--Zygmund operator. Given two Banach spaces
$\mathbb{B}_1$ and $\mathbb{B}_2,$ let $T$ be a linear operator.
Then we call that $T$ is a Calder\'{o}n--Zygmund operator on
$\mathbb{R}^n,$ with associated Calder\'{o}n--Zygmund kernel $K$ if
$T$ maps $L_{c, \mathbb{B}_1}^{\infty},$ the space of the
essentially bounded $\mathbb{B}_1$-valued functions on
$\mathbb{R}^n$ with compact support, into the space of
$\mathbb{B}_2$-valued and strongly measurable functions on
$\mathbb{R}^n,$ and for any function $f\in L_{c,
\mathbb{B}_1}^{\infty}$ we have
$$Tf(x)=\int_{\mathbb{R}^n} K(x,y)f(y)dy, \quad \hbox{a.e.}\ x\in \mathbb{R}^n\
\hbox{outside the support of}\ f,$$ where the kernel $K(x, y)$ is a
regular kernel, that is, $K(x,y)\in \mathcal {L}\left(\mathbb{B}_1,
\mathbb{B}_2\right)$ satisfies $\displaystyle
\left\|K(x,y)\right\|\leq C\frac{1}{|x-y|^n}$ and
     $\displaystyle \left\|\bigtriangledown_x K(x, y)\right\|
             +\left\|\bigtriangledown_y K(x, y)\right\|\leq C\frac{1}{|x-y|^{n+1}}, \ \hbox{for any } x,y\in \mathbb{R}^n \hbox{ and } x\neq y,$
where as usual $\bigtriangledown_x=\left({\partial_{x_1}}, \cdots,
{\partial_{x_n}}\right)$.

Let us recall the $\mathbb{B}$-valued $BMO$ and $H^1$ spaces on
$\mathbb{R}^n.$ It is well known that
\begin{equation*}
BMO_{\mathbb{B}}(\mathbb{R}^n)=\set{f\in L_{\mathbb{B},
\textup{loc}}^1(\mathbb{R}^n): \sup_{\hbox{cubes } Q\subset
\mathbb{R}^n}\frac{1}{|Q|}\int_Q\left\|f(x)-\frac{1}{|Q|}\int_Qf(y)dy\right\|_{\mathbb{B}}dx<\infty}.
\end{equation*}
 The $\mathbb{B}$-valued $H^1$ space is defined in the
atomic sense. We say that a function $\displaystyle a\in
L_{\mathbb{B}}^\infty (\mathbb{R}^n)$ is a $\mathbb{B}$-valued atom
if there exists a cube $Q\subset\mathbb{R}^n$ containing the support
of $a,$ and  such that $\|a\|_{L_{\mathbb{B}}^\infty
(\mathbb{R}^n)}\leq |Q|^{-1}$ and $\displaystyle \int_Qa(x)dx=0.$
Then, we can define $H_{\mathbb{B}}^1\left(\mathbb{R}^n\right)$ as
\begin{equation*}
H_{\mathbb{B}}^1\left(\mathbb{R}^n\right)=\set{f: f=\sum_i\lambda_i
a_i,\ a_i \hbox{ are } \mathbb{B}\hbox{-valued atoms and }
\sum_i|\lambda_i|<\infty}.
\end{equation*}
 We define $\displaystyle
\|f\|_{H_\mathbb{B}^1\left(\mathbb{R}^n\right)}=\inf\Big\{\sum_i
|\lambda_i|\Big\},$ where the infimum runs over all those such
decompositions.

\begin{rem}{\cite[Theorem
4.1]{MTX}}\label{CZ} Given a pair of Banach spaces $\mathbb{B}_1$
and $\mathbb{B}_2,$ let $T$ be a Calder\'{o}n--Zygmund operator on
$\mathbb{R}^n$ with regular vector-valued kernel. Then the following
statements are equivalent:
\begin{enumerate}
\item[(i)] $T$ maps $L_{c, \mathbb{B}_1}^\infty(\mathbb{R}^n)$
into $BMO_{\mathbb{B}_2}(\mathbb{R}^n).$
\item[(ii)] $T$ maps $H_{\mathbb{B}_1}^1(\mathbb{R}^n)$ into
    $L^1_{\mathbb{B}_2}(\mathbb{R}^n).$
\item[(iii)] $T$ maps $L_{\mathbb{B}_1}^p(\mathbb{R}^n)$
into $L^p_{\mathbb{B}_2}(\mathbb{R}^n)$ for any (or, equivalently,
for some) $p\in (1, \infty).$
\item[(iv)] $T$ maps $BMO_{c,
\mathbb{B}_1}(\mathbb{R}^n)$ into
$BMO_{\mathbb{B}_2}(\mathbb{R}^n).$
\item[(v)] $T$ maps
$L_{\mathbb{B}_1}^1(\mathbb{R}^n)$ into $L^{1,
\infty}_{\mathbb{B}_2}(\mathbb{R}^n).$
\end{enumerate}
\end{rem}

\begin{prop}
\label{prop5} Given a Banach space $\mathbb{B}$, $1< q<\infty,$ and
$0<\alpha<\infty$,  $g_\alpha^q(f)$ can be expressed as an
$L^q_\mathbb{B}(\mathbb{R}_+,\frac{dt}{t})$-norm of a
Calder\'{o}n--Zygmund operator on $\mathbb{R}^n$ with regular
vector-valued kernel.
\end{prop}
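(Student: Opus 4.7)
The plan is to introduce the operator
\[
Tf(x)(t) := t^\alpha \partial_t^\alpha \mathcal{P}_t f(x), \qquad x \in \mathbb{R}^n,\ t>0,
\]
and to view it as sending $\mathbb{B}$-valued functions on $\mathbb{R}^n$ into $\mathbb{B}_2$-valued functions, where $\mathbb{B}_2 := L^q_\mathbb{B}(\mathbb{R}_+, dt/t)$. By construction, $g_\alpha^q(f)(x) = \|Tf(x)\|_{\mathbb{B}_2}$, so it will suffice to exhibit $T$ as a Calder\'on--Zygmund operator with a regular vector-valued kernel in the sense recalled before Remark~\ref{CZ}.

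Since on $\mathbb{R}^n$ the Poisson semigroup $\mathcal{P}_t$ is convolution with the classical Poisson kernel $P_t(x) = c_n t(t^2+|x|^2)^{-(n+1)/2}$, the definition \eqref{2.1} of $\partial_t^\alpha$ identifies the associated kernel of $T$ as the scalar-in-$t$ multiplier
\[
K(x,y)(t) = t^\alpha \partial_t^\alpha P_t(x-y),
\]
acting on $\mathbb{B}$ by scalar multiplication into $\mathbb{B}_2$.

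The technical heart of the proof is establishing pointwise estimates of the form
\[
\left|\partial_t^\alpha P_t(x)\right| \leq \frac{C_\alpha}{(t+|x|)^{n+\alpha}}, \qquad \left|\nabla_x \partial_t^\alpha P_t(x)\right| \leq \frac{C_\alpha}{(t+|x|)^{n+\alpha+1}}.
\]
I would derive them by starting from the classical integer-order bounds $|\partial_t^m P_t(x)| \lesssim (t+|x|)^{-(n+m)}$ and $|\nabla_x \partial_t^m P_t(x)| \lesssim (t+|x|)^{-(n+m+1)}$ for each nonnegative integer $m$ (which are standard for the Poisson kernel), inserting them into \eqref{2.1}, and performing the change of variables $s = r(t+|x|)$ to isolate the decay factor via a convergent Beta-type integral.

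Once these pointwise bounds are in hand, the Calder\'on--Zygmund size and regularity conditions follow by elementary substitutions. Indeed, the change of variable $u = t/|x-y|$ yields
\[
\|K(x,y)\|^q_{L^q(\mathbb{R}_+,dt/t)} \leq C \int_0^\infty \frac{t^{\alpha q - 1}}{(t+|x-y|)^{(n+\alpha)q}}\,dt = \frac{C'}{|x-y|^{nq}},
\]
so $\|K(x,y)\|_{\mathcal{L}(\mathbb{B}, \mathbb{B}_2)} \leq C |x-y|^{-n}$; the same substitution applied to the gradient estimate produces $\|\nabla_x K(x,y)\|_{\mathcal{L}(\mathbb{B},\mathbb{B}_2)} + \|\nabla_y K(x,y)\|_{\mathcal{L}(\mathbb{B},\mathbb{B}_2)} \leq C|x-y|^{-n-1}$. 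The main obstacle is the uniform pointwise control of $\partial_t^\alpha P_t$ and its spatial gradient from the integer-order estimates; once these are established, verification of the kernel conditions is routine.
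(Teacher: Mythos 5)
Your proposal is correct and follows essentially the same route as the paper: both exhibit the kernel $K(x,y)(t)=t^\alpha\partial_t^\alpha P_t(x-y)$ and verify the Calder\'on--Zygmund size and regularity conditions by first reducing the fractional derivative to integer derivatives via the Segovia--Wheeden formula and then computing the $L^q(\mathbb{R}_+,dt/t)$ norms by scaling substitutions. The paper merely states the kernel and the resulting bounds, leaving the details to the reader with a citation to \cite{JorFRTT}; your write-up supplies exactly those omitted pointwise estimates $|\partial_t^\alpha P_t(x)|\lesssim (t+|x|)^{-(n+\alpha)}$ and the Beta-integral computations, so it fills in the sketch rather than departing from it.
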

\begin{proof}
Assume that $m-1\leq \alpha<m$ for some positive integer $m.$ For
any $f\in S(\mathbb{R}^n)\otimes\mathbb{B}$, we have
$$g_\alpha^q(f)(x)
=\|t^\alpha \partial_t^\alpha \mathcal
   {P}_tf(x)\|_{L_\mathbb{B}^q(\mathbb{R}_+,\frac{dt}{t})}
=\left\|\int_{\mathbb{R}^n}K_t(x-y)f(y)dy\right\|_{L_\mathbb{B}^q(\mathbb{R}_+,\frac{dt}{t})},$$
with
$$K_t(x-y)=\frac{\mathbf{\Gamma}(\frac{n+1}{2})}{\pi^{\frac{n+1}{2}}\mathbf{\Gamma}(m-\alpha)}
         t^\alpha \int_0^\infty{\partial_t^m}
         \left(\frac{t+s}{((t+s)^2+|x-y|^2)^{\frac{n+1}{2}}}\right){s^{m-\alpha-1}}{ds},\quad
          x, y\in \mathbb{R}^n,  x\neq y, t>0.$$
It can be proved that
\begin{equation*}
\|K_t(x-y)\|_{\mathcal{L}\big(\mathbb{B},
     \ L_\mathbb{B}^q(\mathbb{R}_+, \frac{dt}{t})\big)}
    \le  C\frac{1}{|x-y|^n},
\end{equation*}
and
\begin{equation*}
\label{2.8} \left\|\nabla_y K_t(x-y)\right\|_{\mathcal
{L}\left(\mathbb{B},L_\mathbb{B}^q\left(\mathbb{R}_+,\frac{dt}{t}\right)\right)}
+ \left\|\nabla_x K_t(x-y)\right\|_{\mathcal
{L}\left(\mathbb{B},L_\mathbb{B}^q\left(\mathbb{R}_+,\frac{dt}{t}\right)\right)}\leq
C\frac{1}{|x-y|^{n+1}},
\end{equation*}
for any $x, y\in \mathbb{R}^n, x\neq y.$ We leave the details of the
proof to the reader. A sketch of it can be found in \cite{JorFRTT}.
\end{proof}

\begin{prop}\label{prop4}
Let $\mathbb{B}$ be a Banach space  which is of Lusin cotype $q$,
$2\leq q<\infty$. Then for every symmetric diffusion semigroup
$\{\mathcal{T}_t\} _{t\geq 0}$ with subordinated semigroup
$\{\mathcal {P}_t\}_{t\geq 0}$ and for every (or, equivalently, for
some) $p\in (1,\infty)$, there is a constant $C$ such that
\begin{equation}\label{02.17}
\|g_{k}^q(f)\|_{L^p(\Omega)}\leq C\|f\|_{L_\mathbb{B}^p(\Omega)},
\quad k=1,2, \ldots, \quad \forall f\in L_\mathbb{B}^p(\Omega).
\end{equation}
Moreover, for any $0<\alpha<\infty,$ if
\begin{equation}\label{02.18}
 \|g_{\alpha}^q(f)\|_{L^p(\Omega)}\leq
C\|f\|_{L_\mathbb{B}^p(\Omega)}, \quad \forall f\in
L_\mathbb{B}^p(\Omega),
\end{equation}
then we have
 \begin{equation}\label{02.19}
\|g_{k\alpha}^q(f)\|_{L^p(\Omega)}\leq
C\|f\|_{L_\mathbb{B}^p(\Omega)}, \quad k=1,2, \ldots, \quad \forall
f\in L_\mathbb{B}^p(\Omega).
\end{equation}
\end{prop}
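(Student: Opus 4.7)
My plan is to reduce Part (i) to Part (ii): since Theorem \ref{cotipo} identifies Lusin cotype $q$ with the boundedness of $g_1^q$, applying Part (ii) with $\alpha = 1$ yields \eqref{02.17} for every integer $k$. I would then handle Part (ii) by induction on $k$, the base case $k=1$ being the hypothesis \eqref{02.18}.

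For the inductive step I would rely on a semigroup commutation identity. Writing $\mathcal{P}_t = \mathcal{P}_{t/2}\mathcal{P}_{t/2}$ and using Proposition \ref{prop1} to identify $\partial_s^\beta \mathcal{P}_s f = e^{-i\pi\beta} A^\beta \mathcal{P}_s f$ (with $A$ the generator of $\{\mathcal{P}_s\}$), the commutativity of $A^{k\alpha}$ with $\mathcal{P}_{t/2}$ and the factorization $A^{(k+1)\alpha} = A^{k\alpha} A^\alpha$ give, after the phase factors cancel,
\begin{equation*}
\partial_t^{(k+1)\alpha} \mathcal{P}_t f \;=\; \bigl.\partial_s^{k\alpha} \mathcal{P}_s\bigr|_{s=t/2} \Bigl[\, \bigl.\partial_u^\alpha \mathcal{P}_u f\bigr|_{u=t/2}\,\Bigr].
\end{equation*}
I would verify this first for $f \in L^2(\Omega)\cap L_\mathbb{B}^p(\Omega)$ via the spectral decomposition and then extend to $L_\mathbb{B}^p(\Omega)$ by density, justified by Theorem \ref{thm2.1}.

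With $h_\alpha(s,y) := s^\alpha \partial_s^\alpha \mathcal{P}_s f(y)$, multiplying the identity by $t^{(k+1)\alpha}$ and performing the change of variable $s = t/2$ inside the $L^q(dt/t)$-integral yields
\begin{equation*}
g_{(k+1)\alpha}^q(f)(x)\;=\; 2^{(k+1)\alpha}\left(\int_0^\infty \bigl\| s^{k\alpha}\partial_s^{k\alpha}\mathcal{P}_s[h_\alpha(s,\cdot)](x)\bigr\|_\mathbb{B}^q\,\frac{ds}{s}\right)^{\!1/q}.
\end{equation*}
Taking the $L^p$-norm in $x$, applying Minkowski's integral inequality (which proceeds cleanly when $p\ge q$), invoking the uniform bound $\|s^{k\alpha}\partial_s^{k\alpha}\mathcal{P}_s\|_{L_\mathbb{B}^p\to L_\mathbb{B}^p}\le C$ from Theorem \ref{thm2.1}, and finally Minkowski once more to reconstitute a $g_\alpha^q$-norm, I obtain
\begin{equation*}
\|g_{(k+1)\alpha}^q(f)\|_{L^p}^q \;\leq\; C\int_0^\infty \|h_\alpha(s,\cdot)\|_{L_\mathbb{B}^p}^q\,\frac{ds}{s}\;\leq\; C\,\|g_\alpha^q(f)\|_{L^p}^q\;\leq\; C'\,\|f\|_{L_\mathbb{B}^p}^q,
\end{equation*}
the last step by \eqref{02.18}. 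This closes the induction.

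The main obstacle is the case $p < q$, where both Minkowski steps run in the unfavorable direction. For $\Omega = \mathbb{R}^n$ this is bypassed using Proposition \ref{prop5}, which realizes $g_{k\alpha}^q$ as an $L_\mathbb{B}^q(\mathbb{R}_+, dt/t)$-valued Calder\'on--Zygmund operator, combined with the extrapolation of Remark \ref{CZ} to pass from one exponent to all of $(1,\infty)$. For a general symmetric diffusion semigroup on $(\Omega,d\mu)$ I would first establish the $p=q$ case directly (where Fubini makes both Minkowski steps trivial equalities) and then bootstrap to the remaining $p\in(1,\infty)$ through a duality/interpolation argument exploiting the contractivity of $\{\mathcal{P}_t\}$ on every $L^p$-space.
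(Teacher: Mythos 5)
Your core commutation identity is correct: by Proposition~\ref{prop1} and the semigroup law one indeed has
\[
\partial_t^{(k+1)\alpha}\mathcal{P}_t f = \Bigl.\partial_s^{k\alpha}\mathcal{P}_s\Bigr|_{s=t/2}\Bigl[\,\bigl.\partial_u^\alpha\mathcal{P}_u f\bigr|_{u=t/2}\,\Bigr],
\]
and collapsing $s=u=t/2$ into a single time variable gives exactly the displayed formula for $g_{(k+1)\alpha}^q(f)$ in terms of $h_\alpha$.

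The gap is in the Minkowski step. When $p\ge q$, Minkowski's integral inequality (applied with exponent $p/q\ge 1$) gives \emph{both}
\[
\|g_{(k+1)\alpha}^q(f)\|_{L^p}^q\leq C\int_0^\infty\|h_\alpha(s,\cdot)\|_{L^p_\mathbb{B}}^q\,\frac{ds}{s}
\qquad\text{and}\qquad
\|g_{\alpha}^q(f)\|_{L^p}^q\leq\int_0^\infty\|h_\alpha(s,\cdot)\|_{L^p_\mathbb{B}}^q\,\frac{ds}{s}.
\]
That is, the second application of Minkowski that you need to ``reconstitute a $g_\alpha^q$-norm'' goes in the same direction as the first one, so you have bounded both $g_{(k+1)\alpha}^q$ and $g_\alpha^q$ by the same intermediate quantity, and there is no way to compare them. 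Only at $p=q$ do both steps become Fubini identities and the argument closes; for any $p\ne q$ the chain is broken. (And note that when $p=q$ your argument is not an induction on $k$ at all: the uniform operator bound $\|s^{k\alpha}\partial_s^{k\alpha}\mathcal{P}_s\|_{L^q_\mathbb{B}\to L^q_\mathbb{B}}\le C$ from Theorem~\ref{thm2.1} reduces every $k$ directly to the base case.)

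The remedies you sketch do not quite close this. The Calder\'on--Zygmund extrapolation from Proposition~\ref{prop5} and Remark~\ref{CZ} does extend the $p=q$ case to all $p\in(1,\infty)$, but only for the Euclidean Poisson semigroup on $\mathbb{R}^n$. For a general $(\Omega,d\mu)$ there is no Calder\'on--Zygmund structure, and the ``duality/interpolation exploiting contractivity'' step is not supplied: the hypothesis \eqref{02.18} is available at a single exponent, so there is nothing to interpolate against, and a duality argument would need the conjugate-exponent bound, which is exactly what is being sought.

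The paper's proof avoids this entirely by \emph{not} collapsing $s$ and $t$. It writes $\tilde T\circ T f(x,t,s)=s\,t^k\,\partial_u^{k+1}\mathcal{P}_u f|_{u=t+s}(x)$ with $s,t$ free, which reconstitutes $g_{k+1}^q$ exactly (and with equality, via the Beta-function computation in \eqref{03.3}), so no Minkowski interchange is needed. The price is that one must bound the second factor $\tilde T$ acting on functions with values in the auxiliary space $L^q_\mathbb{B}(\mathbb{R}_+,\tfrac{dt}{t})$; this is handled by first showing, via the Calder\'on--Zygmund theory on $\mathbb{R}^n$ and \cite[Theorem 5.2]{MTX}, that $L^q_\mathbb{B}(\mathbb{R}_+,\tfrac{ds}{s})$ is itself of Lusin cotype $q$, and then invoking Theorem~\ref{cotipo} for this auxiliary Banach space to get boundedness of $\tilde T$ for every $p$ and every subordinated semigroup. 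That auxiliary-cotype step is precisely the ingredient your proposal is missing and that a direct Minkowski argument cannot replace.
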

\begin{proof} For the case $k=1$, the inequality \eqref{02.17} have
been proved in \cite{MTX}. We only need prove the cases
$k=2,3,\ldots.$ We can prove it by induction. Assume that the
inequality \eqref{02.17} is true for some $1\leq k \in \mathbb{Z}.$
Let us prove that it is true for $k+1$ also. Since the inequality
\eqref{02.17} is true for $k$,  we know that the following operator
$$T: L_\mathbb{B}^q(\mathbb{R}^n)\longrightarrow
L_{L_\mathbb{B}^q\left(\mathbb{R}_+,
 \frac{dt}{t}\right)}^q(\mathbb{R}^n),$$
$$Tf(x,t)=t^k\partial_t^k\mathcal {P}_tf(x),\qquad \forall
f\in L_\mathbb{B}^q(\mathbb{R}^n)$$ is bounded. By Fubini's theorem
we know that the operator $$\tilde{T}:
L_{L_\mathbb{B}^q\left(\mathbb{R}_+ ,
\frac{dt}{t}\right)}^q(\mathbb{R}^n)\longrightarrow
    L^q_{L_\mathbb{B}^q\left( \frac{ds}{s}\frac{dt}{t} \right)}
    \left(\mathbb{R}^n\right),$$
$$\tilde{T}F(x,s,t)=s \partial_s \mathcal {P}_s(F)(x,t),\qquad \forall F(x,t)
    \in L_{L_\mathbb{B}^q\left(\mathbb{R}_+, \frac{dt}{t}\right)}^q(\mathbb{R}^n)$$
is also bounded. Since $\tilde{T}$ can be expressed as a
Calder\'{o}n--Zygmund operator with regular vector-valued  kernel,
by Remark \ref{CZ} we get that $\displaystyle \tilde{T}:
L_{L_\mathbb{B}^q\left(\mathbb{R}_+ ,
\frac{dt}{t}\right)}^p(\mathbb{R}^n)\longrightarrow
    L_{L_\mathbb{B}^q\left( \frac{ds}{s}\frac{dt}{t} \right)}^p(\mathbb{R}^n)$ is bounded for
any $1<p<\infty.$ Hence, by Theorem 5.2 of \cite{MTX}, we know that
$L_\mathbb{B}^q\left(\mathbb{R}_+ , \frac{ds}{s}\right)$ is of Lusin
cotype $q$.

Now given a symmetric diffusion semigroup $\{\mathcal{T}_t\} _{t\geq
0}$ with subordinated semigroup $\{\mathcal {P}_t\}_{t\geq 0}$. As
$\mathbb{B}$ is of Lusin cotype $q$ and $
L_\mathbb{B}^q\left(\mathbb{R}_+ , \frac{ds}{s}\right)$ also is of
Lusin cotype $q$, we get that $T$ is bounded from $\displaystyle
L_{\mathbb{B}}^p\left(\Omega\right)$ to $\displaystyle
L_{L_{\mathbb{B}}^q\left(\mathbb{R}_+,
\frac{dt}{t}\right)}^p\left(\Omega \right)$ and $\tilde{T}$ is
bounded from $L_{L_{\mathbb{B}}^q\left(\mathbb{R}_+,
\frac{dt}{t}\right)}^p\left(\Omega \right)$  to $\displaystyle
L_{L_\mathbb{B}^q\left( \frac{ds}{s}\frac{dt}{t} \right)}^p\left(\Omega\right),$
for any $1<p<\infty.$ So the operator $\tilde{T}\circ T$ is bounded
from $L_{\mathbb{B}}^p\left(\Omega\right)$ to
$L_{L_\mathbb{B}^q\left( \frac{ds}{s}\frac{dt}{t} \right)}^p\left(\Omega\right),$
for any $1<p<\infty,$ and by \eqref{02.6} we have
\begin{align}\label{formula}
\tilde{T}\circ Tf(x,t,s)&= \tilde{T}(T f(x,t))(s)
   = \tilde{T}(t^k\partial_t^k \mathcal {P}_tf(x))(s)\nonumber\\
  &= s \partial_s \mathcal{P}_s(t^k \partial_t^k \mathcal{P}_tf)(x)
  = s t^k \partial_s \partial_t^k \mathcal{P}_s\mathcal{P}_tf(x)\\
  &= s t^k \partial_s \partial_t^k \mathcal{P}_{s+t}f(x)
  = s t^k \partial_u^{k+1} \mathcal{P}_uf\big|_{u=t+s}(x).\nonumber
\end{align}
So there exists a constant $C$ such that
\begin{align}\label{03.3}
\left\|f\right\|_{L_{\mathbb{B}}^p\left(\Omega\right)}^p
  &\geq C\left\|\tilde{T}\circ Tf\right\|_{L_{L_\mathbb{B}^q\left( \frac{ds}{s}\frac{dt}{t} \right)}^p\left(\Omega\right)}^p\nonumber\\
  &= C\left\|s t^k \partial_u^{k+1} \mathcal{P}_uf|_{u=t+s}(x)\right\|_{L_{{L_\mathbb{B}^q\left( \frac{ds}{s}\frac{dt}{t} \right)}}^p\left(\Omega\right)}^p\nonumber\\
  &= C\left\|\left(\int_0^\infty \int_0^\infty\left\|st^k
      \partial_u^{k+1}\mathcal {P}_uf\big|_{u=t+s}(x)
      \right\|_\mathbb{B}^q \frac{ds}{s}\frac{dt}{t}\right)^{\frac{1}{q}}
      \right\|_{L^p\left(\Omega\right)}^p\nonumber\\
 &= C\left\|\left(\int_0^\infty \int_t^\infty t^{kq}(s-t)^{ q}\big\|\partial_s^{k+1}
      \mathcal{P}_sf\big\|_\mathbb{B}^q\frac{ds}{s-t}\frac{dt}{t}\right)^{\frac{1}{q}}
      \right\|_{L^p\left(\Omega\right)}^p  \\
 &=  C\left\|\left(\int_0^\infty \big\|\partial_s^{k+1}\mathcal {P}_sf\big\|_\mathbb{B}^q
     \int_0^st^{kq-1}(s-t)^{q-1}dtds\right)^{\frac{1}{q}}\right\|_{L^p\left(\Omega\right)}^p\nonumber\\
 &= C(\mathbf{B}(kq, q))^{\frac{p}{q}}\left\|\left(\int_0^\infty s^{(k+1)q}\left\|\partial_s^{k+1}
       \mathcal{P}_sf\right\|_\mathbb{B}^q \frac{ ds}{s} \right)^{\frac{1}{q}}\right\|_{L^p\left(\Omega\right)}^p\nonumber\\
 &= C(\mathbf{B}(kq, q))^{\frac{p}{q}}\left\|g_{k+1}^q(f)\right\|_{L^p\left(\Omega\right)}^p.\nonumber
\end{align}
Whence $$\|g_{k+1}^q(f)\|_{L^p(\Omega)}\leq
C\|f\|_{L_\mathbb{B}^p(\Omega)}, \quad \forall f\in
L_\mathbb{B}^p(\Omega).$$ Then we get the inequality \eqref{02.17}
for any $k\in \mathbb{Z}_+$.

We can prove inequality \eqref{02.19} under the assumption
\eqref{02.18} with the similar argument as above. The only
difference is that we should define $T$ by
$$Tf(x,t)=t^{k\alpha}\partial_t^{k\alpha}\mathcal{P}_tf(x), \qquad \forall
f\in L_\mathbb{B}^q(\mathbb{R}^n),$$ and define $\tilde{T}$ by
$$\tilde{T}F(x,s,t)=s^{\alpha}\partial_s^{\alpha}\mathcal{P}_sF(x,t),\qquad \forall F(x,t)
    \in L_{L_\mathbb{B}^q\left(\mathbb{R}_+,
    \frac{dt}{t}\right)}^q(\mathbb{R}^n).$$ And by Proposition
    \ref{prop5} we know that in this case $\tilde{T}$
 can be expressed as a Calder\'{o}n--Zygmund operator also.
\end{proof}

The following theorem is proved in \cite{MTX} which we will use
later.
\begin{thm}{\cite[Theorem 3.2]{MTX}}\label{th3.2} Let $\mathbb{B}$ be a Banach space and $1<p,q< \infty.$ Let $h(x,t)$ be a function
in $L^p_{L^q_\mathbb{B}\left(\mathbb{R}_+,
\frac{dt}{t}\right)}(\Omega).$ Consider the operator $Q$ defined by
$\displaystyle Qh(x) = \int_0^\infty
\partial_t \mathcal{P}_t h(x,t)dt, \, x \in \Omega$. Then for nice
function $h$ we have
$$ \norm{g_1^q(Qh)}_{L^p(\Omega) } \le C_{p,q} \|h \|_{L^p_{L^q_\mathbb{B}\left(\mathbb{R}_+, \frac{dt}{t}\right)}\left(\Omega\right)},$$
where the constant $C_{p,q}$  depends only on $p$ and $q$.
\end{thm}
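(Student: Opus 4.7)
My plan is to exploit that $Q$ is the formal adjoint of the $g$-function operator $V\colon f\mapsto t\partial_t\mathcal{P}_tf$; indeed, using self-adjointness of $\mathcal{P}_t$ on $L^2$ one checks $\int_\Omega\int_0^\infty\langle Vf,h\rangle\,ds\,d\mu=\int_\Omega\langle f,Qh\rangle\,d\mu$, so that $VQ=VV^{*}$. Unfolding via the semigroup law $\mathcal{P}_s\mathcal{P}_t=\mathcal{P}_{s+t}$,
\begin{equation*}
VQh(x,s)=s\partial_s\mathcal{P}_s(Qh)(x)=\int_0^\infty s\,\partial_u^2\mathcal{P}_u\big|_{u=s+t}\,h(\cdot,t)(x)\,dt.
\end{equation*}
The presence of \emph{two} derivatives of $\mathcal{P}_u$ provides the decisive decay: by Theorem~\ref{thm2.1} with $\alpha=2$, the operator norm of $s\,\partial_u^2\mathcal{P}_u|_{u=s+t}$ on $L^p_\mathbb{B}(\Omega)$ is bounded by $Cs/(s+t)^2$, which is what makes the estimate work with no cotype hypothesis on $\mathbb{B}$.

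The first step is the base case $\mathbb{B}$ Hilbert, $p=q=2$. Spectrally, $\|Vf\|_{L^2(\Omega;L^2(dt/t))}^2=\int \lambda^2\cdot\tfrac{1}{4\lambda^2}\,d\|Ef\|^2=\tfrac14\|f-E_0f\|_{L^2}^2$, so $V$ is a bounded operator and $VV^{*}=VQ$ is bounded on $L^2(\Omega\times\mathbb{R}_+,d\mu\otimes dt/t)$ with norm $\le 1/4$.

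The second step extends to arbitrary Banach $\mathbb{B}$ and arbitrary $1<p,q<\infty$ on $\Omega=\mathbb{R}^n$ with the classical Poisson semigroup. One views $VQ$ as a convolution operator on $\mathbb{R}^n$ whose operator-valued kernel $z\mapsto\mathbf{K}(z)\in\mathcal{L}(L^q_\mathbb{B}(\mathbb{R}_+,dt/t),L^q_\mathbb{B}(\mathbb{R}_+,ds/s))$ acts by $(\mathbf{K}(z)h)(s):=\int_0^\infty s\,\partial_u^2P_u(z)|_{u=s+t}\,h(t)\,dt$. Using the explicit decay $|\partial_u^kP_u(z)|\le C(u+|z|)^{-n-k}$ and a Minkowski/scaling computation in the spirit of Proposition~\ref{prop5}, one verifies the size bound $\|\mathbf{K}(z)\|\le C|z|^{-n}$ and the gradient bound $\|\nabla\mathbf{K}(z)\|\le C|z|^{-n-1}$ as operators between the mixed-norm spaces. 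Combined with the $L^2$ bound from step one, the vector-valued Calder\'on--Zygmund theory encapsulated in Remark~\ref{CZ} yields boundedness on $L^p_{L^q_\mathbb{B}}(\mathbb{R}^n)$ for every Banach $\mathbb{B}$ and every $1<p<\infty$.

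Finally, to pass to a general symmetric diffusion semigroup on $(\Omega,d\mu)$, I would use the subordination formula \eqref{1.4} together with Stein's transference principle: writing $\mathcal{P}_t$ as a positive average of $\{\mathcal{T}_u\}$ and using $\|\mathcal{T}_u\|_{L^p_\mathbb{B}\to L^p_\mathbb{B}}\le 1$, one dominates the $g$-function built from the abstract $\mathcal{P}_t$ by the corresponding Euclidean square function on $\mathbb{R}_+$, giving a constant that depends only on $p$ and $q$. The main obstacle is step two: rigorously establishing the kernel estimates $\|\mathbf{K}(z)\|\le C|z|^{-n}$ and $\|\nabla\mathbf{K}(z)\|\le C|z|^{-n-1}$ in the operator norm between $L^q_\mathbb{B}(\mathbb{R}_+,dt/t)$ and $L^q_\mathbb{B}(\mathbb{R}_+,ds/s)$, since this requires controlling $\int_0^\infty s\,\partial_u^2P_u(z)|_{u=s+t}\,h(t)\,dt$ in $L^q(ds/s)$ uniformly in $z$ with the correct polynomial decay, and analogously for its $z$-gradient.
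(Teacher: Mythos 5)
The central flaw is in step two. The Calder\'on--Zygmund extrapolation encapsulated in Remark~\ref{CZ} takes as input the boundedness of the operator on $L^{p_0}_{\mathbb{B}_1}(\mathbb{R}^n)$ for \emph{some} $p_0\in(1,\infty)$ and extrapolates only in the Lebesgue exponent, not in the target Banach space. In the present setting the relevant spaces are $\mathbb{B}_1=L^q_\mathbb{B}(\mathbb{R}_+,\tfrac{dt}{t})$ and $\mathbb{B}_2=L^q_\mathbb{B}(\mathbb{R}_+,\tfrac{ds}{s})$, so the required input is boundedness on $L^{p_0}_{L^q_\mathbb{B}(\mathbb{R}_+,\tfrac{dt}{t})}(\mathbb{R}^n)$ with the \emph{same} $q$ and $\mathbb{B}$ as in the target. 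Your step one, being $\mathbb{B}$ Hilbert and $q=2$, supplies this only in that one special case; for general $q$ or non-Hilbert $\mathbb{B}$ it gives nothing to feed into Remark~\ref{CZ}. The correct base case is $p_0=q$, and it is elementary: from $VQh(x,s)=\int_0^\infty s\,\partial_u^2\mathcal{P}_u|_{u=s+t}\,h(x,t)\,dt$, Minkowski's inequality in $L^q_\mathbb{B}(\Omega)$ plus Theorem~\ref{thm2.1} with $\alpha=2$ yield
\begin{equation*}
\|VQh(\cdot,s)\|_{L^q_\mathbb{B}(\Omega)}\le C\int_0^\infty\frac{st}{(s+t)^2}\,\|h(\cdot,t)\|_{L^q_\mathbb{B}(\Omega)}\,\frac{dt}{t},
\end{equation*}
and Fubini followed by the Schur test for the kernel $st/(s+t)^2$ on $L^q(\mathbb{R}_+,\tfrac{dt}{t})$ closes the $p=q$ case for every Banach $\mathbb{B}$ and every measure space $\Omega$, with no cotype assumption. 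This is exactly the cancellation you correctly identified as the mechanism; it just must be exploited at $p=q$, not at the Hilbertian $L^2$ level.

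The third step is also not justified as written. The subordination formula~\eqref{1.4} and positivity of $\mathcal{T}_u$ do not give a pointwise domination of the abstract square function $s\mapsto s\partial_s\mathcal{P}_sQh(x)$ by a Euclidean square function on $\mathbb{R}_+$: subordination is an average over $u$ inside each fixed $\mathcal{P}_t$, not a transference of the $L^q(\tfrac{ds}{s})$-norm. Calder\'on--Zygmund theory is intrinsically Euclidean, so once you leave $\mathbb{R}^n$ some other device is needed for $p\neq q$. In \cite{MTX} the general-semigroup case is deduced from the $p=q$ case by approximating the symmetric diffusion semigroup by reverse martingales via Rota's dilation theorem, a step which has no analogue in your sketch. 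Without it, what you have proved is the $\mathbb{R}^n$ case for all $1<p,q<\infty$ and the general-$\Omega$ case only when $p=q$.

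Finally, a small consistency check worth making explicit: the operator-valued kernel should be normalized against the measure $\tfrac{dt}{t}$, i.e.\ $\tilde\kappa(s,t)=st\,\partial_u^2P_u(z)|_{u=s+t}$ with majorant $C\,st/(s+t+|z|)^{n+2}$, for which both Schur integrals $\int\tilde\kappa\,\tfrac{dt}{t}$ and $\int\tilde\kappa\,\tfrac{ds}{s}$ are $O(|z|^{-n})$; writing the bound as $Cs/(s+t)^2$ against $dt$, as you did, is fine but should be carried consistently through the Schur verification of $\|\mathbf K(z)\|\le C|z|^{-n}$.
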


\section{Proofs of Theorem A and Theorem B} \label{g-function}

 Now we are in a position to prove Theorem A and Theorem B.
\begin{proof}[\bf Proof of Theorem A]
 $\textup{(i)}\Rightarrow \textup{(ii)}.$ Since $\mathbb{B}$ is of Lusin cotype
 $q$, by Proposition \ref{prop4} we have $$\left\|g_{k}^q(f)\right\|_{L^p(\Omega)}\leq
C\|f\|_{L_\mathbb{B}^p(\Omega)}, \quad k=1,2,\ldots, \quad \forall
f\in L_\mathbb{B}^p(\Omega).$$ Then, for any $\alpha>0$, there
exists $k\in \mathbb{N}$ such that $\alpha<k.$ By Proposition
\ref{prop3}, we have
$$\left\|g_{\alpha}^q(f)\right\|_{L^p(\Omega)}\leq C\left\|g_{k}^q(f)\right\|_{L^p(\Omega)}\leq
C\|f\|_{L_\mathbb{B}^p(\Omega)},\quad \forall f\in
L_\mathbb{B}^p(\Omega).$$

$\textup{(ii)}\Rightarrow \textup{(i)}.$ Since
$\left\|g_{\alpha}^q(f)\right\|_{L^p(\Omega)}\leq
C\|f\|_{L_\mathbb{B}^p(\Omega)}$ for any $f\in
L_\mathbb{B}^p(\Omega),$ by Proposition \ref{prop4} there exists an
integer $k$ such that $k\alpha>1$ and
$$\left\|g_{k\alpha}^q(f)\right\|_{L^p(\Omega)}\leq
C\|f\|_{L_\mathbb{B}^p(\Omega)}$$ for any $f\in
L_\mathbb{B}^p(\Omega).$ By Proposition \ref{prop3}, we have
$$\left\|g_1^q(f)\right\|_{L^p(\Omega)}\leq C\left\|g_{k\alpha}^q(f)\right\|_{L^p(\Omega)}\leq
C\|f\|_{L_\mathbb{B}^p(\Omega)}$$ for any $f\in
L_\mathbb{B}^p(\Omega).$ Hence, by Theorem 1.1, $\mathbb{B}$ is of
Lusin cotype $q.$
\end{proof}

\begin{proof}[\bf Proof of Theorem B] $\textup{(i)}\Rightarrow \textup{(ii)}.$
It is easy to deduce from \eqref{2.25} that for any $f, g \in
L^2(\Omega)$
\begin{equation}\label{02.22}
\int_{\Omega}(f-E_0(f))(g-E_0(g))d\mu=\frac{4^\alpha}
  {\mathbf{\Gamma}(2\alpha)}\int_{\Omega}\int_0^\infty
   (t^\alpha \partial_t^\alpha \mathcal{P}_tf)(t^\alpha \partial_t^\alpha \mathcal{P}_tg)\frac{dt}{t}d\mu.
\end{equation}
Now we use duality. Fix two functions $f\in L_\mathbb{B}^p(\Omega)$
and $g\in L_\mathbb{B^*}^{p'}(\Omega),$ where
$\frac{1}{p}+\frac{1}{p'}=1.$ Without loss of generality, we may
assume that $f$ and $g$ are in the algebraic tensor products
$\big(L^p(\Omega)\cap L^2(\Omega)\big)\otimes \mathbb{B}$ and
$\big(L^{p'}(\Omega)\cap L^2(\Omega)\big)\otimes \mathbb{B}^*,$
respectively. With $\langle\ \ ,\ \rangle$ denoting the duality
between $\mathbb{B}$ and $\mathbb{B}^*,$ we have
\begin{equation}
\label{2.17}\int_{\Omega}\left\langle f, g\right\rangle
d\mu=\int_{\Omega}\left\langle E_0(f), E_0(g)\right\rangle d\mu
  +\int_{\Omega}\left\langle f-E_0(f), g-E_0(g)\right\rangle d\mu.
\end{equation}
The first term on the right is easy to be estimated:
\begin{equation}
\label{2.18}\left|\int_{\Omega}\langle E_0(f), E_0(g)\rangle
d\mu\right|
  \leq \|E_0(f)\|_{L_\mathbb{B}^p(\Omega)}\|E_0(g)\|_{L_{\mathbb{B}^*}^{p'}(\Omega)}
  \leq \|E_0(f)\|_{L_\mathbb{B}^p(\Omega)}\|g\|_{L_{\mathbb{B}^*}^{p'}(\Omega)}.
\end{equation}
For the second one, by \eqref{02.22} and H\"{o}lder's inequality
\begin{align}\label{2.19}
\left|\int_{\Omega}\langle f-E_0(f), g-E_0(g)\rangle d\mu\right|
  &= \frac{4^\alpha}{\mathbf{\Gamma}(2\alpha)}
       \left|\int_{\Omega}\int_0^\infty\left\langle t^\alpha \partial_t^\alpha
       \mathcal{P}_tf,t^\alpha \partial_t^\alpha \mathcal{P}_tg \right\rangle \frac{dt}{t}d\mu\right|\nonumber\\
  &\le \frac{4^\alpha}{\mathbf{\Gamma}(2\alpha)}\int_{\Omega}\int_0^\infty\left\|t^\alpha \partial_t^\alpha
       \mathcal{P}_tf\right\|_\mathbb{B} \left\|t^\alpha \partial_t^\alpha
       \mathcal{P}_tg\right\|_{\mathbb{B}^*} \frac{dt}{t}d\mu \\
  &\le \frac{4^\alpha}{\mathbf{\Gamma}(2\alpha)}\big\|g_{\alpha}^q(f)\big\|_{L^p(\Omega)}
       \big\|g_{\alpha}^{q'}(g)\big\|_{L^{p'}(\Omega)}.\nonumber
\end{align}
Now since $\mathbb{B}$ is of Lusin type $q$, $\mathbb{B}^*$ is of
Lusin cotype $q'$. Thus by Theorem A,
\begin{equation}
\label{2.20} \big\|g_{\alpha}^{q'}(g)\big\|_{L^{p'}(\Omega)}\leq
C\left\|g\right\|_{L_{\mathbb{B}^*}^{p'}(\Omega)}.
\end{equation}
Combining \eqref{2.17}--\eqref{2.20}, we get
$$\Big|\int_{\Omega}\langle f, g\rangle d\mu\Big|\leq \Big( \left\|E_0(f)\right\|_{L_\mathbb{B}^p(\Omega)}
   +C \left\|g_{\alpha}^q(f)\right\|_{L^p(\Omega)} \Big)\left\|g\right\|_{L_{\mathbb{B}^*}^{p'}(\Omega)},$$
which gives (ii) by taking the supremum over all $g$ as above such
that $\|g\|_{L_{\mathbb{B}^*}^{p'}(\Omega)}\leq 1.$

$\textup{(ii)}\Rightarrow \textup{(i)}.$ We only need consider the
particular case on $\mathbb{R}^n$.  In this case, $E_0(f)=0$ for any
$f\in L_{\mathbb{B}}^p(\mathbb{R}^n).$ Assuming $p=q$ and $k-1\le
\alpha<k$ for some $k\in \mathbb{Z}_+$, by Proposition \ref{prop3}
we have
\begin{equation}\label{03.2}
\|f\|_{L_\mathbb{B}^q(\mathbb{R}^n)}\leq
C\left\|g_{\alpha}^q(f)\right\|_{L^q(\mathbb{R}^n)}\leq C
\left\|g_k^q(f)\right\|_{L^q(\mathbb{R}^n)}, \,
\end{equation} for
any $f\in L_{\mathbb{B}}^q(\mathbb{R}^n).$ By using (\ref{03.3}) and
(\ref{formula}), we have
\begin{equation*}
\left(\int_0^\infty
    s^{kq}\norm{\partial_s^k\mathcal{P}_sf}_{\mathbb{B}}^q\frac{ds}{s}\right)^{\frac{1}{q}}
 =C\left(\int_0^\infty \int_0^\infty s_1^qs_2^{(k-1)q}\norm{\partial_{s_2}^{k-1}\mathcal{P}_{s_2}
   \left(\partial_{s_1}\mathcal{P}_{s_1}\right)f}_{\mathbb{B}}^q\frac{ds_2}{s_2}\frac{ds_1}{s_1}\right)^{\frac{1}{q}}.
\end{equation*}
By iterating the argument, we can get
\begin{equation*}
\left(\int_0^\infty
    s^{kq}\norm{\partial_s^k\mathcal{P}_sf}_{\mathbb{B}}^q\frac{ds}{s}\right)^{\frac{1}{q}}
 =C\left(\int_0^\infty\cdots \int_0^\infty s_1^q\cdots
 s_k^q\norm{\partial_{s_1}\mathcal{P}_{s_1}\cdots\partial_{s_k}\mathcal{P}_{s_k} f}_{\mathbb{B}}^q
 \frac{ds_1}{s_1}\cdots \frac{ds_k}{s_k}\right)^{\frac{1}{q}}.
\end{equation*}
Therefore we can choose a function $\displaystyle b(x,s_1, \dots,
s_k) \in
L^q_{L_\mathbb{B}^q\big(\frac{dt_1}{t_1}\cdots\frac{dt_k}{t_k}\big)}\left(\mathbb{R}^n\right)$
of unit norm such that
$$\big\|g^{q'}_k(f)\big\|_{L^{q'}(\mathbb{R}^n)} =C\int_{\mathbb{R}^n}
\int_0^\infty \cdots \int_0^\infty \left\langle s_1\cdots s_k~
\partial_{s_1}\mathcal{P}_{s_1}\cdots
\partial_{s_k} \mathcal{P}_{s_k} f(x),~b(x,s_1, \dots, s_k) \right\rangle \frac{ds_1}{s_1} \cdots\frac{ds_k}{s_k}dx.$$
We may assume that $f$ and $b$ are nice enough to legitimate the
calculations below. By Fubini's theorem, H\"{o}lder's inequality and
\eqref{03.2}, we have
\begin{align}\label{4.7}
&\big\|g^{q'}_k(f)\big\|_{L^{q'}(\mathbb{R}^n)}\nonumber\\
&=C\int_{\mathbb{R}^n} \int_0^\infty \cdots \int_0^\infty
\big\langle s_1\cdots s_k~ \partial_{s_1}\mathcal{P}_{s_1}\cdots
\partial_{s_k} \mathcal{P}_{s_k} f(x),~b(x,s_1, \dots, s_k) \big\rangle \frac{ds_1}{s_1} \cdots\frac{ds_k}{s_k}dx \nonumber \\
&=  C \int_{\mathbb{R}^n} \left\langle f(x), \int_0^\infty \cdots
      \int_0^\infty  s_1 \cdots s_k~\partial_{s_1}\mathcal{P}_{s_1}\cdots
      \partial_{s_k} \mathcal{P}_{s_k} b(x,s_1, \dots,s_k)  \frac{ds_1}{s_1} \cdots\frac{ds_k}{s_k} \right\rangle dx \nonumber \\
&\le  C \|f\|_{L^{q'}_{\mathbb{B}^*}(\mathbb{R}^n)} \norm{
        \int_0^\infty\cdots  \int_0^\infty s_1\cdots s_k\partial_{s_1}\mathcal{P}_{s_1}\cdots \partial_{s_k} \mathcal{P}_{s_k} b(x,s_1,
        \dots, s_k)\frac{ds_1}{s_1} \cdots\frac{ds_k}{s_k} }_{L^{q}_{\mathbb{B}}(\mathbb{R}^n)}  \\
&\le  C \|f\|_{L^{q'}_{\mathbb{B}^*}(\mathbb{R}^n)} \norm{
       g_k^q\left(\int_0^\infty  \cdots \int_0^\infty s_1\cdots s_k
       \partial_{s_1}\mathcal{P}_{s_1}\cdots \partial_{s_k}\mathcal{P}_{s_k}   b(x,s_1, \dots,
       s_k)  \frac{ds_1}{s_1} \cdots\frac{ds_k}{s_k}\right)}_{L^{q}(\mathbb{R}^n)}
       \nonumber\\
&=: C \|f\|_{L^{q'}_{\mathbb{B}^*}(\mathbb{R}^n)}
\norm{g_k^q\left(G_k(b)\right)}_{L^{q}(\mathbb{R}^n)}, \nonumber
\end{align}
where $$G_k(b)=\int_0^\infty  \cdots \int_0^\infty s_1 \cdots s_k
       \partial_{s_1}\mathcal{P}_{s_1}\cdots \partial_{s_k}\mathcal{P}_{s_k}   b(x,s_1, \dots,
       s_k)  \frac{ds_1}{s_1} \cdots\frac{ds_k}{s_k},\quad k\in \mathbb{Z}_+.$$
 Using \eqref{03.3}, Fubini's theorem and {Theorem  \ref{th3.2}} repeatedly, we have
\begin{align}\label{4.8}
&\norm{ g_k^q\left(G_k(b)\right)}^q_{L^{q}(\mathbb{R}^n)}\\
 &\le {C\int_{\mathbb{R}^n}\int_0^\infty\cdots\int_0^\infty\Big\|
         t_1\partial_{t_1}\mathcal{P}_{t_1}\cdots t_k\partial_{t_k}\mathcal{P}_{t_k}\left(G_k(b)\right)\Big\|_{\mathbb{B}}^q
         \frac{dt_1}{t_1}\cdots\frac{dt_k}{t_k}}dx\nonumber\\
  &= {C\int_0^\infty\cdots\int_0^\infty\int_{\mathbb{R}^n}\int_0^\infty\norm{t_k\partial_{t_k}\mathcal{P}_{t_k}
        \left[\int_0^\infty s_k \partial_{s_k}\mathcal{P}_{s_k}\left(
        t_1\partial_{t_1}\mathcal{P}_{t_1}\cdots
        t_{k-1}\partial_{t_{k-1}}\mathcal{P}_{t_{k-1}}G_{k-1}(b)\right)\frac{ds_k}{s_k}\right]}_{\mathbb{B}}^q}\nonumber\\
   &\quad {\frac{dt_k}{t_k}dx\frac{dt_1}{t_1}\cdots\frac{dt_{k-1}}{t_{k-1}}}\nonumber\\
   &\le C\int_0^\infty\cdots\int_0^\infty\int_{\mathbb{R}^n}\int_0^\infty\norm{
          t_1\cdots t_{k-1}\partial_{t_1}\mathcal{P}_{t_1}\cdots\partial_{t_{k-1}}\mathcal{P}_{t_{k-1}}G_{k-1}(b)}_{\mathbb{B}}^q\frac{ds_k}{s_k}
          dx\frac{dt_1}{t_1}\cdots\frac{dt_{k-1}}{t_{k-1}}\nonumber\\
   &\vdots \nonumber\\
   &\le C\int_{\mathbb{R}^n}\int_0^\infty\cdots\int_0^\infty\norm{b(x,s_1,
          \dots,s_k)}_{\mathbb{B}}^q
          \frac{ds_1}{s_1}\cdots\frac{ds_k}{s_k}dx=C.\nonumber
\end{align}
Combining \eqref{4.7} and \eqref{4.8}, we get
$$\left\|g_k^{q'}(f)\right\|_{L^{q'}\left(\mathbb{R}^n\right)}\leq
C\|f\|_{L_{\mathbb{B}^*}^{q'}(\mathbb{R}^n)}.$$ By Theorem A,
$\mathbb{B}^*$ is of Lusin cotype $q'.$ Hence $\mathbb{B}$ is of
Lusin type $q.$

If $p \neq q$, it suffices to prove that the operator   $b
\rightarrow g_k^q(G_k(b))$ maps
$L^p_{L_\mathbb{B}^q\big(\frac{dt_1}{t_1}\cdots\frac{dt_k}{t_k}\big)}\left(\mathbb{R}^n\right)
$ into $L^p(\mathbb{R}^n).$ To that end  we shall use the theory of
vector-valued Calder\'on--Zygmund operators. We borrow this idea
from \cite{OX}. Let us consider the operator
$$T(b)(x,t_1,\ldots,t_k)=t_1 \partial_{t_1}\mathcal{P}_{t_1} \cdots t_k \partial_{t_k}\mathcal{P}_{t_k}\int_0^\infty \cdots \int_0^\infty
s_1\partial_{s_1}\mathcal{P}_{s_1} \cdots s_k
\partial_{s_k}\mathcal{P}_{s_k}b(x,s_1,\ldots,s_k)\frac{ds_1}{s_1}\cdots
\frac{ds_k}{s_k}.$$ Clearly,
$$\norm{T(b)(x,t_1,\ldots,t_k)}_{L^p_{L_\mathbb{B}^q\big(\frac{dt_1}{t_1}\cdots
\frac{dt_k}{t_k}\big)}\left(\mathbb{R}^n\right)}=\norm{g_k^q\left(G_k(b)\right)}_{L^{p}(\mathbb{R}^n)}.$$
 Therefore it is enough to prove $$T:
L^p_{L_\mathbb{B}^q\big(\frac{dt_1}{t_1}\cdots\frac{dt_k}{t_k}\big)}\left(\mathbb{R}^n\right)\longrightarrow
L^p_{L_\mathbb{B}^q\big(\frac{ds_1}{s_1}\cdots\frac{ds_k}{s_k}\big)}\left(\mathbb{R}^n\right).$$
Hence as we already know that $T$ is bounded in the case $p=q$, in order to get the case $p\neq q$
it suffices to show  that the kernel of  $T$ satisfies
the standard estimates, see Remark \ref{CZ}. For simply and essentially, we only need
consider the case when $k=2.$ So \begin{align*} T(b)(x,t_1,t_2)&=t_1
t_2
\partial_{t_1}\mathcal{P}_{t_1}
\partial_{t_2}\mathcal{P}_{t_2}\int_0^\infty \int_0^\infty
s_1  s_2\partial_{s_1}\mathcal{P}_{s_1}
\partial_{s_2}\mathcal{P}_{s_2}(b)(x,s_1, s_2)\frac{ds_1}{s_1}
\frac{ds_2}{s_2}\\
&= \int_{\mathbb{R}^n}\int_0^\infty \int_0^\infty t_1 t_2
\partial_{t_1}\mathcal{P}_{t_1}
\partial_{t_2}\mathcal{P}_{t_2}s_1
s_2\partial_{s_1}\mathcal{P}_{s_1}
\partial_{s_2}\mathcal{P}_{s_2}(x-y)b(y,s_1, s_2)\frac{ds_1}{s_1}
\frac{ds_2}{s_2}dy.
\end{align*} Then the operator-valued kernel $K(x)$
is $\displaystyle \int_0^\infty \int_0^\infty t_1 t_2
\partial_{t_1}\mathcal{P}_{t_1}
\partial_{t_2}\mathcal{P}_{t_2}s_1
s_2\partial_{s_1}\mathcal{P}_{s_1}
\partial_{s_2}\mathcal{P}_{s_2}(x)\frac{ds_1}{s_1}
\frac{ds_2}{s_2}.$ For any $b(s_1, s_2)\in
 L^q_{\mathbb{B}}\left(\frac{ds_1}{s_1}\frac{ds_2}{s_2}\right)$
 with unit norm, we
have
\begin{align*}
\norm{K(x)b}_{\mathbb{B}}&=\norm{\int_0^\infty \int_0^\infty
t_1t_2\partial_{t_1}\mathcal{P}_{t_1}\partial_{t_2}\mathcal{P}_{t_2}s_1
s_2\partial_{s_1}\mathcal{P}_{s_1}
\partial_{s_2}\mathcal{P}_{s_2}(x)b(s_1,s_2)\frac{ds_1}{s_1}
\frac{ds_2}{s_2}}_{\mathbb{B}}\\
&= \norm{\int_0^\infty \int_0^\infty t_1 t_2s_1
s_2\partial^4_{u}\mathcal{P}_{u}(x)\Big|_{u=t_1+t_2+s_1+s_2}b(s_1,s_2)\frac{ds_1}{s_1}
\frac{ds_2}{s_2}}_{\mathbb{B}}\\
&\le \int_0^\infty \int_0^\infty t_1 t_2s_1
s_2\partial^4_{u}\mathcal{P}_{u}(x)\Big|_{u=t_1+t_2+s_1+s_2}\norm{b(s_1,s_2)}_{\mathbb{B}}\frac{ds_1}{s_1}
\frac{ds_2}{s_2}\\
&\le
\norm{b}_{L^q_{\mathbb{B}}\left(\frac{ds_1}{s_1}\frac{ds_2}{s_2}\right)}\left\{\int_0^\infty
\int_0^\infty \left(t_1 t_2s_1
s_2\partial^4_{u}\mathcal{P}_{u}(x)\Big|_{u=t_1+t_2+s_1+s_2}\right)^{q'}\frac{ds_1}{s_1}
\frac{ds_2}{s_2}\right\}^{\frac{1}{q'}}\\
&\le C \left\{\int_0^\infty \int_0^\infty \left(\frac{t_1 t_2s_1
s_2}{(t_1+t_2+s_1+s_2+|x|)^{n+4}}\right)^{q'}\frac{ds_1}{s_1}
\frac{ds_2}{s_2}\right\}^{\frac{1}{q'}}\\
&\le C \frac{t_1t_2}{(t_1+t_2+|x|)^{n+2}}.
\end{align*}
Therefore,
\begin{multline*}
\norm{K(x)b}_{L^q_{\mathbb{B}}\left(\frac{dt_1}{t_1}\frac{dt_2}{t_2}\right)}=\left(\int_0^\infty\int_0^\infty
    \norm{K(x)b}^q_{\mathbb{B}}\frac{dt_1}{t_1}\frac{dt_2}{t_2}\right)^{\frac{1}{q}}\\
\le C\left(\int_0^\infty\int_0^\infty
    \left(\frac{t_1t_2}{(t_1+t_2+|x|)^{n+2}}\right)^q
     \frac{dt_1}{t_1}\frac{dt_2}{t_2}\right)^{\frac{1}{q}}
\le \frac{C}{|x|^n}.
\end{multline*}
Similarly, we can show that $$\norm{\nabla K(x)}\le
\frac{C}{|x|^{n+1}}.$$ Therefore, $K$ is a regular vector-valued
kernel and the proof is finished.
\end{proof}

\section{Poisson Semigroup on $\mathbb{R}^n$}\label{onRn}

In this section, we devote to study the fractional area function and
the fractional $g_{\lambda}^{*}$-function on $\mathbb{R}^n$ in the
vector-valued case. Our main goal is to prove the analogous results
with Theorem A and Theorem B related to these two functions on
$\mathbb{R}^n.$

Let $\mathbb{B}$ be a Banach space, $0<\alpha<\infty,$ $\lambda>1,$
and $1<q<\infty.$ We define the fractional area function on
$\mathbb{R}^n$  as
$$S_\alpha^q(f)(x)=\left(\iint_{\Gamma(x)}\left\|t^\alpha\partial_t^\alpha
\mathcal{P}_tf(y)\right\|_\mathbb{B}^q\frac{dydt}{t^{n+1}}\right)^\frac{1}{q},
\quad \forall f\in \bigcup\limits_{1\leq p\leq
\infty}L_\mathbb{B}^p\left(\mathbb{R}^n\right),$$ where
$\displaystyle\Gamma(x)=\left\{(y, t)\in \mathbb{R}_+^{n+1}: |x-y|<t
\right\},$ and define the fractional $g_\lambda^*$-function on
$\mathbb{R}^n$  as
$$g_{\lambda, \alpha}^{q,*}(f)(x)=\left(\iint_{\mathbb{R}_+^{n+1}}
\left(\frac{t}{|x-y|+t}\right)^{\lambda n}
\left\|t^\alpha\partial_t^\alpha \mathcal
{P}_tf(y)\right\|_\mathbb{B}^q\frac{dydt}{t^{n+1}}\right)^{\frac{1}{q}},
\quad \forall f\in \bigcup\limits_{1\leq p\leq
\infty}L_\mathbb{B}^p\left(\mathbb{R}^n\right).$$

The following proposition demonstrate that the vector-valued
fractional area function $S_\alpha^q$ can be treated as an $
L^q_\mathbb{B}(\Gamma(0),\frac{dydt}{t^{n+1}})$-norm of a
Calder\'{o}n--Zygmund operator.

\begin{prop}
\label{lem3}  Given a Banach space $\mathbb{B}$, $1< q<\infty$ and
$0<\alpha<\infty$, then $S_\alpha^q(f)$ can be expressed as  an
$L^q_\mathbb{B}(\Gamma(0),\frac{dydt}{t^{n+1}})$-norm of  a
Calder\'{o}n--Zygmund operator on $\mathbb{R}^n$ with regular
vector-valued kernel.
\end{prop}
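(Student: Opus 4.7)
The plan mirrors that of Proposition \ref{prop5}, with the cone $\Gamma(0)$ playing the role of the half-line $\mathbb{R}_+$. After the translation $y \mapsto x+y$, we have
$$S_\alpha^q(f)(x) = \left(\iint_{\Gamma(0)} \|t^\alpha \partial_t^\alpha \mathcal{P}_t f(x+y)\|_{\mathbb{B}}^q \frac{dy\,dt}{t^{n+1}}\right)^{1/q}.$$
For $f \in \mathcal{S}(\mathbb{R}^n)\otimes\mathbb{B}$ I would set $Tf(x)(y,t) = t^\alpha \partial_t^\alpha \mathcal{P}_t f(x+y)$, with $(y,t)\in \Gamma(0)$ regarded as auxiliary variables, and use the Segovia--Wheeden formula \eqref{2.1} to write $Tf(x) = \int_{\mathbb{R}^n} \mathcal{K}(x-\xi) f(\xi)\,d\xi$, where
$$\mathcal{K}(u)(y,t) = \frac{\Gamma(\frac{n+1}{2})\,e^{-i\pi(m-\alpha)}}{\pi^{(n+1)/2}\,\Gamma(m-\alpha)}\, t^\alpha \int_0^\infty \partial_t^m\!\left(\frac{t+s}{((t+s)^2+|u+y|^2)^{(n+1)/2}}\right) s^{m-\alpha-1}\,ds,$$
and $m$ is the smallest integer exceeding $\alpha$. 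Then $\mathcal{K}(u)$ acts as a multiplier in $\mathcal{L}(\mathbb{B},\, L^q_{\mathbb{B}}(\Gamma(0),\, dy\,dt/t^{n+1}))$, and $S_\alpha^q(f)(x)$ is exactly the norm of $Tf(x)$ in this target space.

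The analytic heart of the argument is the pointwise bound
$$|\partial_t^\alpha P_t(w)| \le \frac{C_\alpha}{(t+|w|)^{n+\alpha}}, \qquad |\nabla_w \partial_t^\alpha P_t(w)| \le \frac{C_\alpha}{(t+|w|)^{n+\alpha+1}},$$
which I would derive from the elementary estimates $|\partial_t^m P_t(w)| \le C_m(t+|w|)^{-n-m}$ and $|\nabla_w \partial_t^m P_t(w)| \le C_m(t+|w|)^{-n-m-1}$ by inserting them in \eqref{2.1}, performing the substitution $s = (t+|w|)\sigma$, and recognising the convergent Beta integral $\mathbf{B}(m-\alpha, n+\alpha)$.

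With these estimates in hand, the required standard kernel bounds $\|\mathcal{K}(u)\| \le C|u|^{-n}$ and $\|\nabla \mathcal{K}(u)\| \le C|u|^{-n-1}$ reduce to checking
$$\iint_{\Gamma(0)} \left(\frac{t^\alpha}{(t+|u+y|)^{n+\alpha+k}}\right)^{\!q} \frac{dy\,dt}{t^{n+1}} \le \frac{C}{|u|^{(n+k)q}}, \qquad k \in \{0,1\}.$$
I would handle this by splitting $\Gamma(0)$ into the regions $\{t \le |u|/2\}$, on which $|u+y| \ge |u|/2$ (using $|y| < t \le |u|/2$), and $\{t > |u|/2\}$, on which the denominator in the integrand is of size $t$; in each region the $y$-integration contributes a factor $\sim t^n$, leaving an elementary power integral in $t$. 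The main (mild) obstacle is bookkeeping across this two-region split; the computation is otherwise routine, and as in Proposition \ref{prop5} the detailed verification may be left to the reader.
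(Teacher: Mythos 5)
Your proposal is correct and follows essentially the same route as the paper: the same translation $y\mapsto x+y$, the same convolution kernel (the paper writes it as $K_{y,t}(x,z)$, you as $\mathcal{K}(x-z)(y,t)$, a cosmetic difference), and the same reduction to the standard Calder\'on--Zygmund size and smoothness bounds on that kernel. The paper leaves the verification of those bounds to the reader; your pointwise estimate $|\partial_t^\alpha P_t(w)|\le C(t+|w|)^{-n-\alpha}$ together with the two-region split of $\Gamma(0)$ at $t=|u|/2$ is precisely the computation that is being omitted, and both pieces check out.
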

\begin{proof}
Assume that $m-1\leq \alpha<m$ for some positive integer $m.$ For
any $f\in S(\mathbb{R}^n)\otimes\mathbb{B}$, by changing of
variables we have
$$S_\alpha^q(f)(x)
=\left\|t^\alpha \partial_t^\alpha \mathcal
{P}_tf(x+y)\right\|_{L_\mathbb{B}^q\left(\Gamma(0),
\frac{dydt}{t^{n+1}}\right)}
=\left\|\int_{\mathbb{R}^n}K_{y,t}(x,z)f(z)dz\right\|_{L_\mathbb{B}^q\left(\Gamma(0),
\frac{dydt}{t^{n+1}}\right)},$$ where
$$K_{y,t}(x,z)=\frac{\mathbf{\Gamma}(\frac{n+1}{2})}{\pi^{\frac{n+1}{2}}\mathbf{\Gamma}(m-\alpha)}
         t^\alpha \int_0^\infty{\partial_t^m}
         \left(\frac{t+s}{\left((t+s)^2+|x+y-z|^2\right)^{\frac{n+1}{2}}}\right)
         {s^{m-\alpha-1}}{ds},\quad x,y,z\in \mathbb{R}^n, t>0.$$
It can be proved that
\begin{equation*}
\label{3.2} \left\|K_{y,t}(x,z)\right\|_{\mathcal
{L}\left(\mathbb{B}, L_{\mathbb{B}}^q\left(\Gamma(0),
\frac{dydt}{t^{n+1}}\right)\right)}\leq C\frac{1}{|x-z|^n},
\end{equation*}
and
\begin{equation*}
\label{3.3}\left\|\nabla_x K_{y,t}(x,z)\right\|_{\mathcal
{L}\left(\mathbb{B},L_\mathbb{B}^q(\Gamma(0),\frac{dydt}{t^{n+1}})\right)}+\left\|\nabla_z
K_{y,t}(x,z)\right\|_{\mathcal
{L}\left(\mathbb{B},L_\mathbb{B}^q(\Gamma(0),\frac{dydt}{t^{n+1}})\right)}
\leq C\frac{1}{|x-z|^{n+1}},
\end{equation*}
for any $x,z \in \mathbb{R}^n, x\neq z.$ We leave the details of the
proof to the reader.
\end{proof}

Together with  Proposition \ref{prop5}, Proposition \ref{lem3} and
Remark \ref{CZ}, we can immediately get the following theorem for
$g_\alpha^q$ and $S_\alpha^q$ with $1<q<\infty$ and
$0<\alpha<\infty.$
\begin{thm}
\label{thm3} Given a Banach space $\mathbb{B},$ $1<q<\infty$ and
$0<\alpha<\infty,$  let $U$ be either the fractional
Littlewood--Paley $g$-function $g_\alpha^q$  or the fractional area
function $S^q_\alpha$, then the following statements are equivalent:
\begin{enumerate}
\item[\textup{(i)}] $U$ maps $L_{c, \mathbb{B}}^\infty(\mathbb{R}^n)$
into $BMO(\mathbb{R}^n).$
\item[\textup{(ii)}] $U$ maps
$H_{\mathbb{B}}^1(\mathbb{R}^n)$ into $L^1(\mathbb{R}^n).$
\item[\textup{(iii)}] $U$ maps $L_{\mathbb{B}}^p(\mathbb{R}^n)$
into $L^p(\mathbb{R}^n)$ for any (or, equivalently, for some) $p\in
(1, \infty).$
\item[\textup{(iv)}] $U$ maps $BMO_{c,
\mathbb{B}}(\mathbb{R}^n)$ into $BMO(\mathbb{R}^n).$
\item[\textup{(v)}] $U$ maps
$L_{\mathbb{B}}^1(\mathbb{R}^n)$ into $L^{1, \infty}(\mathbb{R}^n).$
\end{enumerate}
\end{thm}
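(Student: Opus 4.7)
The strategy is to represent $U$ as the $\mathbb{B}_2$-norm of a vector-valued Calder\'on--Zygmund operator and then invoke Remark \ref{CZ}. By Propositions \ref{prop5} and \ref{lem3}, we may write $U(f)(x) = \|Tf(x)\|_{\mathbb{B}_2}$, where $T$ is a Calder\'on--Zygmund operator with regular vector-valued kernel acting from $\mathbb{B}$-valued into $\mathbb{B}_2$-valued functions, and
$$\mathbb{B}_2 = L_\mathbb{B}^q\bigl(\mathbb{R}_+, \tfrac{dt}{t}\bigr) \text{ when } U = g_\alpha^q, \qquad \mathbb{B}_2 = L_\mathbb{B}^q\bigl(\Gamma(0), \tfrac{dydt}{t^{n+1}}\bigr) \text{ when } U = S_\alpha^q.$$
Applying Remark \ref{CZ} with $\mathbb{B}_1 = \mathbb{B}$ and this $\mathbb{B}_2$ gives the equivalence of the five vector-valued conditions (i$'$)--(v$'$) for $T$.

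The plan is then to transfer the equivalences from $T$ back to $U$ by exploiting the exact identities
$$\|U(f)\|_{L^p(\mathbb{R}^n)} = \|Tf\|_{L_{\mathbb{B}_2}^p(\mathbb{R}^n)}, \qquad \|U(f)\|_{L^{1,\infty}(\mathbb{R}^n)} = \|Tf\|_{L_{\mathbb{B}_2}^{1,\infty}(\mathbb{R}^n)},$$
which immediately give (iii) $\Leftrightarrow$ (iii$'$) and (v) $\Leftrightarrow$ (v$'$); applied at $p=1$ on atoms and combined with the linearity of $T$, the corresponding $L^1$ identity gives (ii) $\Leftrightarrow$ (ii$'$). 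For (i) and (iv), the reverse triangle inequality in $\mathbb{B}_2$ yields only the one-sided bound $\|U(f)\|_{BMO(\mathbb{R}^n)} \le \|Tf\|_{BMO_{\mathbb{B}_2}(\mathbb{R}^n)}$, so we get (i$'$) $\Rightarrow$ (i) and (iv$'$) $\Rightarrow$ (iv) for free. Combined with Remark \ref{CZ}, we thus obtain that (ii), (iii), (v) are pairwise equivalent and each implies (i) and (iv).

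The remaining task, and the main technical obstacle, is to close the cycle by proving (i) $\Rightarrow$ (iii) and (iv) $\Rightarrow$ (iii). The inclusion $L_{c,\mathbb{B}}^\infty \subset BMO_{c,\mathbb{B}}$ reduces (iv) $\Rightarrow$ (iii) to (i) $\Rightarrow$ (iii). For the latter I would use a Fefferman--Stein sharp-maximal-function argument adapted to the square function $U$: using the Calder\'on--Zygmund kernel estimates established in Propositions \ref{prop5} and \ref{lem3}, one shows the pointwise bound $(Uf)^\#(x) \le C\, M(\|f\|_\mathbb{B})(x)$, where $^\#$ denotes the sharp maximal operator and $M$ the Hardy--Littlewood maximal operator; combined with (i) to ensure local integrability of $Uf$, the standard Fefferman--Stein inequality $\|g\|_{L^p} \le C\|g^\#\|_{L^p}$ and the $L^p$-boundedness of $M$ then yield (iii) at some, and hence (by the already established transfer via Remark \ref{CZ}) every, $p \in (1,\infty)$.
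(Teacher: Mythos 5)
Your reduction to Remark \ref{CZ} via Propositions \ref{prop5} and \ref{lem3} is exactly the paper's intended route (the authors' own brief justification, commented out in the source, is precisely ``express $U$ as a norm of a CZ operator and apply Remark \ref{CZ}''), and you are right that the identities
$$\|U(f)\|_{L^p}=\|Tf\|_{L^p_{\mathbb{B}_2}},\qquad
\|U(f)\|_{L^{1,\infty}}=\|Tf\|_{L^{1,\infty}_{\mathbb{B}_2}},\qquad
\|U(f)\|_{L^1}=\|Tf\|_{L^1_{\mathbb{B}_2}}$$
transfer conditions (ii), (iii), (v) exactly. You also correctly spot the asymmetry in (i) and (iv): because $U f=\|Tf\|_{\mathbb{B}_2}$ is scalar, scalar $BMO(\mathbb{R}^n)$ in the theorem is a strictly weaker target than $BMO_{\mathbb{B}_2}(\mathbb{R}^n)$ in Remark \ref{CZ}, and the reverse triangle inequality only gives (i$'$) $\Rightarrow$ (i), (iv$'$) $\Rightarrow$ (iv). This is a real subtlety that the paper does not acknowledge. (Minor point: for (ii) $\Leftrightarrow$ (ii$'$) you don't need atoms or linearity beyond the pointwise identity $\|Tf(x)\|_{\mathbb{B}_2}=Uf(x)$.)

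Where the argument breaks down is in your proposed fix for (i) $\Rightarrow$ (iii). The claimed pointwise bound $(Uf)^{\#}(x)\le C\,M(\|f\|_{\mathbb{B}})(x)$ does not follow from the kernel regularity estimates of Propositions \ref{prop5} and \ref{lem3} together with hypothesis (i). In the usual Fefferman--Stein derivation one splits $f=f\chi_{2Q}+f\chi_{(2Q)^c}$: the kernel smoothness handles the far part, but the near part requires bounding an average such as $\frac{1}{|Q|}\int_Q \|T(f\chi_{2Q})(y)\|_{\mathbb{B}_2}\,dy$, and the only known way to get this below $C M_r(\|f\|_{\mathbb{B}})(x)$ is to use a priori $L^{p_0}$-boundedness of $T$ for some $p_0\in(1,\infty)$ --- precisely the conclusion (iii) you are trying to reach. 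Hypothesis (i) only controls the BMO seminorm of $U(f\chi_{2Q})$, not its average over $Q$, so it does not close the gap. Moreover, even granting the sharp bound, the Fefferman--Stein inequality $\|g\|_{L^p}\le C\|g^{\#}\|_{L^p}$ needs more than local integrability of $g$ (one needs $g\in L^{p_0}$ for some finite $p_0$, or comparable decay), which the phrase ``combined with (i) to ensure local integrability of $Uf$'' does not supply. So as written the step (i) $\Rightarrow$ (iii) is either circular or incomplete, and a different mechanism (e.g.\ a duality argument that feeds back into one of the conditions (ii), (iii), (v) on the linear operator $T$, or a telescoping/localization argument exploiting both the BMO bound from (i) and the kernel decay) is needed to close the cycle.
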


\begin{thm}
\label{thm4}  Given a Banach space $\mathbb{B}$ and $2\leq
q<\infty$, the following statements are equivalent:
\begin{enumerate}
\item[\textup{(i)}] $\mathbb{B}$ is of Lusin cotype q.
\item[\textup{(ii)}] For every (or, equivalently, for some) positive integer $n,$
for every (or, equivalently, for some) $p\in(1, \infty)$, and for
every (or, equivalently, for some) $\alpha>0,$ there is a constant
$C>0$ such that
$$\left\|S_{\alpha}^q(f)\right\|_{L^p(\mathbb{R}^n)}\leq
C\|f\|_{L_\mathbb{B}^p(\mathbb{R}^n)},\quad \forall f\in
L_\mathbb{B}^p(\mathbb{R}^n).$$
\end{enumerate}
\end{thm}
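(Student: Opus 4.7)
The plan is to reduce Theorem \ref{thm4} to Theorem A via the classical Fubini identity linking the area function and the $g$-function, and then to use Theorem \ref{thm3} to propagate $L^p$-boundedness across all $p \in (1,\infty)$ and $\alpha > 0$.

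The key observation is that for any $f \in L^p_\mathbb{B}(\mathbb{R}^n)$, by Fubini's theorem,
\begin{align*}
\|S_\alpha^q(f)\|_{L^q(\mathbb{R}^n)}^q
&= \int_{\mathbb{R}^n}\iint_{\Gamma(x)}\|t^\alpha\partial_t^\alpha\mathcal{P}_tf(y)\|_\mathbb{B}^q\frac{dy\,dt}{t^{n+1}}\,dx\\
&= \iint_{\mathbb{R}_+^{n+1}}\|t^\alpha\partial_t^\alpha\mathcal{P}_tf(y)\|_\mathbb{B}^q \frac{|\{x:|x-y|<t\}|}{t^{n+1}}\,dy\,dt\\
&= c_n\int_{\mathbb{R}^n} g_\alpha^q(f)(y)^q\,dy = c_n\|g_\alpha^q(f)\|_{L^q(\mathbb{R}^n)}^q,
\end{align*}
where $c_n$ is the volume of the unit ball in $\mathbb{R}^n$. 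Thus $S_\alpha^q$ and $g_\alpha^q$ are equivalent as operators from $L^q_\mathbb{B}(\mathbb{R}^n)$ to $L^q(\mathbb{R}^n)$.

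For (i) $\Rightarrow$ (ii), assume $\mathbb{B}$ is of Lusin cotype $q$. Since $q\ge 2 > 1$, Theorem A gives $\|g_\alpha^q(f)\|_{L^q(\mathbb{R}^n)}\le C\|f\|_{L^q_\mathbb{B}(\mathbb{R}^n)}$ for every $\alpha>0$ (the symmetric diffusion semigroup being the heat semigroup on $\mathbb{R}^n$). Combining with the Fubini identity above, $S_\alpha^q$ is bounded from $L^q_\mathbb{B}(\mathbb{R}^n)$ to $L^q(\mathbb{R}^n)$. Since by Proposition \ref{lem3} the area function $S_\alpha^q$ is an $L^q_\mathbb{B}(\Gamma(0),\frac{dy\,dt}{t^{n+1}})$-norm of a Calder\'on--Zygmund operator with regular vector-valued kernel, Theorem \ref{thm3} now promotes boundedness from a single exponent $q$ to every $p\in(1,\infty)$.

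For (ii) $\Rightarrow$ (i), assume $S_\alpha^q$ is bounded from $L^{p_0}_\mathbb{B}(\mathbb{R}^n)$ to $L^{p_0}(\mathbb{R}^n)$ for some $p_0 \in (1,\infty)$ and some $\alpha_0 > 0$. By Theorem \ref{thm3}, this $L^{p_0}$-boundedness extends to every $p \in (1, \infty)$; in particular it holds for $p=q$. The Fubini identity then yields $\|g_{\alpha_0}^q(f)\|_{L^q(\mathbb{R}^n)}\le C\|f\|_{L^q_\mathbb{B}(\mathbb{R}^n)}$, so Theorem A concludes that $\mathbb{B}$ is of Lusin cotype $q$. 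The equivalence across all $\alpha$ and all $p$ in the statement of (ii) is then automatic: once $\mathbb{B}$ is of Lusin cotype $q$, the forward direction recovers (ii) for every $\alpha>0$ and every $p\in(1,\infty)$.

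There is no real obstacle here beyond correctly identifying the Fubini bridge $\|S_\alpha^q f\|_{L^q}^q = c_n\|g_\alpha^q f\|_{L^q}^q$, which provides the single $L^q$-equivalence needed to convert Theorem A into Theorem \ref{thm4}; the two Calder\'on--Zygmund machines Proposition \ref{lem3}/Theorem \ref{thm3} and Theorem A handle the rest. The only point demanding care is verifying the kernel estimates underlying Proposition \ref{lem3}, but these are asserted (with a reference) in the statement of that proposition, so they may be used freely.
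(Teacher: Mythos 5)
Your proof of $(i)\Rightarrow(ii)$ coincides with the paper's: the Fubini identity
$\|S_\alpha^q(f)\|_{L^q}^q = c_n\|g_\alpha^q(f)\|_{L^q}^q$ gives the $L^q$-case from Theorem A, and Proposition~\ref{lem3} together with Theorem~\ref{thm3} promotes it to all $p\in(1,\infty)$. For $(ii)\Rightarrow(i)$, however, you take a genuinely different route from the paper, and it is worth noting the trade-off. The paper proves the \emph{pointwise} domination
\begin{equation*}
g_\alpha^q(f)(x)\le C\,S_\alpha^q(f)(x),\qquad x\in\mathbb{R}^n,
\end{equation*}
by observing that $\partial_t^\alpha\mathcal{P}_tf$ is harmonic in the upper half-space, applying the mean-value property on a ball $B(0,t)\subset\Gamma(0)$ and H\"older's inequality, and then integrating. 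This gives $\|g_\alpha^q f\|_{L^p}\le C\|S_\alpha^q f\|_{L^p}$ for the \emph{same} $p$ as in the hypothesis, and one then cites Theorem A directly. You instead invoke the Calder\'on--Zygmund machinery of Theorem~\ref{thm3} a second time to move the hypothesized $L^{p_0}$-bound of $S_\alpha^q$ to $p=q$, and then use the Fubini identity once more to get the $L^q$-bound for $g_\alpha^q$; that is also a correct and self-contained argument. Each route has a small advantage: yours is shorter and avoids the harmonicity/mean-value step entirely, whereas the paper's pointwise estimate \eqref{3.5} is not merely a means to Theorem~\ref{thm4} but is reused verbatim in the proofs of Theorem~\ref{thm5} and part $(iv)\Rightarrow(i)$ of Theorem~C, where a norm-level argument at a single exponent would not suffice. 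So while your proof of Theorem~\ref{thm4} in isolation is sound, the pointwise inequality is still needed elsewhere in the paper, and it would be natural to record it even if one adopts your shortcut for the theorem at hand.
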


\begin{proof}  $\textup{(i)}\Rightarrow \textup{(ii)}.$ By Fubini's theorem,
we have
\begin{multline}\label{3.12}
\left\|S_\alpha^q(f)\right\|_{L^q(\mathbb{R}^n)}^q =
\int_{\mathbb{R}^n}\int_0^\infty \|t^\alpha\partial_t^\alpha
       \mathcal{P}_tf(y)\|_\mathbb{B}^q\left(\int_{\mathbb{R}^n}\chi_{|x-y|<t}dx
       \right)\frac{dydt}{t^{n+1}}\\
   = \int_{\mathbb{R}^n}\int_0^\infty \|t^\alpha\partial_t^\alpha
       \mathcal{P}_tf(y)\|_\mathbb{B}^q\frac{dydt}{t}
   = \|g_\alpha^q(f)\|_{L^q(\mathbb{R}^n)}^q.
\end{multline}
Since $\mathbb{B}$ is of Lusin cotype $q$, by \eqref{3.12} and
Theorem A we get
$$\|S_\alpha^q(f)\|_{L^q(\mathbb{R}^n)}=\|g_{\alpha}^q(f)\|_{L^q(\mathbb{R}^n)}\leq
C\|f\|_{L_\mathbb{B}^q(\mathbb{R}^n)},\quad \forall f\in
L_\mathbb{B}^q(\mathbb{R}^n).$$ Hence, by Theorem \ref{thm3}
$$\|S_\alpha^q(f)\|_{L^p(\mathbb{R}^n)}\leq
C\|f\|_{L_\mathbb{B}^p(\mathbb{R}^n)},\quad \forall f\in
L_\mathbb{B}^p(\mathbb{R}^n), 1< p< \infty.$$

$\textup{(ii)}\Rightarrow \textup{(i)}.$ We only need prove that
there exists a constant $C$ such that
\begin{equation}
\label{3.5} g_\alpha^q(f)(x)\leq CS_\alpha^q(f)(x), \quad \forall
x\in \mathbb{R}^n,
\end{equation}
for a big enough class of nice functions in
$L_\mathbb{B}^p(\mathbb{R}^n).$ Then we have $\displaystyle
\left\|g_\alpha^q(f)\right\|_{L^p(\mathbb{R}^n)}\leq
C\|f\|_{L_\mathbb{B}^p(\mathbb{R}^n)}.$ By Theorem A, $\mathbb{B}$
is of Lusin cotype $q.$

Now, let us prove \eqref{3.5}. We shall follow those ideas in
\cite{Stein2}. It suffices to prove it for $x=0$. Let us denote by
$B(0,t)$ the ball in $\mathbb{R}^{n+1}$ centered at $(0,t)$ and
tangent to the boundary of the cone $\Gamma(0).$ Then the radius of
$B(0,t)$ is $ \frac{\sqrt{2}}{2}t.$ Now the partial derivative
$\partial_t^\alpha \mathcal{P}_tf(x)$ is, like $\mathcal{P}_tf(x)$,
harmonic function. Thus by the mean-value theorem, we have
$$\partial_t^\alpha
\mathcal{P}_tf(0)=\frac{1}{|B(0,t)|}\iint_{B(0,t)}\partial_s^\alpha
\mathcal{P}_sf(x)dxds.$$ By H\"{o}lder's inequality,
\begin{eqnarray*}
\|\partial_t^\alpha \mathcal{P}_tf(0)\|_\mathbb{B}
  &\leq& \frac{1}{|B(0,t)|}\iint_{B(0,t)}\|\partial_s^\alpha
         \mathcal{P}_sf(x)\|_{\mathbb{B}}dxds \nonumber
  \leq \frac{1}{|B(0,t)|^{\frac{1}{q}}}\left(\iint_{B(0,t)}\|\partial_s^\alpha
         \mathcal{P}_sf(x)\|_{\mathbb{B}}^qdxds\right)^{\frac{1}{q}}.
\end{eqnarray*}
Integrating this inequality, we obtain
\begin{multline*}
\int_0^\infty t^{\alpha q}\|\partial_t^\alpha
\mathcal{P}_tf(0)\|_\mathbb{B}^q\frac{dt}{t}
  \leq C\int_0^\infty t^{\alpha q-n-2}\iint_{B(0,t)}\|\partial_s^\alpha
         \mathcal{P}_sf(x)\|_{\mathbb{B}}^qdxdsdt \nonumber\\
  \leq C\iint_{\Gamma(0)}\left(\int_{c_1s}^{c_2s}t^{\alpha q-n-2}dt\right)\|\partial_s^\alpha
         \mathcal{P}_sf(x)\|_{\mathbb{B}}^qdxds \nonumber
  \leq C\iint_{\Gamma(0)}\left\|s^\alpha \partial_s^\alpha
         \mathcal{P}_sf(x)\right\|_{\mathbb{B}}^q\frac{dxds}{s^{n+1}}
\end{multline*}
by using Fubini's theorem and $(x,s)\in B(0,t)$ implying $c_1s\leq
t\leq c_2s$, for two positive constants $c_1$ and $c_2$. Hence, we
get inequality \eqref{3.5}.
\end{proof}

\begin{thm}
\label{thm5} Given a Banach space $\mathbb{B}$ and $1<q\leq 2,$ the
following statements are equivalent:
\begin{enumerate}
\item[\textup{(i)}] $\mathbb{B}$ is of Lusin type q.
\item[\textup{(ii)}] For every (or, equivalently, for some) positive integer $n$, for
every (or, equivalently, for some) $p\in(1, \infty)$, and for every
(or, equivalently, for some) $\alpha>0,$ there is a constant $C>0$
such that
\begin{equation*}
\|f\|_{L_\mathbb{B}^p(\mathbb{R}^n)}\leq C
\left\|S_{\alpha}^q(f)\right\|_{L^p(\mathbb{R}^n)},\quad \forall
f\in L_\mathbb{B}^p(\mathbb{R}^n).
\end{equation*}
\end{enumerate}
\end{thm}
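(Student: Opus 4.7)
The plan is to mirror Theorem B, exploiting the tight connection between $S_\alpha^q$ and $g_\alpha^q$ together with the cotype analogue, Theorem \ref{thm4}, which has already been established.

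For the implication (i) $\Rightarrow$ (ii), I would proceed by duality. Since $E_0=0$ on $\mathbb{R}^n$, the polarization identity \eqref{02.22}, applied to a nice pair $f\in L^p_\mathbb{B}(\mathbb{R}^n)$ and $h\in L^{p'}_{\mathbb{B}^*}(\mathbb{R}^n)$ taken from the dense subspaces $(L^p\cap L^2)\otimes\mathbb{B}$ and $(L^{p'}\cap L^2)\otimes\mathbb{B}^*$ exactly as in the proof of Theorem B, reads
\begin{equation*}
\int_{\mathbb{R}^n}\langle f,h\rangle\,dx
=\frac{4^\alpha}{\mathbf{\Gamma}(2\alpha)}\iint_{\mathbb{R}^{n+1}_+}\bigl\langle t^\alpha\partial_t^\alpha\mathcal{P}_tf(x),\,t^\alpha\partial_t^\alpha\mathcal{P}_th(x)\bigr\rangle\,\frac{dx\,dt}{t}.
\end{equation*}
The upper-half-space integral is converted into an integral over cones by the Fubini identity
\begin{equation*}
\iint_{\mathbb{R}^{n+1}_+}F(x,t)\,\frac{dx\,dt}{t}
=\frac{1}{c_n}\int_{\mathbb{R}^n}\iint_{\Gamma(y)}F(x,t)\,\frac{dx\,dt}{t^{n+1}}\,dy,
\end{equation*}
valid because $|\{y:|x-y|<t\}|=c_n t^n$. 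Two successive applications of H\"older's inequality (first with exponents $q,q'$ on $\Gamma(y)$, then $p,p'$ on $\mathbb{R}^n$) yield
\begin{equation*}
\Bigl|\int_{\mathbb{R}^n}\langle f,h\rangle\,dx\Bigr|
\le C\,\|S_\alpha^q(f)\|_{L^p(\mathbb{R}^n)}\,\|S_\alpha^{q'}(h)\|_{L^{p'}(\mathbb{R}^n)}.
\end{equation*}
Since $\mathbb{B}$ is of Lusin type $q$, its dual $\mathbb{B}^*$ is of Lusin cotype $q'$; Theorem \ref{thm4} applied to $\mathbb{B}^*$ then gives $\|S_\alpha^{q'}(h)\|_{L^{p'}}\le C\|h\|_{L^{p'}_{\mathbb{B}^*}}$. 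Taking the supremum over $h$ of unit norm delivers (ii).

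For (ii) $\Rightarrow$ (i), I would reduce to Theorem B at the exponent $p=q$. Fubini applied to the definition of $S_\alpha^q$ and the same volume formula give
\begin{equation*}
\|S_\alpha^q(f)\|_{L^q(\mathbb{R}^n)}^q
=\int_{\mathbb{R}^n}\iint_{\Gamma(x)}\|t^\alpha\partial_t^\alpha\mathcal{P}_tf(y)\|_\mathbb{B}^q\,\frac{dy\,dt}{t^{n+1}}\,dx
=c_n\,\|g_\alpha^q(f)\|_{L^q(\mathbb{R}^n)}^q,
\end{equation*}
so at $p=q$ the hypothesis in (ii) translates to $\|f\|_{L^q_\mathbb{B}(\mathbb{R}^n)}\le C\|g_\alpha^q(f)\|_{L^q(\mathbb{R}^n)}$. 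Since $E_0=0$ on $\mathbb{R}^n$, this is precisely statement (ii) of Theorem B for the classical Poisson semigroup at $p=q$, and Theorem B then delivers Lusin type $q$ for $\mathbb{B}$. The ``for every, or equivalently, for some'' equivalence built into the statement is then closed by combining this with the direction (i) $\Rightarrow$ (ii) just proved.

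The main technical obstacle is bookkeeping: the polarization identity \eqref{02.22} is a statement about $L^2$ pairings and must be transported to the $L^p$-$L^{p'}$ duality by the standard dense approximation, exactly as in the proof of Theorem B. Once this is in place, the Fubini--tent identity $|\{y:|x-y|<t\}|=c_n t^n$ cleanly trades the $g$-function for the area function at $p=q$, and the remaining work is simply to invoke Theorem \ref{thm4} on $\mathbb{B}^*$ in one direction and Theorem B on $\mathbb{R}^n$ in the other.
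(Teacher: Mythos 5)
Your direction (i) $\Rightarrow$ (ii) is correct but takes a more roundabout route than the paper. The paper simply combines Theorem~B with the pointwise estimate $g_\alpha^q(f)(x)\le C\,S_\alpha^q(f)(x)$ (established in the proof of Theorem~\ref{thm4} via the mean-value property of the harmonic function $\partial_t^\alpha\mathcal{P}_t f$), which immediately gives $\|f\|_{L^p_\mathbb{B}}\le C\|g_\alpha^q(f)\|_{L^p}\le C\|S_\alpha^q(f)\|_{L^p}$. Your argument through polarization, the Fubini/tent identity, two H\"older applications, and Theorem~\ref{thm4} applied to $\mathbb{B}^*$ arrives at the same conclusion and is sound, but you re-derive work the paper has already packaged into \eqref{3.5}.

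The genuine gap is in (ii) $\Rightarrow$ (i). Your argument requires the hypothesis at the particular exponent $p=q$, because the identity $\|S_\alpha^q(f)\|_{L^q}^q=c_n\|g_\alpha^q(f)\|_{L^q}^q$ is only valid when the outer exponent matches the inner one. The statement of the theorem, however, asserts that (i) follows from the inequality holding ``for some $p\in(1,\infty)$'' --- and this $p$ need not equal $q$. Your closing remark that the ``for every/for some'' equivalence is then ``closed by combining this with the direction (i) $\Rightarrow$ (ii) just proved'' does not close the loop: from (i) you get (ii) for all $p$, and from (ii) at $p=q$ you get (i), but nothing you have shown lets you pass from (ii) at a single $p\neq q$ to (i). Lower bounds of the form $\|f\|_{L^p}\lesssim\|S_\alpha^q f\|_{L^p}$ do not transfer across $p$ by Calder\'on--Zygmund interpolation the way upper bounds do. The paper handles arbitrary $p$ by dualizing the hypothesis to obtain the \emph{upper} bound $\|S_\alpha^{q'}(g)\|_{L^{p'}}\le C\|g\|_{L^{p'}_{\mathbb{B}^*}}$ for $\mathbb{B}^*$, which requires showing that the operator $b\mapsto S_\alpha^q(G(b))$ is bounded on the relevant $L^p$ space via the Calder\'on--Zygmund machinery for the kernel $s^\alpha t^\alpha\partial_u^{2\alpha}\mathcal{P}_u|_{u=s+t}$; one then applies Theorem~\ref{thm4} to $\mathbb{B}^*$. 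You need this duality-plus-CZ step, or some substitute, to treat $p\neq q$.
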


\begin{proof} $\textup{(i)}\Rightarrow \textup{(ii)}.$ Since $\mathbb{B}$ is
of Lusin type $q,$ by Theorem B and \eqref{3.5} we have
\begin{eqnarray*}
\|f\|_{L_\mathbb{B}^p(\mathbb{R}^n)}
   \leq C\left\|g_{\alpha}^q(f)\right\|_{L^p(\mathbb{R}^n)}
   \leq C\left\|S_{\alpha}^q(f)\right\|_{L^p(\mathbb{R}^n)}.
\end{eqnarray*}

$\textup{(ii)}\Rightarrow \textup{(i)}.$  We shall prove
$\|S_\alpha^{q'} (g) \|_ {L^{p'}(\mathbb{R}^n)} \le C \| g
\|_{L^{p'}_{\mathbb{B}^*}(\mathbb{R}^n)}.$ We can choose $b \in
L^p_{L^q_{\mathbb{B}}\left(\Gamma(0),\frac{dzdt}{t^{n+1}}\right)}(\mathbb{R}^n)$
of unit norm such that
\begin{eqnarray*}
\|S_\alpha^{q'} (g) \|_ {L^{p'}(\mathbb{R}^n)} &=&  \| t^\alpha
\partial_t^\alpha \mathcal{P}_t g(y-z)
\|_{L^{p'}_{L^{q'}_{\mathbb{B}}\left(\Gamma(0),\frac{dzdt}{t^{n+1}}\right)}(\mathbb{R}^n)}
\\ &= &
\int_{\mathbb{R}^n} \int_{\Gamma(0)}\langle t^\alpha \partial_t^\alpha \mathcal{P}_t g(y-z),   b(y,z,t) \rangle \frac{dz dt }{t^{n+1}} dy  \\
&=& \int_{\mathbb{R}^n} \int_{\Gamma(0)} \Big\langle
\int_{\mathbb{R}^n}t^\alpha\partial_t^\alpha \mathcal{P}_t
(y-z-\tilde{z})\,
 g(\tilde{z})\, d\tilde{z} ,  b(y,z,t) \Big\rangle \frac{dz dt }{t^{n+1}} dy \\
&=& \int_{\mathbb{R}^n} \Big\langle g(\tilde{z}),
\int_{\Gamma(0)}\int_{\mathbb{R}^n} t^\alpha \partial_t^\alpha
\mathcal{P}_t (y-z-\tilde{z}) b(y,z,t)  dy  \frac{dz dt }{t^{n+1}}  \Big\rangle\,  d\tilde{z} \\
&\le & \|g \|_{L^{p'}_{\mathbb{B}^*}(\mathbb{R}^n)} \|
G(b)\|_{L^{p}_{\mathbb{B}}(\mathbb{R}^n)} \le \|g
\|_{L^{p'}_{\mathbb{B}^*}(\mathbb{R}^n)} \|S_\alpha^q
(G(b))\|_{L^{p}_{\mathbb{B}}(\mathbb{R}^n)},
\end{eqnarray*}
where $\displaystyle G(b) (\tilde{z}) = \int_{\Gamma(0)}
\int_{\mathbb{R}^n} t^\alpha \partial_t^\alpha \mathcal{P}_t
(y-z-\tilde{z}) b(y,z,t)  dy  \frac{dz dt }{t^{n+1}} $ and in the
last inequality we used the hypothesis. Let us observe that we will
have proved the result as soon as we prove $\|S_\alpha^q
(G(b))\|_{L^{p}_{\mathbb{B}}(\mathbb{R}^n)}  \le C\|b\|_{L^p_{
L^q_\mathbb{B}\left(\Gamma(0), \frac{dz
dt}{t^{n+1}}\right)}(\mathbb{R}^n)}$. We shall prove this by
following a parallel argument to the proof of $(ii) \implies (i)$ in
Theorem B and we also borrow the ideal from \cite{OX}. Observe that
\begin{align*}
&S_\alpha^q( G(b) ) (x) \\&=\Big(\int_{\Gamma(0)} \Big\| s^\alpha
\partial_s^\alpha \mathcal{P}_s \Big( \int_{\Gamma(0)}   \int_{\mathbb{R}^n}
t^\alpha \partial_t^\alpha \mathcal{P}_t (y-z-\cdot)
b(y,z,t)  dy  \frac{dz dt }{t^{n+1}} \Big)( x+u   ) \Big\|_\mathbb{B}^q \frac{du ds}{s^{n+1}}\Big)^{1/q} \\
&= \Big(\int_{\Gamma(0)} \Big\| s^\alpha \partial_s^\alpha \mathcal{P}_s \Big(
\int_{\Gamma(0)}
  \int_{\mathbb{R}^n} t^\alpha \partial_t^\alpha \mathcal{P}_t (-y+z+\cdot) b(y,z,t)  dy
   \frac{dz dt }{t^{n+1}} \Big)(x+u) \Big\|_\mathbb{B}^q \frac{du ds}{s^{n+1}}\Big)^{1/q} \\
&= \Big(\int_{\Gamma(0)} \Big\|\Big( \int_{\Gamma(0)}
\int_{\mathbb{R}^n} s^\alpha \partial_s^\alpha \mathcal{P}_s
t^\alpha \partial_t^\alpha \mathcal{P}_t (-y+z+x+u) b(y,z,t)dy\frac{dz dt }{t^{n+1}}
 \Big) \Big\|_\mathbb{B}^q \frac{du ds}{s^{n+1}}\Big)^{1/q} \\
 &= \Big(\int_{\Gamma(0)} \Big\|
 \Big( \int_{\Gamma(0)}   \int_{\mathbb{R}^n} s^\alpha t^\alpha \partial_u^{2\alpha}
 \mathcal{P}_u\big|_{u=s+t}  (-y+z+x+u) b(y,z,t) dy  \frac{dz dt }{t^{n+1}} \Big) \Big\|_\mathbb{B}^q \frac{du ds}{s^{n+1}}\Big)^{1/q}.
\end{align*}
It is an easy exercise to  prove that
$$| s^\alpha t^\alpha \partial_u^{2\alpha} \mathcal{P}_u|_{u=s+t} | \le C \frac{s^\alpha t^\alpha}{ (s+t+|x|)^{n+2\alpha}}.$$
In this circumstances, it can be proved that the operator
$$ b \longrightarrow \mathcal{U}(b)(x,u,s)=\int_{\mathbb{R}^n}\int_{\Gamma(0)}
s^\alpha t^\alpha \partial_u^{2\alpha} \mathcal{P}_u|_{u=s+t}
(-y+z+x+u) b(y,z,t)\frac{dz dt }{t^{n+1}} dy $$ can be handled by
using Calder\'on--Zygmund techniques and $\mathcal{U}$ is bounded on
$L^p_{L^q_\mathbb{B}(\Gamma(0),
\frac{duds}{s^{n+1}})}(\mathbb{R}^n)$ for every $1<p,q < \infty$ and
every Banach space $\mathbb{B}$, see the details in \cite[Section
2]{OX}. The proof of the theorem ends by observing that
$\displaystyle S_\alpha^q( G(b) )  = \|
\mathcal{U}(b)\|_{L^q_\mathbb{B}(\Gamma(0), \frac{duds}{s^{n+1}})}$.
\end{proof}

Now, let us consider the relationship between the geometry
properties of the Banach space $\mathbb{B}$ and the fractional
$g_\lambda^*$-function $g_{\lambda, \alpha}^{q,*}$.

\begin{thm}\label{thm7}  Given a Banach space $\mathbb{B}$, $2\leq q<\infty$ and $\lambda>1$, the following statements are
equivalent:
\begin{enumerate}
\item[\textup{(i)}] $\mathbb{B}$ is of Lusin cotype $q$.
\item[\textup{(ii)}] For every (or, equivalently, for some) positive integer
$n,$ for every (or, equivalently, for some) $p\in [q, \infty)$, and
for every (or, equivalently, for some) $\alpha>0,$ there is a
constant $C>0$ such that
$$\left\|g_{\lambda, \alpha}^{q,
*}(f)\right\|_{L^p(\mathbb{R}^n)}\leq
C\|f\|_{L_\mathbb{B}^p(\mathbb{R}^n)},\quad \forall f\in
L_\mathbb{B}^p(\mathbb{R}^n).$$
\end{enumerate}
\end{thm}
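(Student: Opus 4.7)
The easy direction is $\textup{(ii)}\Rightarrow\textup{(i)}$. For any $x\in\mathbb{R}^n$ and any $(y,t)\in\Gamma(x)$ one has $|x-y|<t$, hence $t/(|x-y|+t)\ge 1/2$, which gives the pointwise bound
\begin{equation*}
S_\alpha^q(f)(x)\leq 2^{\lambda n/q}\,g_{\lambda,\alpha}^{q,*}(f)(x),
\end{equation*}
for every $f\in L_\mathbb{B}^p(\mathbb{R}^n)$. Thus (ii) implies the $L^p$-boundedness of $S_\alpha^q$, and Theorem \ref{thm4} yields that $\mathbb{B}$ is of Lusin cotype $q$.

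For $\textup{(i)}\Rightarrow\textup{(ii)}$ the plan is to decompose the $(y,t)$-region of integration according to the size of $|x-y|/t$. Set $A_0(x)=\Gamma(x)$ and $A_k(x)=\{(y,t):2^{k-1}t\le |x-y|<2^k t\}$ for $k\ge 1$. On $A_k(x)$ the weight satisfies $\bigl(t/(|x-y|+t)\bigr)^{\lambda n}\le 2^{(1-k)\lambda n}$, so if we denote by
\begin{equation*}
S_{\alpha,a}^q(f)(x)=\left(\iint_{|x-y|<at}\|t^\alpha\partial_t^\alpha\mathcal{P}_tf(y)\|_\mathbb{B}^q\,\frac{dy\,dt}{t^{n+1}}\right)^{1/q}
\end{equation*}
the $q$-area function of aperture $a$, then
\begin{equation*}
g_{\lambda,\alpha}^{q,*}(f)(x)^q\le C\sum_{k=0}^\infty 2^{-k\lambda n}\,S_{\alpha,2^k}^q(f)(x)^q.
\end{equation*}
Taking $L^{p/q}$-norms (note $p/q\ge 1$) gives
\begin{equation*}
\bigl\|g_{\lambda,\alpha}^{q,*}(f)\bigr\|_{L^p(\mathbb{R}^n)}^q\le C\sum_{k=0}^\infty 2^{-k\lambda n}\,\bigl\|S_{\alpha,2^k}^q(f)\bigr\|_{L^p(\mathbb{R}^n)}^q.
\end{equation*}

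The heart of the argument is then the aperture comparison
\begin{equation*}
\bigl\|S_{\alpha,a}^q(f)\bigr\|_{L^p(\mathbb{R}^n)}^q\le C\,a^n\,\bigl\|g_\alpha^q(f)\bigr\|_{L^p(\mathbb{R}^n)}^q,\qquad a\ge 1,\ p\ge q.
\end{equation*}
For $p=q$ this follows from a direct application of Fubini's theorem, since $\int_{|x-y|<at}dx=c_n(at)^n$. For $p>q$ we use duality in $L^{p/q}$: test against $\phi\ge 0$ with $\|\phi\|_{L^{(p/q)'}}=1$ and apply Fubini to produce
\begin{equation*}
\int_{\mathbb{R}^n}S_{\alpha,a}^q(f)(x)^q\phi(x)\,dx\le Ca^n\int_{\mathbb{R}^n}g_\alpha^q(f)(y)^q\,M\phi(y)\,dy,
\end{equation*}
after bounding $\int_{|x-y|<at}\phi(x)\,dx\le c_n(at)^n M\phi(y)$, where $M$ is the Hardy--Littlewood maximal operator. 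Since $(p/q)'<\infty$ for $p>q$, the $L^{(p/q)'}$-boundedness of $M$ together with H\"older's inequality finishes the bound.

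Combining the decomposition with the aperture estimate yields
\begin{equation*}
\bigl\|g_{\lambda,\alpha}^{q,*}(f)\bigr\|_{L^p(\mathbb{R}^n)}^q\le C\bigl\|g_\alpha^q(f)\bigr\|_{L^p(\mathbb{R}^n)}^q\sum_{k=0}^\infty 2^{kn(1-\lambda)},
\end{equation*}
and the series converges precisely because of the hypothesis $\lambda>1$. Finally, Theorem A applied with the Poisson semigroup on $\mathbb{R}^n$ delivers $\|g_\alpha^q(f)\|_{L^p(\mathbb{R}^n)}\le C\|f\|_{L_\mathbb{B}^p(\mathbb{R}^n)}$ from the Lusin cotype $q$ hypothesis. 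The main technical obstacle is the aperture-comparison estimate; everything else is bookkeeping. It is also here that one sees why the range $p\ge q$ (and not merely $p>1$) appears: the duality step needs $(p/q)'<\infty$ (or the direct Fubini computation at $p=q$), and letting $p<q$ would force the use of weak-type information on $M$ that is not sufficient to preserve the geometric series.
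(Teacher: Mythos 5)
Your argument for $\textup{(ii)}\Rightarrow\textup{(i)}$ is exactly the paper's. For $\textup{(i)}\Rightarrow\textup{(ii)}$ you reach the same key intermediate inequality as the paper, namely $\|g_{\lambda,\alpha}^{q,*}(f)\|_{L^p}\le C\|g_\alpha^q(f)\|_{L^p}$ for $q\le p$, after which Theorem A finishes, but you get there by a genuinely different and somewhat more hands-on route. The paper integrates against a nonnegative $h\in L^{(p/q)'}$ and observes in one stroke that
\begin{equation*}
\sup_{t>0}\ \frac{1}{t^n}\int_{\mathbb{R}^n}\Big(\frac{t}{t+|x-y|}\Big)^{\lambda n}h(x)\,dx\ \le\ CMh(y),
\end{equation*}
the integrability of $(1+|z|)^{-\lambda n}$ (i.e.\ $\lambda>1$) being exactly what makes the convolution kernel dominated by the maximal function. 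You instead perform the dyadic annular decomposition $\bigl(t/(t+|x-y|)\bigr)^{\lambda n}\lesssim \sum_k 2^{-k\lambda n}\chi_{\{|x-y|<2^kt\}}$, introduce area functions of aperture $2^k$, and prove the aperture-comparison estimate $\|S^q_{\alpha,a}(f)\|_{L^p}^q\lesssim a^n\|g_\alpha^q(f)\|_{L^p}^q$ using the elementary bound $\int_{|x-y|<at}\phi\,dx\le c_n(at)^nM\phi(y)$ plus duality in $L^{p/q}$; the hypothesis $\lambda>1$ then enters through the geometric series $\sum_k 2^{kn(1-\lambda)}$. These are two standard incarnations of the same idea (your annular decomposition is morally a dyadic discretization of the paper's kernel estimate, and both duality steps require $p\ge q$ for the same reason). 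Your version makes the dependence on $\lambda$ and on the aperture visible, which is a pedagogical gain, while the paper's is more compact. Both are correct.
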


\begin{proof}
 $\textup{(i)}\Rightarrow \textup{(ii)}.$ Since
$\lambda>1,$ the function $(1+|x|)^{-\lambda n}$ is integrable and
hence for good enough function $h(x)\geq 0,$  we have
\begin{equation}\label{02.20}    
\sup\limits_{t>0} \int_{\mathbb{R}^n}
\frac{1}{t^n}\Big(\frac{t}{t+|x-y|}\Big)^{\lambda n}h(y)dy\leq
CMh(x),
\end{equation}
where $Mh$ is the Hardy--Littlewood maximal function of $h$. By
\eqref{02.20} and  H\"{o}lder's inequality, we have
\begin{align*}
\int_{\mathbb{R}^n}\left(g_{\lambda, \alpha}^{q, *}(f)(x)\right)^q
h(x)dx
   &= \int_{\mathbb{R}^n}\int_{\mathbb{R}^n}\int_0^\infty
       \left\|t^\alpha\partial_t^{\alpha} \mathcal{P}_tf(y)\right\|_{\mathbb{B}}^q
       \Big(\frac{t}{t+|x-y|}\Big)^{\lambda
       n}\frac{dt}{t^{n+1}}dyh(x)dx\\
   &\leq C\int_{\mathbb{R}^n}\left(g_{\alpha}^{q}(f)(y)\right)^qMh(y)dy
   \leq C\left\|g_\alpha^{q}(f)\right\|_{L^p{\left(\mathbb{R}^n\right)}}^q
         \left\|Mh\right\|_{L^{\frac{p}{p-q}}\left(\mathbb{R}^n\right)}.
\end{align*}
Here, when $p=q$, let
$L^{\frac{p}{p-q}}\left(\mathbb{R}^n\right)=L^{\infty}\left(\mathbb{R}^n\right).$
Since $M$ is bounded on $L^r\left(\mathbb{R}^n\right) (1<r\leq
\infty)$, we get
\begin{equation*}
\int_{\mathbb{R}^n}\left(g_{\lambda,\alpha}^{q,*}(f)(x)\right)^q
          h(x)dx
   \leq C\left\|g_\alpha^{q}(f)\right\|_{L^p{\left(\mathbb{R}^n\right)}}^q
         \left\|h\right\|_{L^{\frac{p}{p-q}}\left(\mathbb{R}^n\right)}.
\end{equation*}
Taking supremum over all $h$ in
$L^{\frac{p}{p-q}}\left(\mathbb{R}^n\right),$ we get
\begin{equation}\label{02.21}
\Big\|g_{\lambda,\alpha}^{q,*}(f)\Big\|_{L^p
\left(\mathbb{R}^n\right)}\leq C\left\|g_\alpha^q(f)\right\|_{L^p
\left(\mathbb{R}^n\right)},  \quad  q \le p.
\end{equation}
Since $\mathbb{B}$ is of Lusin cotype $q$, by Theorem A  and
\eqref{02.21} we get $ \displaystyle \|g_{\lambda, \alpha}^{q,
*}(f)\|_{L^p(\mathbb{R}^n)}\leq
C\|f\|_{L_\mathbb{B}^p(\mathbb{R}^n)}.$

$\textup{(ii)}\Rightarrow \textup{(i)}.$ On the domain
$\Gamma(x)=\set{(y, t)\in \mathbb{R}_+^n: |y-x|<t}$, we have
$$\left(\frac{t}{|x-y|+t}\right)^{\lambda n}>\left(\frac{1}{2}\right)^{\lambda n}.$$
Hence
\begin{align}\label{3.8}
S_\alpha^q(f)(x)
&=\left(\iint_{\Gamma(x)}\left\|t^\alpha\partial_t^\alpha\mathcal
                 {P}_tf(y)\right\|_{\mathbb{B}}^q\frac{dydt}{t^{n+1}}\right)^{\frac{1}{q}}\nonumber\\
          &\le \left(\iint_{\Gamma(x)}2^{\lambda n}\left(\frac{t}{|x-y|+t}\right)^{\lambda n}
                  \|t^\alpha\partial_t^\alpha\mathcal{P}_tf(y)\|_{\mathbb{B}}^q\frac{dydt}
                  {t^{n+1}}\right)^{\frac{1}{q}}\nonumber\\
         &\le 2^{\frac{\lambda n}{q}}\left(\iint_{\mathbb{R}_+^{n+1}}\left(\frac{t}{|x-y|+t}\right)^{\lambda n}
                 \left\|t^\alpha\partial_t^\alpha\mathcal{P}_tf(y)\right\|_{\mathbb{B}}^q
                 \frac{dydt}{t^{n+1}}\right)^{\frac{1}{q}}\\
         &= 2^{\frac{\lambda n}{q}}g_\lambda^{q,*}(f)(x),\quad \forall x\in
         \mathbb{R}^n.\nonumber
\end{align}
Hence $\displaystyle
\left\|S_\alpha^q(f)\right\|_{L_{\mathbb{B}}^p(\mathbb{R}^n)}\leq
2^{\frac{\lambda n}{q}}\left\|g_{\lambda, \alpha}^{q,
*}(f)\right\|_{L_{\mathbb{B}}^p(\mathbb{R}^n)}\leq C\|f\|_{L_\mathbb{B}^p(\mathbb{R}^n)},
$ for any $f\in L_{\mathbb{B}}^p(\mathbb{R}^n).$ Then, by  Theorem
\ref{thm4}, $\mathbb{B}$ is of Lusin cotype $q.$
\end{proof}

\begin{thm}\label{thm9}  Given a Banach space $\mathbb{B}$, $1<q\leq 2$
 and $\lambda>1$, the following statements are
equivalent:
\begin{enumerate}
\item[\textup{(i)}] $\mathbb{B}$ is of Lusin type $q$.
\item[\textup{(ii)}] For every (or, equivalently, for some) positive integer
$n,$ for every (or, equivalently, for some) $p\in [q, \infty)$, and
for every (or, equivalently, for some) $\alpha>0,$ there is a
constant $C>0$ such that
$$\|f\|_{L_\mathbb{B}^p(\mathbb{R}^n)}\leq
C\left\|g_{\lambda, \alpha}^{q,*}(f)\right\|_{L^p(\mathbb{R}^n)},
\quad \forall f\in L_\mathbb{B}^p(\mathbb{R}^n).$$
\end{enumerate}
\end{thm}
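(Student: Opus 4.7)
The implication $(i)\Rightarrow(ii)$ will follow by concatenating two ingredients already in the paper. By the $(i)\Rightarrow(ii)$ step of Theorem \ref{thm5}, Lusin type $q$ of $\mathbb{B}$ yields $\|f\|_{L_\mathbb{B}^p(\mathbb{R}^n)} \le C\|S_\alpha^q(f)\|_{L^p(\mathbb{R}^n)}$ for every $1<p<\infty$ and every $\alpha>0$, while the pointwise estimate \eqref{3.8} gives $S_\alpha^q(f)(x) \le 2^{\lambda n/q}g_{\lambda,\alpha}^{q,*}(f)(x)$. Composing these two bounds produces (ii).

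For $(ii)\Rightarrow(i)$ the obvious pointwise comparison runs in the wrong direction (the $g_\lambda^*$-function dominates the area function), so I will argue by duality, mirroring the $(ii)\Rightarrow(i)$ step of Theorem \ref{thm5}. It suffices to show that $\mathbb{B}^*$ is of Lusin cotype $q'$; by Theorem \ref{thm4} applied to $\mathbb{B}^*$ and $q'$, this reduces to proving
\begin{equation*}
\|S_\alpha^{q'}(g)\|_{L^{p'}(\mathbb{R}^n)} \le C\|g\|_{L^{p'}_{\mathbb{B}^*}(\mathbb{R}^n)}
\end{equation*}
for some $p' \in (1,\infty)$, where $1/p+1/p'=1$ and $p\in[q,\infty)$ is the exponent for which (ii) is assumed (note $p>1$ since $q>1$, so $p'$ is well-defined). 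Using Bochner-space duality, represent the left-hand side as a supremum of pairings against norming $b\in L^p_{L^q_\mathbb{B}(\Gamma(0),\frac{dzdt}{t^{n+1}})}(\mathbb{R}^n)$, expand $\partial_t^\alpha \mathcal{P}_t g$ as a convolution, and invoke Fubini's theorem so that $g$ becomes paired with
\begin{equation*}
G(b)(\tilde z) = \int_{\Gamma(0)}\int_{\mathbb{R}^n} t^\alpha \partial_t^\alpha \mathcal{P}_t(y-z-\tilde z)\,b(y,z,t)\,dy\,\frac{dz\,dt}{t^{n+1}}.
\end{equation*}
H\"older's inequality together with hypothesis (ii) applied to $G(b)$ then bounds the whole expression by $C\|g\|_{L^{p'}_{\mathbb{B}^*}(\mathbb{R}^n)}\,\|g_{\lambda,\alpha}^{q,*}(G(b))\|_{L^p(\mathbb{R}^n)}$.

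The entire argument thus reduces to the estimate
\begin{equation*}
\|g_{\lambda,\alpha}^{q,*}(G(b))\|_{L^p(\mathbb{R}^n)} \le C\|b\|_{L^p_{L^q_\mathbb{B}(\Gamma(0),\frac{dzdt}{t^{n+1}})}(\mathbb{R}^n)},
\end{equation*}
which I regard as the principal obstacle. My plan is to view the composition $b \mapsto g_{\lambda,\alpha}^{q,*}(G(b))$ as a vector-valued convolution operator whose kernel takes values in $\mathcal{L}\bigl(L^q_\mathbb{B}(\Gamma(0),\tfrac{dzdt}{t^{n+1}}),\,L^q_\mathbb{B}(\mathbb{R}^{n+1}_+,(\tfrac{s}{|\cdot|+s})^{\lambda n}\tfrac{du\,ds}{s^{n+1}})\bigr)$. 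The key pointwise semigroup bound is $|s^\alpha t^\alpha \partial_u^{2\alpha}\mathcal{P}_u\big|_{u=s+t}(\xi)| \le C s^\alpha t^\alpha(s+t+|\xi|)^{-(n+2\alpha)}$, already exploited in the proof of Theorem \ref{thm5}; from it one verifies the standard size and gradient estimates for the kernel, the weight $(s/(|\cdot|+s))^{\lambda n}\le 1$ only helping. Remark \ref{CZ} then upgrades a single $L^{p_0}$-estimate to every $1<p<\infty$. Once this is in hand, the chain above produces the required bound for $\|S_\alpha^{q'}(g)\|_{L^{p'}}$, and Theorem \ref{thm4} yields Lusin cotype $q'$ of $\mathbb{B}^*$, equivalently Lusin type $q$ of $\mathbb{B}$.
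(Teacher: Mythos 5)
Your treatment of $(i)\Rightarrow(ii)$ is exactly the paper's argument: Theorem~\ref{thm5} plus the pointwise estimate~\eqref{3.8}.

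For $(ii)\Rightarrow(i)$, however, you overlooked a much shorter route that the paper uses. Your remark that ``the obvious pointwise comparison runs in the wrong direction'' is true for \emph{pointwise} comparisons, but the paper does not compare pointwise. Inequality~\eqref{02.21}, derived in the proof of Theorem~\ref{thm7} via the maximal-function dualization~\eqref{02.20}, reads
$\big\|g_{\lambda,\alpha}^{q,*}(f)\big\|_{L^p(\mathbb{R}^n)}\le C\big\|g_\alpha^q(f)\big\|_{L^p(\mathbb{R}^n)}$ for $p\ge q$, and this holds for \emph{every} Banach space $\mathbb{B}$ — it uses only Fubini, H\"older and the $L^{p/(p-q)}$-boundedness of the Hardy--Littlewood maximal operator. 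Composing this with hypothesis~(ii) gives $\|f\|_{L_\mathbb{B}^p}\le C\|g_\alpha^q(f)\|_{L^p}$, and Theorem~B then yields Lusin type $q$ immediately. Note that this also explains why the statement restricts to $p\in[q,\infty)$: that is exactly the range in which~\eqref{02.21} is available.

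Your alternative duality argument, besides being far longer, has a genuine gap at the step you yourself flag as ``the principal obstacle.'' The claim that the kernel of $b\mapsto g_{\lambda,\alpha}^{q,*}(G(b))$ — with target $L^q_\mathbb{B}\big(\mathbb{R}^{n+1}_+,(\tfrac{s}{|u|+s})^{\lambda n}\tfrac{du\,ds}{s^{n+1}}\big)$ — satisfies standard size and gradient estimates is not verified and is doubtful, and the assertion that the weight $\le 1$ ``only helps'' is misleading. Compared with the cone operator $\mathcal{U}$ of Theorem~\ref{thm5}, you have replaced the bounded $u$-range $\{|u|<s\}$ by all of $\mathbb{R}^n$, and the weight $(\tfrac{s}{|u|+s})^{\lambda n}$ is precisely what is needed just to make that new $u$-integration converge; whether the remaining decay is compatible with $\|K(\xi)\|\lesssim|\xi|^{-n}$ is a separate and non-trivial matter. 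Running the natural H\"older bound $\|K(\xi)a(u,s)\|_\mathbb{B}\lesssim s^\alpha(s+|\xi+u|)^{-(n+\alpha)}\|a\|$ through the weighted $L^q$ norm produces, near $u=0$ and $s\to0$, a contribution proportional to $|\xi|^{-\lambda n}\int_0 s^{\lambda n-nq-1}\,ds$, which diverges whenever $\lambda\le q$ — and the theorem allows any $\lambda>1$ with $q$ up to $2$. So the bookkeeping you outline does not close, and closing it would require exploiting cancellation in the kernel (the mean-zero structure of $\partial_w^{2\alpha}\mathcal{P}_w$) rather than its size. In short: the statement is true, your $(i)\Rightarrow(ii)$ is right, but your $(ii)\Rightarrow(i)$ rests on an unproved Calder\'on--Zygmund lemma that is at best delicate, whereas the paper's two-line deduction from~\eqref{02.21} and Theorem~B avoids the issue entirely.
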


\begin{proof}
$\textup{(i)}\Rightarrow \textup{(ii)}.$ Since $\mathbb{B}$ is of
Lusin type $q$, by Theorem \ref{thm5} and \eqref{3.8} we get
 $$
\|f\|_{L_\mathbb{B}^p(\mathbb{R}^n)} \leq
C\left\|S_{\alpha}^q(f)\right\|_{L^p(\mathbb{R}^n)} \leq
C\left\|g_{\lambda, \alpha}^{q,*}(f)\right\|_{L^p(\mathbb{R}^n)},
\quad \forall f\in L_\mathbb{B}^p(\mathbb{R}^n).
$$
$\textup{(ii)}\Rightarrow \textup{(i)}.$ By \eqref{02.21}, we get
\begin{eqnarray*}
\|f\|_{L_\mathbb{B}^p(\mathbb{R}^n)}
   \leq C \left\|g_{\lambda,\alpha}^{q, *}(f)\right\|_{L^p(\mathbb{R}^n)}
   \leq C \left\|g_\alpha^q(f)\right\|_{L^p
\left(\mathbb{R}^n\right)}, \quad \forall f\in
L_\mathbb{B}^p(\mathbb{R}^n).
\end{eqnarray*}
Then by Theorem B, $\mathbb{B}$ is of Lusin type $q$.
\end{proof}

\section{Proof of Theorem C} \label{A.E.}

\begin{proof}[Proof of Theorem C]
By Theorem A, Theorem \ref{thm3}, Theorem \ref{thm4} and Theorem
\ref{thm7}, we have (i) $\Rightarrow$ (ii), (i) $\Rightarrow$ (iii)
and (i) $\Rightarrow$ (iv). \\
Let us prove the converse. $\textup{(ii)}\Rightarrow \textup{(i)}.$
Let $p_0\in(1, \infty).$ Observe that
\begin{multline*}
g_\alpha^q(f)(x)
   =\left(\int_0^\infty \left\|t^\alpha\partial_t^\alpha
       \mathcal{P}_tf(x)\right\|_{\mathbb{B}}^q\frac{dt}{t}\right)^{\frac{1}{q}}\\
         = \sup\limits_{j\in \mathbb{Z}^+}\left(\int_{\frac{1}{j}}^{j}
        \left\|t^\alpha\partial_t^\alpha
       \mathcal{P}_tf(x)\right\|_{\mathbb{B}}^q\frac{dt}{t}\right)^{\frac{1}{q}}
   =\sup\limits_{j\in \mathbb{Z}^+}\left\|T^{j}(f)(x,t)\right\|_{L_{\mathbb{B}}
      ^q(\mathbb{R}_+, \frac{dt}{t})},
\end{multline*}
where $T^{j}(f)(x,t)=t^\alpha\partial_t^\alpha
\mathcal{P}_tf(x)\chi_{\{\frac{1}{j}<t<j\}}$ is the operator which
sends $\mathbb{B}$-valued functions to
$L_{\mathbb{B}}^q(\mathbb{R}_+, \frac{dt}{t})$-valued functions. It
is clear that $T^{j}$ is bounded from
$L_\mathbb{B}^{p_0}(\mathbb{R}^n)$ to
$L_{L_{\mathbb{B}}^q(\mathbb{R}_+,  
\frac{dt}{t})}^{p_0}(\mathbb{R}^n).$  Let
$T_N^{j}(f)(x)=T^{j}(f)(x)\chi_{B_N}(x),$ where $B_N=B(0, N)$ is the
ball in $\mathbb{R}^n,$ for any $N>0.$ So $T_N^{j}$ is bounded from
$L_\mathbb{B}^{p_0}(\mathbb{R}^n)$ to
$L_{L_{\mathbb{B}}^q(\mathbb{R}_+, \frac{dt}{t})}^{p_0}(B_N).$ Then
we have
\begin{multline}\label{6.1}
\Big|\Big\{x\in B_N:
\left\|T_N^{j}(f)(x)\right\|_{L_{\mathbb{B}}^q(\mathbb{R}_+,
\frac{dt}{t})}>\lambda
\|f\|_{L_\mathbb{B}^{p_0}(\mathbb{R}^n)}\Big\}\Big|\\
 \leq \frac{1}{\lambda^{p_0}\|f\|_{L_\mathbb{B}^{p_0}(\mathbb{R}^n)}^{p_0}}
         \int_{B_N}\left\|T_N^{j}(f)(x)\right\|_{L_{\mathbb{B}}^q(\mathbb{R}_+,
\frac{dt}{t})}^{p_0}dx
 \leq \frac{C}{\lambda^{p_0}}.
\end{multline}

Let $\displaystyle \mathcal{M}=\Big\{f: f\ \textup{is
${L_{\mathbb{B}}^q(\mathbb{R}_+, \frac{dt}{t})}$-valued and strong
measurable on} \ B_N\Big\}.$ In the finite measurable space,
$\left(B_N, \mathcal{M}\right)$, we introduce the following topology
basis. For any $\varepsilon>0,$ let \newline $$\displaystyle V_{B_N,
\varepsilon}=\Big\{f\in \mathcal{M}: \Big|\Big\{x\in B_N:
\Big\|f(x)\Big\|_{L_{\mathbb{B}}^q(\mathbb{R}_+,
\frac{dt}{t})}>\varepsilon\Big\}\Big|<\varepsilon\Big\}.$$ We denote
the topology space  on $B_N$ by $L^0_{L_{\mathbb{B}}^q(\mathbb{R}_+,
\frac{dt}{t})}(B_N)$. By \eqref{6.1}, we have $$\displaystyle
\lim_{\lambda \rightarrow \infty}\Big|\Big\{x\in B_N:
\Big\|T_N^{j}(f)(x)\Big\|_{L_{\mathbb{B}}^q(\mathbb{R}_+,
\frac{dt}{t})} >\lambda
\|f\|_{L_\mathbb{B}^{p_0}(\mathbb{R}^n)}\Big\}\Big|=0.$$ So for any
$\varepsilon>0,$ there exists $\lambda_\varepsilon>0$ such that
$$\Big|\Big\{x\in B_N: \left\|T_N^{j}(f)(x)\right\|_{L_{\mathbb{B}}^q(\mathbb{R}_+,
\frac{dt}{t})}>\lambda
\|f\|_{L_\mathbb{B}^{p_0}(\mathbb{R}^n)}\Big\}\Big|<\varepsilon,
\quad \lambda\ge\lambda_\varepsilon.$$ Then for $\varepsilon$ given
above, there exists a constant $\displaystyle
\delta_\varepsilon=\frac{\varepsilon}{\lambda_\varepsilon}$, such
that for any $\displaystyle
\norm{f}_{L_\mathbb{B}^{p_0}(\mathbb{R}^n)}<\delta_\varepsilon$ we
have
\begin{equation*}
\Big|\Big\{x\in B_N:
\Big\|T_N^{j}(f)(x)\Big\|_{L_{\mathbb{B}}^q(\mathbb{R}_+,
\frac{dt}{t})}> \varepsilon\Big\}\Big|\le\Big|\Big\{x\in B_N:
\Big\|T_N^{j}(f)(x)\Big\|_{L_{\mathbb{B}}^q(\mathbb{R}_+,
\frac{dt}{t})}> \lambda_\varepsilon
\|f\|_{L_\mathbb{B}^{p_0}(\mathbb{R}^n)}\Big\}\Big|<\varepsilon.
\end{equation*}
This means that $\displaystyle T_N^j(f)\in V_{B_N, \varepsilon}$ for
any $f\in L_\mathbb{B}^{p_0}(\mathbb{R}^n)$ with $\displaystyle
\norm{f}_{L_\mathbb{B}^{p_0}(\mathbb{R}^n)}<\delta_\varepsilon.$
Hence $T_N^{j}$ is continuous from $\displaystyle
L_\mathbb{B}^{p_0}(\mathbb{R}^n)$ to $\displaystyle
L_{L_{\mathbb{B}}^q(\mathbb{R}_+, \frac{dt}{t})}^0(B_N).$ Let
$\displaystyle U_N=\left\{T_N^j(f)\right\}_{j=1}^\infty.$ Since
$g_\alpha^q(f)(x) < \infty $ a.e.,  $U_N$ is a well defined linear
operator from $\displaystyle L_\mathbb{B}^{p_0}(\mathbb{R}^n)$ to
$L_{\ell^\infty({L_{\mathbb{B}}^q(\mathbb{R}_+,
\frac{dt}{t})})}^0(B_N)$.  As $B_N$ has finite measure, the space
$L_{\ell^\infty({L_{\mathbb{B}}^q(\mathbb{R}_+,
\frac{dt}{t})})}^0(B_N)$ is metrizable and  complete. Then by the
closed graph theorem, the operator $U_N$ is continuous. As
$\displaystyle g_{\alpha,
N}^q(f)(x)=\left\|T_N^j(f)(x)\right\|_{\ell^\infty({L_{\mathbb{B}}^q(\mathbb{R}_+,
\frac{dt}{t})})}$, we get that $g_{\alpha, N}^q$ is continuous from
$\displaystyle L_\mathbb{B}^{p_0}(\mathbb{R}^n)$ to $L^0(B_N).$
Therefore  for any $\varepsilon>0,$ there exists
$\delta_\varepsilon>0$ such that
$$\abs{\left\{x\in B_N: \abs{g_\alpha^q(h)(x) }>\varepsilon\right\}}<\varepsilon, \,
\,\hbox{ for } \,  \norm{h}_{L_\mathbb{B}^{p_0}(\mathbb{R}^n)}<\delta_\varepsilon.$$
In particular, for any $0<r<\varepsilon,$ there exists $\delta_r>0$
such that
$$\abs{\left\{x\in
B_N:\abs{g_\alpha^q(h)}>r\right\}}<\varepsilon, \, \, \hbox{ for }
\norm{h}_{L_\mathbb{B}^{p_0}(\mathbb{R}^n)}<\delta_r.$$  Now let $g$
be an element of $L_\mathbb{B}^{p_0}(\mathbb{R}^n)$ with
$\norm{g}_{L_\mathbb{B}^{p_0}(\mathbb{R}^n)}\neq 0$ and
$\displaystyle
h=\frac{g}{\norm{g}_{L_\mathbb{B}^{p_0}(\mathbb{R}^n)}}\frac{\delta_r}{2}.$
Then we have $\displaystyle
\norm{h}_{L_\mathbb{B}^{p_0}(\mathbb{R}^n)}<\frac{\delta_r}{2}$ and
\begin{align*} \varepsilon>\abs{\big\{x\in
B_N:\abs{g_\alpha^q(h)}>r\big\}}>\abs{\Big\{x\in B_N:
\abs{g_\alpha^q(h)}>\varepsilon\Big\}}=\Big|\Big\{x\in B_N:
\abs{g_\alpha^q(g)}>\frac{2\varepsilon
\norm{g}_{L_\mathbb{B}^{p_0}(\mathbb{R}^n)}}{\delta_r}\Big\}\Big|.
\end{align*}
Let $\displaystyle \mu_\varepsilon =\frac{2\varepsilon}{\delta_r}.$
Then when $\mu\ge \mu_\varepsilon$, we have
\begin{align}\label{cont} \abs{\left\{x\in B_N:
\abs{g_\alpha^q(g)}>\mu\norm{g}_{L_\mathbb{B}^{p_0}(\mathbb{R}^n)}\right\}}
\le \Big|\Big\{x\in B_N: \abs{g_\alpha^q(g)}>\frac{2\varepsilon
\norm{g}_{L_\mathbb{B}^{p_0}(\mathbb{R}^n)}}{\delta_r}\Big\}\Big|<\varepsilon.
\end{align}

Let $\displaystyle f\in L_\mathbb{B}^1(\mathbb{R}^n)$ and
$\lambda>0,$ we perform the Calder\'{o}n--Zygmund decomposition as
the sum $f=g+b$ such that $\displaystyle
\left\|g\right\|_{L_\mathbb{B}^1(\mathbb{R}^n)}\leq
\left\|f\right\|_{L_\mathbb{B}^1(\mathbb{R}^n)}$ and
$\left\|g\right\|_{L_\mathbb{B}^\infty(\mathbb{R}^n)}\leq 2\lambda$.
Then we have
\begin{equation}\label{04.1}
\left\|g\right\|_{L_\mathbb{B}^{p_0}(\mathbb{R}^n)}\leq
(2\lambda)^{\frac{p_0-1}{p_0}}\left\|f\right\|_{L_\mathbb{B}^1(\mathbb{R}^n)}^{\frac{1}{p_0}}
\end{equation}
and
\begin{equation}\label{04.2}
\Big|\Big\{x\in \mathbb{R}^n: \big|g_{\alpha}^q(b)(x)\big|
>\frac{\lambda}{2}\Big\}\Big|\leq
\frac{C}{\lambda}\left\|f\right\|_{L_\mathbb{B}^1(\mathbb{R}^n)}.
\end{equation}
Indeed, \eqref{04.1} is trivial from the estimates of $g.$ For
\eqref{04.2},  we observe that by Proposition \ref{prop5},
$g_\alpha^q(f)$ can be expressed as an
$L^q_\mathbb{B}(\mathbb{R}_+,\frac{dt}{t})$-norm of a
Calder\'{o}n--Zygmund operator with a regular kernel. In these
circumstances, it can be observed that the boundedness of the
measure of the set appearing in \eqref{04.2} depends only on the
kernel of the operator and not on the boundedness of the operator,
see \cite{duo}. Therefore, by \eqref{04.1} and \eqref{04.2}, we have
\begin{align*}
&\Big|\Big\{x\in B_N:
\Big|g_{\alpha}^q(f)(x)\Big|>\lambda\Big\}\Big|
   \leq\Big|\Big\{x\in B_N: \Big|g_{\alpha}^q(g)(x)\Big|
           >\frac{\lambda}{2}\Big\}\Big|+\Big|\Big\{x\in \mathbb{R}^n:
           \Big|g_{\alpha}^q(b)(x)\Big|>\frac{\lambda}{2}\Big\}\Big|\nonumber\\
   &=\Big|\Big\{x\in B_N: \Big|g_{\alpha}^q(g)(x)\Big|
           >\frac{\lambda}{2\left\|g\right\|_{L_\mathbb{B}^{p_0}(\mathbb{R}^n)}}\left\|
           g\right\|_{L_\mathbb{B}^{p_0}(\mathbb{R}^n)}\Big\}\Big| \nonumber\\
   &\quad     +\Big|\Big\{x\in \mathbb{R}^n:
           \left|g_{\alpha}^q(b)(x)\right|>\frac{\lambda}{2}\Big\}\Big|\\
   &\leq \Big|\Big\{x\in B_N: \left|g_{\alpha}^q(g)(x)\right|
           >\frac{\lambda^{\frac{1}{{p_0}}}}{2^{2-\frac{1}{p_0}}\left\|f\right\|_{L_\mathbb{B}^1(\mathbb{R}^n)}^{\frac{1}{p_0}}}\left\|
           g\right\|_{L_\mathbb{B}^{p_0}(\mathbb{R}^n)}\Big\}\Big|
        +\frac{C}{\lambda}\left\|f\right\|_{L_\mathbb{B}^1(\mathbb{R}^n)}\nonumber \\
   &= \Big|\Big\{x\in B_N: \Big|g_{\alpha,N}^q(g)(x)\Big|
           >\frac{\lambda^{\frac{1}{{p_0}}}}{2^{2-\frac{1}{p_0}}\left\|f\right\|_{L_\mathbb{B}^1(\mathbb{R}^n)}^{\frac{1}{{p_0}}}}\left\|
           g\right\|_{L_\mathbb{B}^{p_0}(\mathbb{R}^n)}\Big\}\Big|
        +\frac{C}{\lambda}\left\|f\right\|_{L_\mathbb{B}^1(\mathbb{R}^n)}.\nonumber
\end{align*}
Now, given $\varepsilon >0$ we perform the Calder\'on-Zygmund
decomposition with $\lambda$ such that  $\displaystyle
\lambda^{\frac{1}{p_0}}>2^{2-\frac{1}{p_0}}\left\|f\right\|_{L_\mathbb{B}^1(\mathbb{R}^n)}^{\frac{1}{{p_0}}}
\mu_\varepsilon$. Then, by \eqref{cont}, we have
\begin{align*}
\left|\left\{x\in B_N:
\left|g_{\alpha}^q(f)(x)\right|>\lambda\right\}\right|
   &\leq\left|\left\{x\in B_N: \left|g_{\alpha}^q(g)(x)\right|
           >\mu_\varepsilon \|g\|_{ L_\mathbb{B}^{p_0}(\mathbb{R}^n)}
            \right\}\right|+\frac{C}{\lambda}\left\|f\right\|_{L_\mathbb{B}^1(\mathbb{R}^n)}\nonumber\\
   &\le \varepsilon
        +\frac{C}{\lambda}\left\|f\right\|_{L_\mathbb{B}^1(\mathbb{R}^n)}.\nonumber
\end{align*}
This clearly implies $g_\alpha^q(f)(x)<\infty$ a.e. $x\in
\mathbb{R}^n,$ for any $f\in L_\mathbb{B}^1(\mathbb{R}^n).$ We apply
Theorem \ref{tipoa.e.} and  get the result.

To prove that $\textup{(iii)}\Rightarrow \textup{(i)}$, we can use
the same argument as above but with a very small modification. We
only need note that
\begin{multline*} S_\alpha^q(f)(x)
   =\left(\iint_{\Gamma(x)} \left\|t^\alpha\partial_t^\alpha
       \mathcal{P}_tf(y)\right\|_{\mathbb{B}}^q\frac{dydt}{t^{n+1}}\right)^{\frac{1}{q}}
   =\left(\int_0^\infty\int_{\abs{y-x}<t} \left\|t^\alpha\partial_t^\alpha
       \mathcal{P}_tf(y)\right\|_{\mathbb{B}}^q\frac{dy}{t^n}\frac{dt}{t}\right)^{\frac{1}{q}}\\
    = \sup_{j\in \mathbb{Z}^+}\left(\int_{\frac{1}{j}}^{j}\int_{\abs{y-x}<t}
        \left\|t^\alpha\partial_t^\alpha
       \mathcal{P}_tf(y)\right\|_{\mathbb{B}}^q\frac{dy}{t^n}\frac{dt}{t}\right)^{\frac{1}{q}}
   =\sup_{j\in \mathbb{Z}^+}\left\|T^{j}(f)(x,t)\right\|_{L_{\mathbb{B}}
      ^q(\mathbb{R}_+, \frac{dt}{t})},
\end{multline*}
where $\displaystyle
T^{j}(f)(x,t)=\int_{\abs{y-x}<t}\norm{t^\alpha\partial_t^\alpha
\mathcal{P}_tf(y)}_{\mathbb{B}}^q\frac{dy}{t^n}\chi_{\{\frac{1}{j}<t<j\}}$
is the operator which sends $\mathbb{B}$-valued functions to
$L^q(\mathbb{R}_+, \frac{dt}{t})$-valued functions. And $T^{j}$ is
bounded from $L_\mathbb{B}^{p_0}(\mathbb{R}^n)$ to
$L_{L^q(\mathbb{R}_+,  
\frac{dt}{t})}^{p_0}(\mathbb{R}^n),\ 1<p_0<\infty$ also. Now we can
continue the proof as in the case of $g_\alpha^q.$

$\textup{(iv)}\Rightarrow \textup{(i)}.$ Assuming that
$\displaystyle g_{\lambda, \alpha}^{q, *}(f)(x)<\infty$
a.e.~$x\in\mathbb{R}^n,$ by \eqref{3.8} we know that $\displaystyle
S_{\alpha}^{q}(f)(x)\leq Cg_{\lambda, \alpha}^{q, *}(f)(x)<\infty$
a.e.~$x\in\mathbb{R}^n.$ Then by $\textup{(iii)}\Rightarrow
\textup{(i)}$,  $\mathbb{B}$ is of Lusin cotype $q$.
\end{proof}

\section{UMD Spaces}\label{UMD}

Now we give the proof of Theorem D.  Clearly it is enough to prove
$\textup{(ii)}\Rightarrow \textup{(i)}$. Let  $1<p_0<\infty$ and
assume that $\displaystyle \lim_{\varepsilon \rightarrow 0^+
}\int_{|x-y|> \varepsilon} \frac{f(y)}{x-y}dy $ exists a.e.~$x\in
\mathbb{R}$ for any $f\in L^{p_0}_\mathbb{B}(\mathbb{R}).$ Then the
maximal operator
 $\displaystyle H^*f(x) = \sup_{\varepsilon>0} \Big\|\int_{|x-y|> \varepsilon} \frac{f(y)}{x-y}dy\Big\|_{\mathbb{B}}$
is finite a.e. $x\in \mathbb{R}$. Our idea is to apply the method
developed in the proof of
   $(ii) \Rightarrow (i)$ of Theorem C. However, we cannot apply it directly since $ H^*$ can't be expressed
    as a  norm of a Calder\'on--Zygmund operator with a regular  kernel.  Let $\varphi$ be a smooth function
     such that $\chi_{[\frac{3}{2},\infty)} \le \varphi \le  \chi_{[\frac{1}{2},\infty)}$. Consider the operator
      $\displaystyle H_\varphi^*f(x)  = \sup_{\varepsilon>0}  \Big\|\int_{\mathbb{R }}
      \varphi\Big(\frac{|x-y|}{\varepsilon}\Big) f(y) dy\Big\|_{\mathbb{B}}$.  It can be easily checked that
\begin{equation} \label{equiv}
\abs{ H_\varphi^*f(x) -  H^*f(x)} \le C M(\norm{f}_{\mathbb{B}})(x)
,\quad \hbox{a.e.}\  x \in \mathbb{R},
\end{equation}
where $M$ denotes the Hardy--Littlewood maximal function. Therefore,
the operator $H_\varphi^*f(x) < \infty,\ \hbox{a.e.}\ x \in
\mathbb{R}.$ Observe that this operator can be expressed  as
$$ H_\varphi^*f(x) =  \Big\| \Big\{  \int_{\mathbb{R }} \varphi
\Big(\frac{|x-y|}{\varepsilon}\Big) f(y) dy \Big\}_\varepsilon
\Big\|_{\ell^\infty_{\mathbb{B}}}. $$ It is well known that the last
operator  can be viewed as  the $\ell^\infty_{\mathbb{B}}$-norm
 of a Calder\'on--Zygmund operator with regular kernel. Now we are in the situation of
 the proof of part $\textup{(ii)}\Rightarrow \textup{(i)}$
of Theorem C and with some obvious changes we get
$$\lim\limits_{\lambda\rightarrow
\infty}\left|\left\{x\in B_N:
\left|H^*_\varphi(f)(x)\right|>\lambda\right\}\right|=0, \quad
\forall f\in L^1_{\mathbb{B}}(\mathbb{R}),\ N>0.$$ In particular,
this implies that the operator $H^*_\varphi$  maps
$L^1_{\mathbb{B}}(\mathbb{R}) $ into $L^0(\mathbb{R})$. By
(\ref{equiv}) and the fact that $M$ maps
$L^1_{\mathbb{B}}(\mathbb{R})$ into weak-$L^1(\mathbb{R})$ for every
Banach space ${\mathbb{B}}$,  $H^*$ maps
$L^1_{\mathbb{B}}(\mathbb{R}) $ into $L^0(\mathbb{R})$. Now we can
apply the following lemma.

\begin{lem}\label{7.3}{\cite[Lemma~7.3]{MTX}}
Let $\mathbb{B}$ be a Banach space. Then every translation and
dilation invariant continuous sublinear operator $T:
L^1_\mathbb{B}(\mathbb{R}^n) \rightarrow L^0(\mathbb{R}^n) $ is of
weak type $(1,1)$.
\end{lem}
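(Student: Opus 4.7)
This is the Banach-valued form of the Nikishin--Stein continuity principle: a sublinear operator that is continuous from $L^1$ into $L^0$ and is invariant under both translations and dilations is automatically weak-$(1,1)$. I would proceed in three steps that sequentially extract quantitative information from the three structural hypotheses.

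\emph{Step 1 (quantitative continuity at a fixed scale).} Continuity of $T:L^1_{\mathbb{B}}(\mathbb{R}^n)\to L^0(\mathbb{R}^n)$ at the origin translates, by the standard topology of convergence in measure, into the existence of constants $A,\delta>0$ such that
\[
\|f\|_{L^1_{\mathbb{B}}(\mathbb{R}^n)}\le\delta\ \Longrightarrow\ |\{x\in\mathbb{R}^n:\|Tf(x)\|_{\mathbb{B}}>A\}|\le 1.
\]
This is the only quantitative input furnished by the continuity hypothesis, and it is anchored at a single scale.

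\emph{Step 2 (dilation scaling).} The assumed dilation invariance, $T(f(\cdot/t))=(Tf)(\cdot/t)$ up to the scalar factor dictated by the precise form of the invariance, together with the change of variables $x\mapsto tx$, scales the Step-1 inequality from the particular level $A$ to every level $\lambda>0$. This yields a one-parameter family of inequalities relating $\|f\|_1$, the level $\lambda$, and $|\{\|Tf\|>\lambda\}|$, but with a dimensionally determined exponent that is, at this stage, not yet the sought $\|f\|_1/\lambda$ dependence.

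\emph{Step 3 (translation averaging).} Translation invariance is used, by a contradiction-plus-averaging argument, to upgrade Step 2 to the correct linear dependence on $\|f\|_1$. Suppose the weak-$(1,1)$ bound failed; then for some $\lambda$ there would exist $f\in L^1_{\mathbb{B}}$ with $\|f\|_1$ arbitrarily small yet $|\{\|Tf\|>\lambda\}|$ not correspondingly small. Using $T(\tau_{a}f)=\tau_{a}Tf$, one superposes $N$ widely separated translates of $f$ to produce a single function $F=\sum_{k=1}^N\tau_{a_k}f$ whose $L^1_{\mathbb{B}}$-norm is $N\|f\|_1$ but whose large-level set (by choosing the $a_k$ so that the $N$ translates of the bad level set of $f$ are essentially disjoint and lie in a common bounded reference region) has measure at least $N$ times that of a single summand. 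After rescaling by dilation and fitting the numerical parameters, this violates Step 1, contradicting the continuity of $T$.

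\textbf{Main obstacle.} The delicate point is Step 3: sublinearity $\|T(f+g)\|\le\|Tf\|+\|Tg\|$ provides only an upper bound on $\|TF\|$, not a lower bound, so one cannot directly read off a large level set of $\|TF\|$ from large level sets of the summands. The standard workaround is a maximal-translate or random-translate construction combined with Fubini, which guarantees that a definite positive fraction of the bad level set of each summand survives in $TF$. In the present Banach-valued setting these manipulations involve only the pointwise scalar $\|Tf(x)\|_{\mathbb{B}}$, so the classical scalar proof (as in Stein's monographs) carries over without essential change.
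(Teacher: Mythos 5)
The paper itself does not prove this lemma: it is cited verbatim from \cite[Lemma~7.3]{MTX}, and no argument is given here, so there is no in--paper proof to compare your proposal against. Judged on its own, your outline correctly identifies the result as a Nikishin--Stein continuity principle, but the way the three hypotheses are deployed contains a genuine confusion about where the real difficulty lies.

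Your Step~1 asserts a \emph{global} bound: $\|f\|_{L^1_{\mathbb B}}\le\delta\Rightarrow|\{x\in\mathbb R^n:\|Tf(x)\|_{\mathbb B}>A\}|\le 1$. On $\mathbb R^n$ with its infinite measure, continuity into $L^0$ (the topology of local convergence in measure, exactly as used in the proof of Theorem~C on the balls $B_N$) yields only a \emph{local} statement: the measure of the superlevel set is controlled inside a fixed compact set, say $B(0,1)$, and no more. This is not a cosmetic point. If your global Step~1 were available, then Step~2 alone would already finish the proof: applying the bound to the dilates $\delta_t f=f(\cdot/t)$, whose $L^1$ norm is $t^n\|f\|_1$ and whose level set is the dilate of that of $f$, gives $\|f\|_1\le s\Rightarrow|\{\|Tf\|>A\}|\le s/\delta$ upon choosing $t^n=\delta/s$; positive homogeneity of $T$ (part of sublinearity) then converts the fixed level $A$ into an arbitrary level $\lambda$ with the constant $A\delta^{-1}$, i.e.\ the full weak-$(1,1)$ bound, and Step~3 would be superfluous. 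The fact that you need a further step is a symptom that Step~1 is claiming too much.

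Relatedly, Step~2 as written is misdescribed: the change $T(f(\cdot/t))=(Tf)(\cdot/t)$ rescales the spatial variable and the $L^1$ norm, not the level $A$; it is homogeneity in the scalar multiple, not dilation of the variable, that moves the level. And the ``one-parameter family with a dimensionally determined exponent'' that ``is not yet the sought $\|f\|_1/\lambda$ dependence'' does not appear: the dilation/homogeneity bookkeeping, done from a global Step~1, produces the sharp dependence directly. The actual hard content of the lemma---and the place where translation invariance and the Stein/Nikishin averaging over random translates must earn their keep---is exactly the passage from the local bound that continuity really gives to the global bound your Step~1 postulates. Your Step~3 contains the right ingredients (widely separated translates, a Fubini or random-translate argument to circumvent the one-sidedness of sublinearity), but it is aimed at the wrong target (``upgrading the exponent'') rather than at the local-to-global issue, and the crucial mechanism by which a definite fraction of each translated bad set survives in $TF$ is asserted rather than supplied. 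Until those two points are repaired, the outline does not yet constitute a proof.
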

Then we get $H^*:L^1_{\mathbb{B}}(\mathbb{R}) \rightarrow
\hbox{weak-}L^1(\mathbb{R})$ which implies that the Banach space
${\mathbb{B}}$ is UMD.  This ends the proof of Theorem D.

\begin{rem} The above thoughts can be apply to the following general
situation.\\ Given two Banach spaces $\mathbb{B}_1$, $\mathbb{B}_2$
and $1\leq p<\infty,$ let $K(x,y) \in
\mathcal{L}(\mathbb{B}_1,\mathbb{B}_2)$ be a regular
Calder\'{o}n--Zygmund  kernel. Define $\displaystyle T_\varepsilon
f(x) = \int_{|x-y| >\varepsilon} K(x,y) f(y) dy$  and
$$Sf(x)=  \lim_{\varepsilon\rightarrow 0^+}T_\varepsilon f(x), \quad x\in \mathbb{R}^n.$$
Then the following statements are equivalent:
\begin{itemize}
\item For any $p\in (1,\infty),$ the operator $S$ maps $L^p_{\mathbb{B}_1}(\mathbb{R}^n)$ into $L^p_{\mathbb{B}_2}(\mathbb{R}^n). $
\item For any (or, equivalently, for some) $p\in (1, \infty),$ the maximal operator   $\displaystyle S^*f(x) =
\sup_{\varepsilon >0} \|T_{\varepsilon}f(x)\|_{\mathbb{B}_2} <
\infty, \, \hbox{ a.e. } x \in \mathbb{R}^n$ for every $f\in
L^p_{\mathbb{B}_1}(\mathbb{R}^n)$.
\end{itemize}

\noindent In other words, the following statement

 \centerline{``There exists a number
$p_0 \in [1, \infty) $ such that $\norm{Tf(x)}_{\mathbb{B}_2} <
\infty $ a.e $x\in \mathbb{R}^n,$ for every $f \in
L^{p_0}_{\mathbb{B}_1}(\mathbb{R}^n).$''} \noindent could be added
to the list of those statements in Remark \ref{CZ}, after an
appropriated description of $T$.
\end{rem}



\end{document}